\newtheorem{theorem}{Theorem}[section]
\newtheorem{proposition}[theorem]{Proposition}
\newtheorem{corollary}[theorem]{Corollary}
\newtheorem{lemma}[theorem]{Lemma}
\theoremstyle{definition}
\newtheorem{definition}[theorem]{Definition}
\newtheorem{question}[theorem]{Question}
\newtheorem{remark}[theorem]{Remark}
\newcommand{\PP}{\mathbb{P}}
\newcommand{\QQ}{\mathbb{Q}}
\newcommand{\CC}{\mathbb{C}}
\newcommand{\RR}{\mathbb{R}}
\newcommand{\ZZ}{\mathbb{Z}}
\newcommand{\NN}{\mathbb{N}}
\newcommand{\cE}{\mathcal{E} }
\newcommand{\cM}{\mathcal{M} }
\newcommand{\cO}{\mathcal{O} }
\newcommand{\cP}{\mathcal{P} }
\newcommand{\cQ}{\mathcal{Q} }
\newcommand{\cV}{\mathcal{V} }
\newcommand{\proj}{\mathrm{Proj}\;}
\newcommand{\id}{\mathrm{id}}
\newcommand{\im}{\mathrm{im}}
\newcommand{\Hom}{\mathrm{Hom}}
\newcommand{\cHom}{\mathcal{H}om}
\newcommand{\pdeg}{\mathrm{pardeg}\;}
\newcommand{\cpHom}{\mathcal{P}ar\mathcal{H}om}
\newcommand{\cpEnd}{\mathcal{P}ar\mathcal{E}nd}
\newcommand{\cspHom}{\mathcal{SP}ar\mathcal{H}om}
\newcommand{\pHom}{\mathrm{ParHom}}
\newcommand{\pEnd}{\mathrm{ParEnd}}
\newcommand{\spHom}{\mathrm{SParHom}}
\newcommand{\Ext}{\mathbf{Ext}}
\newcommand{\rank}{\mathrm{rank}\, }
\def\SL{\mathrm{SL}}
\def\git{/\!/ }
\def\Pic{\mathrm{Pic}}
\def\im{\mathrm{im}}
\begin{document}

\title[Birational geometry of the moduli of parabolic bundles]{Birational geometry of the moduli space of rank 2 parabolic vector bundles on a rational curve}

\author{Han-Bom Moon}
\address{Department of Mathematics, Fordham University, Bronx, NY 10458, USA}
\email{hmoon8@fordham.edu}

\author{Sang-Bum Yoo}
\address{Department of Mathematics, POSTECH, Pohang, Gyungbuk 790-784, Republic of Korea}
\email{sangbumyoo@postech.ac.kr}

\begin{abstract}
We investigate the birational geometry (in the sense of Mori's program) of the moduli space of rank 2 semistable parabolic vector bundles on a rational curve. We compute the effective cone of the moduli space and show that all birational models obtained by Mori's program are also moduli spaces of parabolic vector bundles with certain parabolic weights.
\end{abstract}

\maketitle

\section{Introduction}

In the last decade, it has been proved that studying the geometry of a moduli space in the viewpoint of the minimal model program (or \textbf{Mori's program}) for a moduli space is very fruitful. Mori's program for a moduli space $M$ consists of the following steps: 1) Compute the cone of effective divisors $\mathrm{Eff}(M)$. 2) For each divisor $D \in \mathrm{Eff}(M)$, find the projective model 
\[
	M(D) := \proj \bigoplus_{m \ge 0}H^{0}(M, \cO(\lfloor mD\rfloor)).
\]
3) Study the moduli theoretic interpretation (if there is) of $M(D)$ and its relation with $M$.

There are several intensively studied examples. For the moduli space $\overline{\cM}_{g}$ of stable curves, the famous Hassett-Keel program is a study of birational models of the form $\overline{\cM}_{g}(K_{\overline{\cM}_{g}}+\alpha D)$ with the boundary $D$ of singular curves and $\alpha \le 1$. It has been shown that many of these models are indeed moduli spaces of curves with worse singularities (for a nice overview, see \cite{FS13a}). For Hilbert scheme $\mathrm{Hilb}^{n}(\PP^{2})$ of $n$ points on $\PP^{2}$, many of its birational models appearing in Mori's program are moduli spaces of Bridgeland stable objects in $D^{b}(\PP^{2})$ with certain stability condition (\cite{ABCH13}). For the moduli space of stable sheaves $\mathrm{M}_{H}(v)$ on a K3 surface $X$, all flips of $\mathrm{M}_{H}(v)$ are moduli spaces of Bridgeland stable objects in $D^{b}(X)$ (\cite{BM13}).

\subsection{The main result of the paper} 

The aim of this paper is to investigate the birational geometry of the moduli space $\cM(\vec{a})$ of rank 2 semistable parabolic vector bundles of degree 0 on $\PP^{1}$, in the sense of Mori's program. The moduli functor depends on a parabolic weight $\vec{a}$, which imposes a certain stability condition. If we vary $\vec{a}$, then the moduli space changes. The study of this change has been well understood by many authors in \cite{Bau91, Ber94, BH95, Tha96, Tha02}. All birational morphisms between them are able to be described in terms of smooth blow-ups/downs, or variation of GIT. In this paper we revisit these birational modifications in terms of Mori's program. 

The following is the first main result of this paper, which is the first step of Mori's program. Let $n \ge 5$ be the number of parabolic points. 

\begin{theorem}[Theorem \ref{thm:effectiveconegeneral}]\label{thm:effectiveconegeneralintro}
Let $\vec{a}$ be a general parabolic weight such that $\cM(\vec{a})$ has the maximal Picard number $n+1$. Then the effective cone $\mathrm{Eff}(\cM(\vec{a}))$ is polyhedral and there are precisely $2^{n-1}$ extremal rays. 
\end{theorem}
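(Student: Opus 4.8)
The plan is to work in $N^{1}(\cM(\vec{a}))_{\QQ}\cong\QQ^{n+1}$ using the natural classes available on $\cM(\vec{a})$: the $n$ line bundles $\cL_{1},\dots,\cL_{n}$ attached to the parabolic datum at the marked points together with the determinant (theta) line bundle $\Theta$, which I expect to form a basis. Relative to this basis I would first write down $2^{n-1}$ candidate extremal divisors $D_{S}$, one for each subset $S\subseteq\{1,\dots,n\}$ of \emph{even} cardinality (there are exactly $2^{n-1}$ of these), and then verify three statements: each $D_{S}$ is effective, each spans an extremal ray, and together they span $\mathrm{Eff}(\cM(\vec{a}))$. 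The guiding model is $n=5$, where $\dim\cM(\vec{a})=2$ and $\cM(\vec{a})$ is a degree-$4$ del Pezzo surface; its effective cone is generated by its $16=2^{4}$ lines, which under the $D_{5}$ weight-lattice identification are exactly the half-spin weights indexed by even subsets. The general statement should be the higher-dimensional analogue of this picture.

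To define $D_{S}$ geometrically I would use destabilizing sub-line-bundles. For a parabolic bundle $(E,\{\ell_{i}\})$ of degree $0$, consider a sub-line-bundle $L\hookrightarrow E$ meeting the parabolic line $\ell_{i}$ exactly at the points $i\in S$; its parabolic degree is $\deg L+\sum_{i\in S}a_{i}$. Passing through $\ell_{i}$ imposes one condition per point, and a dimension count shows that the locus of such $(E,\{\ell_{i}\})$ is a divisor precisely when $|S|$ is even, the balanced degree $\deg L=-|S|/2$ (an integer exactly when $|S|$ is even) making $L$ a genuine wall-type destabilizer. Equivalently, $D_{S}$ is the closure of the image of the elementary transformation at the points of $S$, which changes the degree by an even integer and so preserves the component, explaining the parity. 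I would then compute $[D_{S}]$ in the $\{\cL_{i},\Theta\}$ basis, expecting a formula of the shape $[D_{S}]=c\,\Theta-\sum_{i\in S}\cL_{i}+\sum_{i\notin S}\cL_{i}$ up to normalization, that is, a single sign vector $\epsilon\in\{\pm1\}^{n}$ with an even number of $-1$'s, which makes the half-spin symmetry manifest.

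For extremality I would produce, for each $S$, a curve class $\gamma_{S}$ with $\gamma_{S}\cdot D_{S}<0$ and $\gamma_{S}\cdot D_{T}\ge 0$ for every $T\ne S$, that is, a curve contracted by the birational contraction attached to the ray $\QQ_{\ge0}[D_{S}]$, so that $D_{S}$ is rigid and cannot be a nontrivial sum of effective classes. These contractions and their contracted loci should be read off from the wall-crossing / variation-of-GIT description of how $\cM(\vec{a})$ changes as $\vec{a}$ crosses a wall, which is available from the results recalled in the introduction. The symmetry group $S_{n}\ltimes(\ZZ/2)^{n-1}$ acting on $N^{1}(\cM(\vec{a}))_{\QQ}$ (permutations of the points together with elementary transformations) permutes the $D_{S}$ transitively, so it suffices to check extremality for one representative.

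The main obstacle is the spanning statement: showing that \emph{every} effective class is a nonnegative combination of the $[D_{S}]$, so that there are no further extremal rays. For this I would first establish that $\cM(\vec{a})$ is a Mori dream space, indeed of Fano type, since it is a smooth projective rational variety arising from a variation of GIT, so that $\mathrm{Eff}(\cM(\vec{a}))$ is automatically polyhedral and the Mori chamber decomposition coincides with the GIT chamber decomposition of the weight space. The remaining, delicate point is to match the finitely many boundary chambers (equivalently, the facets of the dual of the cone spanned by the $D_{S}$) with the $2^{n-1}$ divisors and to verify that this accounting is complete, with no missed ray; here the half-spin $D_{n}$ combinatorics of the classes $[D_{S}]$, together with the transitive symmetry action, should force the cone they generate to be exactly $\mathrm{Eff}(\cM(\vec{a}))$.
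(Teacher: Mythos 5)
Your outline gets the indexing right (even subsets, hence $2^{n-1}$ rays, with the $n=5$ del Pezzo of degree $4$ and its $16$ lines as the correct sanity check), and your extremality plan (dual curves, or functionals) is workable --- the paper does exactly this with explicit linear functionals in Proposition \ref{prop:extremal}. But there is a genuine gap at the step you yourself flag as the main obstacle: spanning. Knowing that $\cM(\vec{a})$ is a Mori dream space of Fano type (which is the paper's Lemma \ref{lem:MDS}, via Abe's Fano result and BCHM) gives polyhedrality of $\mathrm{Eff}(\cM(\vec{a}))$, but says nothing about which rays generate it. Your proposed mechanism --- ``the Mori chamber decomposition coincides with the GIT chamber decomposition of the weight space'' --- is not automatic and is essentially circular: the statement that every Mori chamber of $\cM(\vec{a})$ is realized by a variation of parabolic weight is the paper's \emph{second} main theorem (Theorem \ref{thm:Moriprogram}), and its proof requires already knowing the effective cone (together with Pauly's identification of theta divisors with conformal blocks). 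Note also a mismatch of dimensions: the weight space is $n$-dimensional while $\mathrm{Eff}$ lives in an $(n+1)$-dimensional Picard group, so even granting a chamber correspondence one must still control what happens at and beyond the walls where weights degenerate. ``The half-spin combinatorics should force the cone to be exactly $\mathrm{Eff}$'' is not an argument; this is precisely where the paper's real work lies: every linear system $|\overline{\cO}(b_{1},\cdots,b_{n})-tE|$ is identified with a conformal block $\mathbb{V}_{N-t}(b_{1},\cdots,b_{n})$ (Lemma \ref{lem:effectivedivisorasCB}), ranks of conformal blocks are counted by boxed Catalan paths (Proposition \ref{prop:CBandboxCatpath}), and a combinatorial peeling lemma (Lemma \ref{lem:modifyingdoubleseq}) splits off a level-one block from any such path, giving by induction that every effective divisor is a nonnegative sum of level-one conformal blocks and $E$. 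You would need a substitute for this decomposition step, and your proposal does not contain one.

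Two smaller points. First, your dimension count for $D_{S}$ is off by one: for the locus of bundles admitting a sub-line-bundle $L$ through the parabolic lines at $S$ to be a divisor, one needs $|S|-(2\deg'+1)=1$ where $\deg L=-\deg'$, i.e.\ $\deg L = 1-|S|/2$, not $-|S|/2$; the parity conclusion survives, but with your normalization the candidate class corresponds to a \emph{level-zero} block, which vanishes --- the correct generators $\overline{\cO}(\sum_{j\in S}e_{j})-(|S|/2-1)E$ are level-one blocks. Second, the asserted $S_{n}\ltimes(\ZZ/2)^{n-1}$ action on $N^{1}(\cM(\vec{a}))_{\QQ}$ preserving effectiveness is not available for a general weight $\vec{a}$ (permutations and elementary transformations change the weight, giving maps to \emph{other} moduli spaces, and a lattice-level Weyl action need not preserve $\mathrm{Eff}$ in dimension $>2$); the paper avoids any such symmetry reduction by checking each generator directly.
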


Note that the computation of the effective cone of a variety is a hard problem in general. Except toric varieties, there are few examples of varieties with large Picard number and known effective cone. Among moduli spaces, most of examples with known effective cone have Picard number $\le 2$ or have a simplicial effective cone (for example, the moduli space of $n$-unordered pointed rational curves $\overline{\mathrm{M}}_{0,n}/S_{n}$ (\cite{KM13}), the moduli space of stable maps $\overline{\mathrm{M}}_{0,0}(\PP^{d}, d)$ (\cite{CHS08})). Theorem \ref{thm:effectiveconegeneralintro} provides one highly nontrivial example of an algebraic variety with completely known non-simplicial effective cone. 

After the computation of $\mathrm{Eff}(\cM(\vec{a}))$, the following theorem  is a simple consequence of the work of Pauly on generalized theta divisors (\cite{Pau96}). 

\begin{theorem}[Theorem \ref{thm:Moriprogram}]\label{thm:Moriprogramintro}
For any divisor $D \in \mathrm{int}\;\mathrm{Eff}(\cM(\vec{a}))$, the birational model $\cM(\vec{a})(D)$ is isomorphic to $\cM(\vec{b})$ for some parabolic weight $\vec{b}$. 
\end{theorem}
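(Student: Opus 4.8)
The plan is to identify the Mori chamber decomposition of $\mathrm{Eff}(\cM(\vec{a}))$ with the wall-and-chamber structure on the space of admissible parabolic weights, so that the projective model attached to each chamber is one of the moduli spaces $\cM(\vec{b})$. The space of admissible weights decomposes into finitely many chambers separated by walls; by the variation-of-GIT analysis recalled in the introduction (\cite{BH95, Tha02}), as $\vec{b}$ crosses an interior wall the moduli space $\cM(\vec{b})$ changes by a flip, while pushing $\vec{b}$ toward the boundary of the weight space induces a divisorial contraction that lowers the Picard number. Fixing a general $\vec{a}$ with maximal Picard number, this produces for every admissible $\vec{b}$ a birational map $\pi_{\vec{b}} : \cM(\vec{a}) \dashrightarrow \cM(\vec{b})$ which is a composition of flips (small, hence an isomorphism in codimension one) followed, when $\vec{b}$ is degenerate, by a divisorial contraction.

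Next I would use Pauly's description (\cite{Pau96}) of $\Pic(\cM(\vec{b}))$ and of the generalized theta line bundles to pin down, for each $\vec{b}$, the class $\Theta_{\vec{b}}$ of the natural ample polarization of $\cM(\vec{b})$, expressed in the standard generators of $\Pic(\cM(\vec{a}))$ with coefficients that vary piecewise-linearly in $\vec{b}$. It is precisely the explicitness of these coefficients that makes the subsequent geometry computable. Transporting the ample cone of $\cM(\vec{b})$ back to $\cM(\vec{a})$ — and adjoining the classes of the $\pi_{\vec{b}}$-exceptional divisors in the divisorial case — yields a full-dimensional subcone $C_{\vec{b}} \subset \mathrm{Eff}(\cM(\vec{a}))$ whose interior consists of classes realized by $\Theta_{\vec{b}}$.

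The heart of the argument is the \emph{covering statement}: as $\vec{b}$ ranges over all admissible weights, the closed cones $\overline{C_{\vec{b}}}$ tile $\mathrm{int}\;\mathrm{Eff}(\cM(\vec{a}))$, and the walls of this tiling match the VGIT walls in weight space. I would check this by comparing Pauly's explicit theta classes against the $2^{n-1}$ extremal rays furnished by Theorem \ref{thm:effectiveconegeneralintro}, verifying that every interior direction lands in some $C_{\vec{b}}$ and that adjacent chambers are glued along exactly the flipping or contracting walls predicted by the wall-crossing picture. This matching of the two combinatorial structures — the weight chambers on one side and the extremal-ray geometry of the effective cone on the other — is where I expect the genuine difficulty to lie, since it requires the precise shape of Pauly's coefficients in tandem with the extremal-ray description from the first main theorem.

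Finally, given $D \in \mathrm{int}\;\mathrm{Eff}(\cM(\vec{a}))$, the covering supplies a weight $\vec{b}$ with $D \in C_{\vec{b}}$, so that modulo classes in the stable base locus $D$ agrees with the transform of the ample class $\Theta_{\vec{b}}$ on $\cM(\vec{b})$. Passing to a common resolution $p : W \to \cM(\vec{a})$, $q : W \to \cM(\vec{b})$ and comparing sections — using that $q_{*}\cO_{W} = \cO_{\cM(\vec{b})}$ and that the relevant exceptional divisors are rigid — gives an isomorphism of section rings
\[
	\bigoplus_{m \ge 0} H^{0}(\cM(\vec{a}), \cO(\lfloor mD \rfloor)) \;\cong\; \bigoplus_{m \ge 0} H^{0}(\cM(\vec{b}), \cO(m\Theta_{\vec{b}})).
\]
Since $\Theta_{\vec{b}}$ is ample, the $\proj$ of the right-hand ring is $\cM(\vec{b})$, and therefore $\cM(\vec{a})(D) \cong \cM(\vec{b})$, as claimed.
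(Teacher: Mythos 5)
There is a genuine gap. Your plan correctly assembles the relevant ingredients (Pauly's theta divisors, the VGIT wall-crossing picture, a final comparison of section rings), but the step you yourself flag as ``the heart of the argument'' --- the covering statement that the transported ample cones $C_{\vec{b}}$ tile $\mathrm{int}\;\mathrm{Eff}(\cM(\vec{a}))$ --- is exactly the content of the theorem, and you offer no mechanism to prove it, only the intention to ``check'' it. Without it, nothing rules out the a priori possibility that Mori's program produces a model which is not among the $\cM(\vec{b})$: the VGIT analysis only exhibits \emph{some} chambers of the Mori chamber decomposition, not all of them. Note also that your framework quietly presupposes that $\cM(\vec{a})$ is a Mori dream space (otherwise there is no finite chamber decomposition and $\cM(\vec{a})(D)$ need not even be well defined); the paper has to establish this separately (Lemma \ref{lem:MDS}, via Abe's Fano result and \cite{BCHM10}), and your proposal never addresses it.

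The paper's proof shows the difficulty you anticipate is avoidable: no tiling is needed, because the conformal-block identification produces the weight $\vec{b}$ explicitly from the coordinates of $D$. Write $D = \overline{\cO}(b_{1}, \cdots, b_{n}) - tE$ and suppose $t > 0$. By Lemma \ref{lem:effectivedivisorasCB}, $|mD|$ is identified with $\mathbb{V}_{m(N-t)}(mb_{1}, \cdots, mb_{n})$, where $N = (\sum b_{i})/2$. Interiority of $D$ is used at precisely one point: $mD - E$ is effective for $m \gg 0$, so $\mathbb{V}_{m(N-t)-1}(mb_{1}, \cdots, mb_{n}) \ne 0$, which by Remark \ref{rem:basicpropertiesCB}(3) forces $b_{i} < N - t$; hence $\frac{1}{N-t}\vec{b}$ is an admissible weight. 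Then $D$ is (up to multiple) Pauly's theta divisor $\Theta$ on $\cM(\frac{1}{N-t}\vec{b})$, whose sections are the same conformal blocks and which is \emph{ample} there, so the two section rings coincide term by term and
\[
	\cM(\vec{a})(D) \cong \proj \bigoplus_{m \ge 0} \mathrm{H}^{0}\bigl(\cM(\tfrac{1}{N-t}\vec{b}), m\Theta\bigr) \cong \cM(\tfrac{1}{N-t}\vec{b}).
\]
The case $t \le 0$ is handled separately by contracting $E$ and invoking Proposition \ref{prop:parabolicGIT} to recognize the GIT quotient itself as $\cM(c\vec{b})$ for small $c$. So the correct repair of your argument is not to match two wall-and-chamber structures, but to observe that every interior class, once normalized, \emph{is} a theta divisor of an admissible weight --- which is a two-line computation rather than the combinatorial comparison you anticipated.
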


Indeed, even in the case that $D \in \partial \mathrm{Eff}(\cM(\vec{a}))$, we can describe the projective models as moduli spaces of parabolic bundles with fewer parabolic points (Remark \ref{rem:boundary}). In short, \textbf{all} projective models of $\cM(\vec{a})$ appearing in Mori's program of $\cM(\vec{a})$ are moduli spaces of parabolic vector bundles with certain degree and stability condition. 

Therefore as opposed to the case of Hilbert schemes and moduli spaces of ordinary stable sheaves, there are no newly appearing moduli spaces parametrizing objects in (some) derived categories. In this sense, the (non separated, non connected) moduli stack of rank 2 parabolic bundles over $\PP^{1}$ is closed under Mori's program. 

We may ask a generalized question. It would be interesting if one can show a similar statement for moduli spaces of an arbitrary rank parabolic bundles over an arbitrary genus $g$ smooth curve. 

\begin{question}
Let $C$ be a smooth projective curve. Let $\cM_{C}(r, \vec{a}, L)$ be the moduli space of rank $r$ semistable parabolic bundles with fixed determinant $L$ over $C$. Are all birational models of $\cM_{C}(r, \vec{a}, L)$ appearing in Mori's program of the form $\cM_{C}(r', \vec{b}, L')$?
\end{question}

After finishing the first draft of this paper, we recognized that the Cox ring of the moduli space of parabolic bundles has been studied by several mathematicians. In \cite{Muk05}, Mukai used the moduli space of rank two, degree one parabolic bundles on $\PP^{1}$ with a special parabolic weight to study the finite generation of certain invariant ring. Recently, Manon showed that the Cox ring of the moduli stack of rank two parabolic bundles with generic parabolic points is generated by level one and two conformal blocks (\cite{Man09b}). 

\subsection{Outline of the proof}

Our approach in attacking this problem was to find an elementary construction of $\cM(\vec{a})$. We show that when $\vec{a}$ is very small, then $\cM(\vec{a}) \cong (\PP^{1})^{n}\git_{L}\SL_{2}$, where $L = \cO(a_{1}, \cdots, a_{n})$ (Proposition \ref{prop:parabolicGIT}). When $\vec{a}$ becomes large, in Proposition \ref{prop:generalmoduli} we show that $\cM(\vec{a})$ is (possibly a flip of) the blow-up of $(\PP^{1})^{n}\git_{L}\SL_{2}$ at a smooth point.

The computation of $\mathrm{Eff}((\PP^{1})^{n}\git_{L}\SL_{2})$ is a classical result in invariant theory. We restate the result in Proposition \ref{prop:effectiveconeGIT}. When the Picard number is maximal, $\mathrm{Eff}((\PP^{1})^{n}\git_{L}\SL_{2})$ is a cone over the hypersimplex $\Delta(2, n)$. 

Now to compute $\mathrm{Eff}(\cM(\vec{a}))$ for a general $\vec{a}$, it suffices to compute the effective cone of a single point blow-up of $(\PP^{1})^{n}\git_{L}\SL_{2}$. To do so, we use the combinatorics of $\mathfrak{sl}_{2}$-conformal blocks. Originally, the conformal block was defined by using representations of affine Lie algebra, but it is well-known that there is an interesting connection with moduli spaces of parabolic vector bundles (\cite{Pau96}). Furthermore, in the $\mathfrak{sl}_{2}$-case, its rich algebraic/combinatorial structure is well understood by the work of many mathematicians, including Looijenga, Swinarski and B. Alexeev. After introducing two equivalent combinatorial models to count $\mathfrak{sl}_{2}$-conformal blocks, we show that any effective divisor on $\cM(\vec{a})$ is a nonnegative linear combination of $2^{n-1}$ level one conformal blocks. Thus we obtain Theorem \ref{thm:effectiveconegeneralintro}.

Finally, in \cite{Pau96}, Pauly introduced a natural ample divisor on $\cM(\vec{a})$ and showed that it is isomorphic to conformal blocks with certain weights. By using this result, we finish the proof of Theorem \ref{thm:Moriprogramintro}. 

\subsection{Structure of the paper}

The organization of this paper is as follows. In Section \ref{sec:prelimparbundle} we review the definition and basic properties of moduli spaces of semistable rank 2 parabolic vector bundles. Also we state some known results on their wall-crossing behavior. In Section \ref{sec:GIT}, we give an elementary construction of $\cM(\vec{a})$ as a simple GIT quotient. In Section \ref{sec:effectiveconeGIT}, we compute the Picard group and the effective cone of the GIT quotient that appeared in the previous section. Section \ref{sec:conformalblock} reviews an elementary definition and combinatorics of $\mathfrak{sl}_{2}$-conformal blocks. In Section \ref{sec:effectiveconegeneral}, we compute $\mathrm{Eff}(\cM(\vec{a}))$ for an effective parabolic weight $\vec{a}$. Finally in Section \ref{sec:thetadivisor}, we prove Theorem \ref{thm:Moriprogramintro}.

\subsection*{Notation and conventions}

We work over an algebraically closed field $\CC$ of characteristic 0. In this paper, we fix $n \ge 3$ distinct parabolic points $\vec{p} = (p_{1}, \cdots, p_{n})$ on $\PP^{1}$. The notion and combinatorics of parabolic bundles are significantly simplified for the rank 2 case. So in this paper, our discussions are focused on the rank 2 case only. We denote the set $\{1, \cdots, n\}$ by $[n]$.

\section{Preliminaries on the moduli space of parabolic vector bundles}\label{sec:prelimparbundle}

In this section, we review some of basics on parabolic vector bundles on $\PP^{1}$ and the moduli spaces of them. After that we review some known results on birational geometry of the moduli spaces. For details and proofs of the results in this section, see \cite{BH95, MS80, Tha96, Tha02, Yok95}.

\subsection{Moduli space of parabolic vector bundles}
A \textbf{rank 2 parabolic vector bundle on $\PP^{1}$} with parabolic structure at $\vec{p}$ is a collection $(E, \{V_{i}\}, \vec{a})$ where
\begin{enumerate}
\item $E$ is a vector bundle of rank 2 over $\PP^{1}$;
\item for each $i \in [n]$, $V_{i} \subset E|_{p_{i}}$ is a 1-dimensional subspace;
\item $\vec{a} = (a_{1}, \cdots, a_{n})$ is a sequence of rational numbers such that $0 \le a_{i}  < 1$, called a parabolic weight.
\end{enumerate}

Sometimes we write $(E, \{V_{i}\}, \vec{a})$ simply as $(E, \{V_{i}\})$, $(E, \vec{a})$ or even $E$, if there is no confusion. The set of all possible parabolic weights is $\overline{W} := ([0, 1) \cap \QQ)^{n}$. The interior of $\overline{W}$, which parametrizes positive parabolic weights, is denoted by $W$.

If we consider the moduli stack of parabolic vector bundles, it is highly non-separated even if we fix the rank and the degree of the underlying vector bundle. The notion of (semi)stability of parabolic vector bundles enables us to obtain a proper open substack.

A \textbf{parabolic line bundle} $(E, \vec{b})$ is simply a pair of line bundle $E$ and a parabolic weight $\vec{b} = (b_{1}, \cdots, b_{n})$. Let $(E, \{V_{i}\}, \vec{a})$ be a rank 2 parabolic vector bundle on $\PP^1$. A \textbf{parabolic subbundle} $(E', \vec{b})$ is a parabolic line bundle where $E' \subset E$ is a subbundle and 
\[
	b_{i} = \begin{cases} a_{i}, & E'|_{p_{i}} = V_{i}\\
	0, & E'|_{p_{i}}\ne V_{i}.\end{cases}
\]
A \textbf{parabolic quotient bundle} $(E'', \vec{c})$ is a parabolic line bundle where $E''$ is a quotient bundle and if $q : E \to E''$ is the quotient map, 
\[
	c_{i} = \begin{cases} a_{i}, & q|_{p_i}(V_{i}) \ne 0\\
	0, & q|_{p_i}(V_{i}) = 0.\end{cases}
\]

For a rank 1 or 2 parabolic bundle $(E, \vec{a})$, the \textbf{parabolic degree} of $E$ is 
\[
    \pdeg E= \deg E + \sum_{i=1}^{n}a_{i}.
\]
Finally, for a parabolic bundle $E$, the \textbf{parabolic slope} of $E$ is defined as
\[
	\mu(E) = \frac{\pdeg E}{\rank E}.
\]

\begin{definition}
A rank 2 parabolic bundle $(E, \{V_{i}\}, \vec{a})$ is \textbf{(semi)stable} if for every parabolic subbundle $(E',\vec{b})$,
\[
	\mu(E') \; (\le) < \mu(E).
\]
\end{definition}

We say that two rank 2 semistable parabolic vector bundles are \textbf{S-equivalent} if they have the same factors in their Jordan-H\"older filtrations. In concrete terms, this equivalence relation is generated by the following: If $(E, \{V_{i}\}, \vec{a})$ is semistable and $(E', \vec{b}) \hookrightarrow (E, \{V_{i}\}, \vec{a})$ is a parabolic subbundle such that $\mu(E') = \mu(E)$, then $E \equiv E' \oplus E/E'$. By definition, if $E$ is stable, then it is S-equivalent to itself only. 

Let $\cM(\vec{a},d)$ be the coarse moduli space of S-equivalent classes of rank 2, degree $d$, semistable parabolic vector bundles on $\PP^{1}$ with parabolic structure $\vec{a}$ at $\vec{p}$. We denote $\cM(\vec{a}, 0)$ by $\cM(\vec{a})$. We denote the open subvariety of $\cM(\vec{a})$ parametrizing stable parabolic vector bundles by $\cM(\vec{a}, d)^{s}$ (or $\cM(\vec{a})^{s}$ if $d = 0$). 

\begin{theorem}[\protect{\cite[Theorem 4.1]{MS80}}]\label{moduli par bdls}
For $\vec{a} \in W$, the moduli space $\cM(\vec{a},d)$ is an irreducible normal projective variety of dimension $n-3$, if it is nonempty.
\end{theorem}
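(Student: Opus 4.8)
The plan is to realize $\cM(\vec{a}, d)$ as a GIT quotient of a smooth quasi-projective parameter space and then read off all four assertions from standard properties of such quotients together with parabolic deformation theory. (Mehta--Seshadri's original proof instead identifies the stable locus with a space of unitary representations of $\pi_{1}(\PP^{1} \setminus \vec{p})$ whose holonomies around the punctures are prescribed by $\vec{a}$; what follows is the algebraic GIT construction in the style of Seshadri.)

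First I would establish boundedness: for fixed rank $2$, degree $d$ and weight $\vec{a}$, the semistable parabolic bundles form a bounded family, so there is an integer $m \gg 0$ with $E(m)$ globally generated and $H^{1}(E(m)) = 0$ for every semistable $E$. Writing $N = \chi(E(m)) = d + 2m + 2$, each such $E$ occurs as a quotient $\cO(-m)^{\oplus N} \twoheadrightarrow E$, i.e. a point of a Quot scheme, and recording the line $V_{i} \subset E|_{p_{i}}$ at each parabolic point enlarges this to a parameter space $R \subset \mathrm{Quot} \times \prod_{i=1}^{n} \PP(E|_{p_{i}})$. Because $\PP^{1}$ is a curve, the obstruction group $\mathrm{Ext}^{1}(K, E)$ to smoothness of the Quot scheme, with $K$ the kernel of the quotient, vanishes (it is a quotient of $H^{1}(E(m))^{\oplus N} = 0$), and the fibrewise projective lines are smooth, so $R$ is smooth; the natural action of $\mathrm{GL}_{N}$, which factors through $\mathrm{PGL}_{N}$, has orbits exactly the isomorphism classes of parabolic bundles. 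The technical heart is to choose a $\mathrm{GL}_{N}$-linearization, built from $\vec{a}$, whose Hilbert--Mumford numerical criterion reproduces the parabolic slope inequality $\mu(E') < \mu(E)$ for parabolic subbundles; this is the Seshadri/Mehta--Seshadri stability correspondence, and it is where the weights enter. Granting it, Mumford's GIT yields $\cM(\vec{a}, d) = R^{ss} \git \SL_{N}$ as a projective variety, giving projectivity, and normality is then automatic since $R$ is smooth, hence normal, and a good quotient of a normal variety is normal.

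For the dimension I would pass to deformation theory. The Zariski tangent space at a stable $[E]$ is $H^{1}(\cpEnd E)$, where $\cpEnd E \subset \mathcal{E}nd(E)$ is the subsheaf of endomorphisms preserving each line $V_{i}$; comparing the two through
\[
0 \to \cpEnd E \to \mathcal{E}nd(E) \to \bigoplus_{i=1}^{n} \CC_{p_{i}} \to 0,
\]
Riemann--Roch on $\PP^{1}$ gives $\chi(\cpEnd E) = 4 - n$. Stability forces $H^{0}(\cpEnd E) = \CC$, while $H^{2}$ vanishes because $\PP^{1}$ is a curve, so the deformation is unobstructed and the stable locus is smooth of dimension $\dim H^{1}(\cpEnd E) = 1 - (4-n) = n - 3$. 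For irreducibility I would stratify $R$ by the splitting type $E \cong \cO(k) \oplus \cO(d-k)$: by semicontinuity the balanced type $k = \lfloor d/2 \rfloor$ is the generic (open) stratum, it is irreducible since the parabolic data sweep out a product of projective lines, and assuming $\cM(\vec{a},d)$ nonempty its stable part is an open dense subset of $R^{ss}$, forcing $R^{ss}$, and hence $\cM(\vec{a}, d)$, to be irreducible.

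The main obstacle is the stability correspondence of the second paragraph: pinning down the precise $\mathrm{GL}_{N}$-linearization attached to $\vec{a}$ and checking, through the numerical criterion, that GIT (semi)stability coincides term by term with parabolic (semi)stability. A secondary point needing care is the bookkeeping behind irreducibility, namely verifying that the non-balanced strata of $R^{ss}$ lie in the closure of the balanced one and contribute no extra components. Once these are settled, projectivity, normality and the dimension count follow formally from GIT and Riemann--Roch.
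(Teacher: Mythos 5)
The paper does not actually prove this theorem: it is imported verbatim from \cite[Theorem 4.1]{MS80}, and Section \ref{sec:prelimparbundle} opens by deferring all proofs to the references. So the comparison here is with the cited source rather than with an argument in the paper. Your proposal is, in substance, a reconstruction of that source's proof: Mehta and Seshadri establish their Theorem 4.1 precisely by Seshadri's GIT method (boundedness, a Quot scheme enhanced by flag data at the parabolic points, a weight-dependent linearization, and a Hilbert--Mumford analysis). Your parenthetical claim that ``Mehta--Seshadri's original proof instead identifies the stable locus with unitary representations'' conflates their two main theorems: the unitary correspondence is a separate result, while the construction of the moduli space as a normal projective variety is exactly the GIT argument you outline, so you are not offering an alternative route but the cited one. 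Your dimension count is also consistent with the machinery the paper itself quotes: the identification of the tangent space at a stable point with $\mathrm{H}^{1}(\cpEnd(E))$ is Theorem \ref{def of par bdl} combined with Lemma \ref{lem:Extordinarycohomology}, the fact that $h^{0}(\cpEnd(E)) = 1$ at stable points is Proposition \ref{mor of stable par bdls}, and the Euler characteristic computation $\chi(\cpEnd(E)) = 4-n$ via the skyscraper sequence is the same style of argument as Proposition \ref{dim of centers}.

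Two caveats. First, the steps you label ``the technical heart'' (matching GIT stability with parabolic stability through the numerical criterion) and ``secondary bookkeeping'' (irreducibility via closure of the balanced stratum) are not peripheral: they constitute essentially all of the mathematical content of the cited proof, and in addition projectivity is not formal, since $R^{ss}$ is only quasi-projective --- one needs a properness or semistable-reduction argument showing that the GIT-semistable locus of a projective closure consists exactly of semistable parabolic bundles. Second, both your dimension argument and your irreducibility argument silently assume that the \emph{stable} locus is nonempty and dense in $R^{ss}$, which does not follow from nonemptiness of $\cM(\vec{a},d)$. For instance, take $n = 5$, degree $0$, and $a_{1} = a_{2}+a_{3}+a_{4}+a_{5} < 1$: then the only semistable underlying bundle is $\cO\oplus\cO$, every semistable parabolic structure is strictly semistable (the constant section through $V_{1}$ always has parabolic slope exactly $\mu(E)$), all of them are S-equivalent to one fixed direct sum of parabolic line bundles, and the moduli space is a single point rather than of dimension $n-3 = 2$. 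So your argument --- like the statement as transcribed --- really requires the weight to be effective in the sense of Definition \ref{def:effecivegeneralweights}, and this hypothesis should be made explicit at the point where you write that ``its stable part is an open dense subset of $R^{ss}$.''
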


\subsection{Deformation theory of parabolic vector bundles}\label{sec:deformation}

The deformation theory of parabolic vector bundles has been worked out by Yokogawa in \cite{Yok95} in great generality. 

Let $(E, \{V_{i}\}, \vec{a})$ and $(F, \{W_{i}\}, \vec{b})$ be two rank 2 parabolic vector bundles. A bundle morphism $f : E \to F$ is called \textbf{(strongly) parabolic} if $f(V_{i}) = 0$ whenever $a_{i}\; (\ge) > b_{i}$. We shall denote by $\cpHom(E,E')$ and $\cspHom(E,E')$ the sheaves of parabolic and strongly parabolic morphisms, and by $\pHom(E,E')$ and $\spHom(E,E')$ their global sections respectively. We also use the notation $\cpEnd(E):=\cpHom(E,E)$ and $\pEnd(E):=\pHom(E,E)$. 

The following fact is a standard consequence of the notion of the stability, as that of ordinary vector bundles. The proof is identical to that of \cite[Proposition 4.7, Corollary 4.8]{Fri98}.

\begin{proposition}\label{mor of stable par bdls}
Let $E$ and $F$ be stable parabolic bundles such that $\mu(E) \ge \mu(F)$. Then $\dim \pHom(E,E')=1$ if $E$ and $F$ are isomorphic, and $0$ otherwise. In particular, $\pEnd(E)=\CC\cdot\id.$
\end{proposition}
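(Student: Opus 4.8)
The plan is to follow the classical argument for ordinary stable bundles (\cite[Prop.~4.7, Cor.~4.8]{Fri98}) essentially verbatim, replacing slope by parabolic slope, sub/quotient bundles by their parabolic analogues, and morphisms by parabolic morphisms (and reading the target as $F$ rather than the mistyped $E'$). Everything rests on additivity of parabolic degree in a short exact sequence $0 \to (E',\vec b) \to (E,\vec a) \to (E'',\vec c)\to 0$ of rank $2$ parabolic bundles, namely $\pdeg E = \pdeg E' + \pdeg E''$, which holds because at each $p_i$ exactly one of the sub- and the quotient line picks up the weight $a_i$. In particular, stability of $E$ is equivalent to the statement that every rank $1$ parabolic quotient $E''$ satisfies $\mu(E'') > \mu(E)$. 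The whole proposition reduces to the single claim: if $E,F$ are stable with $\mu(E)\ge\mu(F)$ and $f : E \to F$ is a nonzero parabolic morphism, then $f$ is an isomorphism of parabolic bundles (so in particular $\mu(E)=\mu(F)$ and $E\cong F$).

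To prove this I would analyze the image $I = \im f$, which is simultaneously a parabolic quotient of $E$ and a parabolic subsheaf of $F$. Suppose first $\rank I = 1$. Then $\ker f$ is a rank $1$ parabolic subbundle and $I\cong E/\ker f$ is a rank $1$ parabolic quotient, so stability of $E$ gives $\mu(I_q) > \mu(E)$, where $I_q$ carries the quotient weights. Let $R\subseteq F$ be the saturation of $I$, a rank $1$ parabolic subbundle; stability of $F$ gives $\mu(R_s) < \mu(F)$, where $R_s$ carries the sub weights. The key step is the inequality $\mu(I_q) \le \mu(R_s)$: comparing contributions $p_i$ by $p_i$, the torsion length of $R/I$ at $p_i$ together with the parabolic condition on $f$ (which forces $f(V_i)=0$ when $a_i > b_i$ and $f(V_i)\subseteq W_i$ when $0<a_i\le b_i$) makes each local term nonnegative. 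Chaining the three inequalities gives $\mu(E) < \mu(I_q) \le \mu(R_s) < \mu(F) \le \mu(E)$, a contradiction. Hence $f$ is injective and $I$ has rank $2$.

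When $\rank I = 2$, the map $f$ is injective with torsion cokernel $T$, and $I\cong E$ as a bundle sits inside $F$ with $\deg F - \deg E = \operatorname{length} T$. An analogous point-by-point comparison now shows $\pdeg E = \pdeg I_q \le \pdeg I_s \le \pdeg F$, where $I_s$ denotes $I$ with the sub structure induced from $F$; combined with $\mu(E)\ge\mu(F)$ this forces every inequality to be an equality. In particular $\mu(E)=\mu(F)$, the cokernel $T$ vanishes, and the weights and flags of $E$ and $F$ match, so $f$ is an isomorphism of parabolic bundles. I expect this cokernel-vanishing step to be the main obstacle: one must verify that equality of parabolic degrees genuinely excludes a nonzero torsion cokernel, which is exactly where the interplay between the integer $\operatorname{length} T$ and the rational weight defects $a_i - b_i$ has to be bookkept with care. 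Conceptually this reflects the fact that the slope-$\mu$ semistable parabolic bundles form an abelian category in which the stable objects are precisely the simple ones, so that the claim is just Schur's lemma; I would use that viewpoint as a sanity check on the concrete computation.

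Granting the claim, the proposition follows quickly. If $E\not\cong F$, there is no nonzero parabolic morphism, so $\pHom(E,F)=0$. If $E\cong F$, fix a parabolic isomorphism $g$; then any nonzero $\phi\in\pEnd(E)$ is an isomorphism by the claim, and since $\PP^{1}$ carries only constant global functions, $\operatorname{tr}\phi$ and $\det\phi$ are constants. Choosing a root $\lambda$ of the characteristic polynomial $x^2 - (\operatorname{tr}\phi)\,x + \det\phi$, the parabolic endomorphism $\phi - \lambda\,\id$ has identically vanishing determinant, hence is not an isomorphism, hence is $0$; thus $\pEnd(E)=\CC\cdot\id$. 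Composing with $g$ identifies $\pHom(E,F)$ with $\pEnd(E)$, so it is one-dimensional, which completes the proof.
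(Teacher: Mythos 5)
Your proposal is correct and is essentially the paper's own proof: the paper gives no argument beyond asserting that the proof is identical to \cite[Proposition 4.7, Corollary 4.8]{Fri98}, and your write-up is exactly that adaptation (image analysis with parabolic slopes and induced sub/quotient structures to show any nonzero parabolic map between the stable bundles is an isomorphism, then Schur's lemma via the characteristic polynomial). The one step you flag as delicate --- that equality of parabolic slopes excludes a nonzero torsion cokernel --- does close, precisely because every weight lies in $[0,1)$: a point $p_i$ in the support of $T$ contributes $t_i + b_i - a_i \ge 1 - a_i > 0$ to $\pdeg F - \pdeg E$ (and points away from the $p_i$ contribute their nonnegative torsion lengths), so equality of slopes forces $T = 0$, $a_i = b_i$, and $f(V_i) = W_i$ for all $i$.
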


The category of parabolic bundles on $\PP^1$ is not abelian. However, Yokogawa showed that it is contained in an abelian category $\cP$ as a full subcategory using a generalized notion of parabolic sheaves, and $\cP$ has enough injective objects. For each parabolic vector bundle $E$, $\Ext^{i}(E,-)$ is defined by the $i$-th right derived functor of $\pHom(E,-)$ in $\cP$.

\begin{lemma}[\protect{\cite[Theorem 1.4, 3.6]{Yok95}}]\label{lem:Extordinarycohomology}
Let $E_{1}, E_{2}$ be two parabolic bundles. Then 
\[
	\Ext^{i}(E_{2},E_{1})\cong \mathrm{H}^{i}(\cpHom(E_{2},E_{1}))
\] 
for $i \ge 0$.
\end{lemma}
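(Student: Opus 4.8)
The plan is to compute $\Ext^i(E_2,-)$ as the right derived functor of $\pHom(E_2,-)$ in the abelian category $\cP$ and to identify it with sheaf cohomology by a Grothendieck composite-functor argument. By definition $\pHom(E_2,-)$ is the composition of global sections with the sheafified parabolic Hom, so on $\cP$ one has a factorization
\[
\pHom(E_2,-)=\Gamma(\PP^{1},-)\circ\cpHom(E_2,-).
\]
Since $\cP$ has enough injectives, the composite-functor theorem will yield the asserted isomorphism once two facts are in place: first, that the sheaf-level functor $\cpHom(E_2,-)$ is exact, so that it has no higher derived functors and carries injective resolutions to resolutions; and second, that $\cpHom(E_2,-)$ sends injective objects of $\cP$ to $\Gamma$-acyclic sheaves on $\PP^{1}$.

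For the exactness of $\cpHom(E_2,-)$, the key point is that $E_2$ is a parabolic \emph{bundle} and not merely a parabolic sheaf. Locally on $\PP^{1}$ such an object is a free module with a filtration, and parabolic Hom out of it is computed, filtration piece by filtration piece, as ordinary Hom out of a locally free sheaf; hence it is exact and the higher parabolic sheaf-Ext functors $\mathcal{E}xt^{q}(E_2,-)$ vanish for $q\ge 1$. The cleanest way to make this precise is to pass to the Biswas--Borne correspondence between parabolic bundles with rational weights of bounded denominator and sheaves on a root stack $\mathcal{X}\to\PP^{1}$: under this equivalence of abelian categories $\cP$ is identified with sheaves on $\mathcal{X}$, the functor $\cpHom$ becomes ordinary $\cHom_{\mathcal{X}}$, a parabolic bundle becomes a genuine locally free sheaf $\mathcal{E}_2$, and exactness of $\cHom_{\mathcal{X}}(\mathcal{E}_2,-)$ is the standard fact for a locally free first argument (locally it is $-\otimes\mathcal{E}_2^{\vee}$).

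The hard part will be the acyclicity in the second bullet, since $\cP$ is a nonstandard abelian category and one must control how $\cpHom(E_2,-)$ interacts with its injective resolutions. Again I would argue on the root stack $\mathcal{X}$: an injective object of $\cP$ corresponds to an injective $\cO_{\mathcal{X}}$-module, which is in particular flasque, and $\cHom_{\mathcal{X}}(\mathcal{E}_2,I)$ with $\mathcal{E}_2$ locally free remains flasque and therefore $\Gamma$-acyclic. Granting both points, the Grothendieck spectral sequence
\[
\mathrm{H}^{p}\!\big(\mathcal{E}xt^{q}(E_2,E_1)\big)\;\Longrightarrow\;\Ext^{p+q}(E_2,E_1)
\]
has only its $q=0$ row nonzero, so it degenerates and collapses to
\[
\Ext^{i}(E_2,E_1)\;\cong\;R^{i}\Gamma\big(\cpHom(E_2,E_1)\big)\;=\;\mathrm{H}^{i}\big(\cpHom(E_2,E_1)\big),
\]
which is the claim. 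The whole argument is the parabolic analogue of the classical local-to-global spectral sequence for $\ext$, with the stack picture serving only to reduce each nonstandard input to its familiar sheaf-theoretic counterpart.
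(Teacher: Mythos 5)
First, a point of reference: the paper does not prove this lemma at all --- it is imported wholesale from Yokogawa \cite[Theorems 1.4, 3.6]{Yok95} --- so your attempt can only be measured against the argument in that reference. Your skeleton is the right one, and essentially the one Yokogawa uses: factor $\pHom(E_2,-)=\Gamma(\PP^1,-)\circ \cpHom(E_2,-)$, show that $\cpHom(E_2,-)$ is exact because $E_2$ is a parabolic \emph{bundle}, show that it carries injectives of $\cP$ to $\Gamma$-acyclic sheaves, and collapse the Grothendieck spectral sequence.

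The genuine gap is in how you verify these hypotheses, concentrated in the sentence ``under this equivalence of abelian categories $\cP$ is identified with sheaves on $\mathcal{X}$.'' That is not what the Biswas--Borne correspondence gives. It identifies quasi-coherent sheaves on the $N$-th root stack with parabolic sheaves whose weights lie in $\frac{1}{N}\ZZ$, whereas Yokogawa's $\cP$ --- the category in which the paper \emph{defines} $\Ext^{i}(E_2,-)$ as a derived functor --- is strictly larger: its objects are $\RR$-filtered sheaves with arbitrary jumps, not constrained to a fixed denominator. Ext groups taken in a full abelian subcategory need not agree with Ext groups taken in the ambient category, so as written your argument computes, a priori, the wrong groups. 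To close the gap you must either prove that the inclusion of the bounded-denominator subcategory into $\cP$ induces isomorphisms on $\Ext^{i}$ (for instance, that it is exact and preserves injectives, via an exact left adjoint), or work directly in $\cP$; the latter is what Yokogawa does, and it is not hard: a parabolic bundle is locally a direct sum of parabolic line bundles, and $\cpHom((L,a),F)\cong L^{\vee}\otimes F_{a}$ where the evaluation $F\mapsto F_{a}$ at a weight is exact on filtered sheaves, which gives both the exactness and (applied to injectives, whose weight pieces are injective $\cO_{\PP^1}$-modules) the flasqueness. A second, smaller omission: even granting the root-stack picture, $\cHom_{\mathcal{X}}(\cE_2,\cE_1)$ is a sheaf on $\mathcal{X}$, while the lemma asserts cohomology on $\PP^1$ of $\cpHom(E_2,E_1)\cong\pi_{*}\cHom_{\mathcal{X}}(\cE_2,\cE_1)$; identifying $\mathrm{H}^{i}(\mathcal{X},-)$ with $\mathrm{H}^{i}(\PP^1,\pi_{*}(-))$ uses exactness of $\pi_{*}$ for the (tame, characteristic $0$) coarse moduli map $\pi:\mathcal{X}\to\PP^1$, a step your write-up never makes.
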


For each parabolic weight $\vec{a}$, let $I_{\vec{a}} = \{i \in [n]\;|\; a_{i} \ne 0\}$. We say that two weights $\vec{b}$ and $\vec{c}$ are \textbf{complementary} if $I_{\vec{b}} \sqcup I_{\vec{c}}$ defines a partition of $\{1, \cdots, n\}$. When two parabolic line bundles $(E, \vec{b})$ and $(F, \vec{c})$ have complementary weights, an \textbf{extension} of $(F,\vec{c})$ by $(E,\vec{b})$ is a short exact sequence of parabolic morphisms
\[
	\xymatrix{0\ar[r]&(E,\vec{b})\ar[r]&(G,\{V_{i}\},\vec{a})
	\ar[r]^-{q}&(F,\vec{c})\ar[r]&0}
\]
where 
\[
	a_{i}=\begin{cases}b_{i}, & i \in I_{\vec{b}}\\ c_{i},& i \in I_{\vec{c}}\end{cases}
\]
and $V_{i} = E|_{p_{i}}$ if $i \in I_{\vec{b}}$ and $q(V_{i}) \ne 0$ if $i \in I_{\vec{c}}$. It is obvious that $(E, \vec{b})$ (resp. $(F, \vec{c})$) is a parabolic subbundle (resp. quotient bundle) of $(G, \{V_{i}\}, \vec{a})$.

The proof of the following proposition is identical to the arguments in \cite[p. 31]{Fri98}.

\begin{proposition}\label{ext of par line bdls}
For two parabolic line bundles $(E, \vec{b})$ and $(F, \vec{c})$ which have complementary weights, the isomorphism classes, up to scalar multiplications, of nonsplit extensions of $F$ by $E$ are parametrized by $\PP\Ext^{1}(F, E)$.
\end{proposition}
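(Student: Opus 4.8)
The plan is to deduce the statement from the classical Yoneda theory of extensions in an abelian category, carried out inside Yokogawa's abelian category $\cP$ of parabolic sheaves, and then to account for the scalar ambiguity coming from the automorphisms of the two line bundles. The homological skeleton is exactly that of \cite[p.~31]{Fri98}; only the parabolic bookkeeping has to be added.

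First I would set up the Yoneda correspondence. Since $\cP$ has enough injective objects and $\Ext^{1}(F,E)$ is the first right derived functor of $\pHom(F,-)$, the formal theory of extensions in an abelian category produces a bijection between the elements of $\Ext^{1}(F,E)$ and the equivalence classes of short exact sequences $0 \to (E,\vec{b}) \to (G,\{V_{i}\},\vec{a}) \to (F,\vec{c}) \to 0$ in $\cP$, where two such sequences are equivalent when there is an isomorphism of the middle terms restricting to the identity on $(E,\vec{b})$ and inducing the identity on $(F,\vec{c})$. Under this bijection the zero class corresponds to the split extension $(E,\vec{b})\oplus(F,\vec{c})$, and the nonsplit extensions correspond precisely to the nonzero classes. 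This part is insensitive to the particular abelian category.

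Second, the genuinely parabolic point is to match these abstract extensions with the concrete short exact sequences of parabolic morphisms appearing in the definition, that is, to check that complementarity forces the middle term to be a parabolic bundle with the prescribed flags and weights. The underlying $\cO_{\PP^{1}}$-module $G$ is an extension of the line bundle $F$ by the line bundle $E$, hence torsion-free of rank $2$ on a smooth curve and therefore locally free. At each marked point $p_{i}$ the filtration of $G|_{p_{i}}$ is induced from those of $E$ and $F$; since $I_{\vec{b}}$ and $I_{\vec{c}}$ partition $[n]$, exactly one of the two line bundles carries the nonzero weight $a_{i}$ at $p_{i}$, and this forces $V_{i}=E|_{p_{i}}$ when $i\in I_{\vec{b}}$ and $q(V_{i})\ne 0$ when $i\in I_{\vec{c}}$. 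This is exactly the data in the definition of an extension, so the abstract and concrete notions coincide; Lemma \ref{lem:Extordinarycohomology}, identifying $\Ext^{1}(F,E)$ with $\mathrm{H}^{1}(\cpHom(F,E))$, additionally guarantees that the parameter space is a finite-dimensional vector space.

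Third I would divide out by the scalar action. Any automorphism of the parabolic line bundle $E$ is multiplication by a nonzero constant, as global endomorphisms of a line bundle on $\PP^{1}$ are scalars, so $\pEnd(E)=\CC\cdot\id$ and likewise for $F$. For $\lambda\in\CC^{*}$ the class $\lambda e$ is represented by the same sequence with the inclusion $E\hookrightarrow G$ rescaled by $\lambda$, hence has a middle term isomorphic to that of $e$. Passing to isomorphism classes of nonsplit extensions up to scalar multiplication therefore collapses each punctured line $\CC^{*}\cdot e$ to a single point, and the nonzero classes modulo this $\CC^{*}$-action are by definition the points of $\PP\Ext^{1}(F,E)$, giving the asserted parametrization. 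The main obstacle is the second step: all the rest is standard homological algebra, but one must verify carefully that the abelian-category extensions in $\cP$ reproduce the explicit short exact sequences of parabolic morphisms, with locally free middle term and the correct induced parabolic structure, which is precisely where the complementarity of $\vec{b}$ and $\vec{c}$ enters.
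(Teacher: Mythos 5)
Your proposal is correct and follows essentially the same route as the paper, whose entire proof is a citation of the classical extension-theory argument in \cite[p.~31]{Fri98}: the Yoneda identification of $\Ext^{1}$ with strict equivalence classes of short exact sequences, followed by the quotient by the scalar action coming from $\pEnd(E)=\pEnd(F)=\CC\cdot\id$. The only addition you make is to spell out, inside Yokogawa's abelian category $\cP$, why complementarity of the weights forces the middle term to be a genuine rank~2 parabolic bundle with the prescribed flags; this is exactly the parabolic bookkeeping the paper leaves implicit, and your verification of it is sound.
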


We have a generalized Serre duality for parabolic bundles.

\begin{proposition}[\protect{\cite[Proposition 3.7]{Yok95}}]\label{Serre duality}
For any parabolic bundles $E$ and $F$, there are natural isomorphisms
\[
	\Ext^{1-i}(E,F\otimes\cO_{\PP^1}(n-2)) \cong 
	\mathrm{H}^i(\cspHom(F,E))^{\vee}
\]
for $i = 0, 1$.
\end{proposition}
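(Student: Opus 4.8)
The plan is to deduce the statement from classical Serre duality on $\PP^1$ together with a single sheaf-level duality relating parabolic and strongly parabolic homomorphisms. First I would use Lemma~\ref{lem:Extordinarycohomology} to identify
\[
\Ext^{1-i}(E, F\otimes\cO_{\PP^1}(n-2))\cong \mathrm{H}^{1-i}\big(\cpHom(E, F\otimes\cO_{\PP^1}(n-2))\big).
\]
Since $\cO_{\PP^1}(n-2)$ carries the trivial parabolic structure, the weights of $F\otimes\cO_{\PP^1}(n-2)$ coincide with those of $F$, so $\cpHom(E, F\otimes\cO_{\PP^1}(n-2))\cong\cpHom(E,F)\otimes\cO_{\PP^1}(n-2)$; this is locally free, being an elementary modification of $\cHom(E,F)$ at the finitely many parabolic points. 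Ordinary Serre duality on $\PP^1$ (with $\omega_{\PP^1}\cong\cO_{\PP^1}(-2)$) then gives a natural isomorphism $\mathrm{H}^{1-i}(\cpHom(E,F)\otimes\cO_{\PP^1}(n-2))\cong \mathrm{H}^{i}(\cpHom(E,F)^{\vee}\otimes\cO_{\PP^1}(-n))^{\vee}$. Hence the proposition is equivalent to the sheaf isomorphism $\cspHom(F,E)\cong\cpHom(E,F)^{\vee}\otimes\cO_{\PP^1}(-n)$.

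To prove this, and writing $\cO_{\PP^1}(-n)\cong\cO_{\PP^1}(-\sum_i p_i)$, it suffices to produce a perfect pairing
\[
\cpHom(E,F)\otimes\cspHom(F,E)\longrightarrow\cO_{\PP^1}\Big(-\sum_{i}p_i\Big),\qquad (\phi,\psi)\longmapsto\mathrm{tr}(\psi\circ\phi),
\]
where $\psi\circ\phi\in\cpEnd(E)$ and $\mathrm{tr}$ is the usual trace. Away from the parabolic points $\cpHom(E,F)$ and $\cspHom(F,E)$ agree with $\cHom(E,F)$ and $\cHom(F,E)$, and this is just the standard perfect trace pairing valued in $\cO_{\PP^1}$; the entire content is local at each $p_i$. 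Fixing a coordinate $t$ at $p_i$ and bases adapted to the lines $V_i\subset E|_{p_i}$ and $W_i\subset F|_{p_i}$, I would write $\phi,\psi$ as $2\times2$ matrices and check, case by case according to the order of $a_i$ and $b_i$ (and the degenerate case $a_i=0$), two things: that $\mathrm{tr}(\psi\circ\phi)$ vanishes at $p_i$ whenever $\phi$ is parabolic and $\psi$ is strongly parabolic, so that the pairing indeed takes values in $\cO_{\PP^1}(-\sum_i p_i)$; and that the induced pairing of the local lattices is perfect, i.e. the defining conditions of $\cpHom(E,F)$ and of $\cspHom(F,E)$ are exchanged by the trace pairing up to the first-order twist at $p_i$. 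Concretely, one verifies that the dual lattice of the parabolic condition on $\phi$, after multiplication by $t$, is exactly the lattice cut out by the strongly parabolic condition on $\psi$.

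The main obstacle is precisely this local computation, and the delicate point is the correct, slightly asymmetric, bookkeeping of the two conditions. A strongly parabolic $\psi$ is forced to send the \emph{whole} fiber $F|_{p_i}$ into $V_i$ (and to annihilate $W_i$ when $b_i\ge a_i$), rather than merely to respect the flag line; it is this extra vanishing that supplies the first-order zero of $\mathrm{tr}(\psi\circ\phi)$ along $\sum_i p_i$ and makes the pairing unimodular at each point. In other words, the asymmetry between the parabolic and strongly parabolic conditions is exactly what produces the twist by the parabolic divisor $\sum_i p_i$, and hence the shift $\cO_{\PP^1}(n-2)=\omega_{\PP^1}(\sum_i p_i)$ appearing in the statement. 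Everything outside this step is formal: the identification through Lemma~\ref{lem:Extordinarycohomology} and ordinary Serre duality are functorial, so the resulting isomorphism is natural in $E$ and $F$, and the cases $i=0,1$ follow simultaneously.
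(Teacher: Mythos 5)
Your proof is correct, but it takes a genuinely different route from the paper: the paper offers no argument for this proposition at all, simply citing \cite[Proposition 3.7]{Yok95}, where the duality is established in far greater generality (parabolic $\Omega$-pairs on higher-dimensional varieties, via the derived-functor machinery of the abelian category $\cP$). Your argument is a self-contained, curve-specific proof: reduce $\Ext^{1-i}$ to sheaf cohomology of $\cpHom$ via Lemma \ref{lem:Extordinarycohomology} (a reduction the paper also records, again from Yokogawa), apply ordinary Serre duality on $\PP^{1}$ to the locally free sheaf $\cpHom(E,F)\otimes\cO_{\PP^1}(n-2)$, and then prove the sheaf-level duality $\cspHom(F,E)\cong\cpHom(E,F)^{\vee}\otimes\cO_{\PP^1}(-n)$ by the trace pairing $(\phi,\psi)\mapsto\mathrm{tr}(\psi\circ\phi)$. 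The local computation you outline does go through: for instance when $0<a_i\le b_i$, the parabolic condition on $\phi$ is $\phi(V_i)\subset W_i$ while the strongly parabolic condition on $\psi$ is $\psi(F|_{p_i})\subset V_i$ and $\psi(W_i)=0$, and in adapted bases the pairing matrix of the two local lattices is $t$ times a permutation matrix, hence perfect into $\cO(-\sum_i p_i)$; the rank-one case (the one the paper actually uses in Proposition \ref{dim of centers}) works because the vanishing loci of $\cpHom(E,F)$ and $\cspHom(F,E)$ are cut out by complementary inequalities on the weights, so their divisors sum to $\sum_i p_i$. One caution: your argument requires the full Yokogawa definition of (strongly) parabolic morphisms, including the flag conditions you highlight ($\phi(V_i)\subset W_i$ when $a_i\le b_i$; $\psi(F|_{p_i})\subset V_i$ for strongly parabolic $\psi$); the paper's abbreviated definition in Section \ref{sec:deformation} records only the vanishing conditions $f(V_i)=0$, and with that literal definition the trace pairing is neither $\cO(-\sum_i p_i)$-valued nor perfect, so you are right to use the standard definitions rather than the paper's shorthand. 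In terms of trade-offs, the paper's citation covers all ranks and genera at once with no work, while your proof is elementary, makes transparent exactly where the twist $\cO_{\PP^1}(n-2)=\omega_{\PP^1}(\sum_i p_i)$ comes from, and suffices for every application in this paper.
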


\begin{theorem}[\protect{\cite[Theorem 2.4]{Yok95}}]\label{def of par bdl}
Let $(E, \{V_{i}\}, \vec{a})$ be a rank 2 parabolic bundle corresponding to a geometric point $x$ of $\cM(\vec{a})^{s}$. The Zariski tangent space of $\cM(\vec{a})^{s}$ at $x$ is naturally isomorphic to $\Ext^{1}(E, E)$. If $\Ext^{2}(E, E) = 0$, then $\cM(\vec{a})^{s}$ is smooth at $x$.
\end{theorem}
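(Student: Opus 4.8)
The plan is to run the standard tangent--obstruction theory for the deformation functor of $E$, carried out inside the abelian category $\cP$ of parabolic sheaves in which the points of $\cM(\vec{a})^{s}$ live. Since $\cM(\vec{a})^{s}$ coarsely represents the functor of flat families of stable parabolic bundles, the Zariski tangent space at $x = [E]$ is by definition the set of isomorphism classes of deformations of $E$ over the dual numbers $D = \spec\CC[\ve]/(\ve^{2})$, flat over $D$ and restricting to $E$ over the closed point. The goal is to identify this set, as a $\CC$-vector space, with $\Ext^{1}(E,E)$, and then to show that the obstructions to extending such deformations are governed by $\Ext^{2}(E,E)$.

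First I would classify the first-order deformations. A flat deformation $\tilde{E}$ over $D$ fits, by flatness (multiplication by $\ve$ has kernel and cokernel $E$), into a short exact sequence
\[
	0 \to E \to \tilde{E} \to E \to 0
\]
of objects of $\cP$. Since $\cP$ has enough injectives and $\Ext^{1}(E,E)$ is defined as the first right derived functor of $\pHom(E,-)$, this exhibits a natural bijection between the set of first-order deformations and $\Ext^{1}(E,E)$, compatible with the vector space structure (Baer sum of extensions corresponds to the sum of families). Because $E$ is stable, Proposition \ref{mor of stable par bdls} gives $\pEnd(E) = \CC\cdot\id$, so the only automorphisms of $E$ are scalars; these act trivially on $\Ext^{1}(E,E)$ by conjugation, so the identification is insensitive to the passage from the moduli stack to the coarse space. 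This yields the claimed natural isomorphism $T_{x}\cM(\vec{a})^{s}\cong\Ext^{1}(E,E)$.

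Next I would address smoothness via obstruction theory. For a small extension of Artinian local $\CC$-algebras $0\to(t)\to B'\to B\to 0$ and a deformation of $E$ over $B$, the obstruction to lifting it to $B'$ is a canonically defined class in $\Ext^{2}(E,E)\otimes(t)$, and the lift exists if and only if this class vanishes. Hence if $\Ext^{2}(E,E)=0$, every deformation lifts, the deformation functor is formally smooth, and its hull is a power series ring; since the tangent space has dimension $\dim\Ext^{1}(E,E)$, the variety $\cM(\vec{a})^{s}$ is smooth at $x$.

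The main obstacle is not any single computation but the careful transport of the deformation-theoretic machinery into the non-standard abelian category $\cP$: one must check that a flat family of parabolic \emph{bundles} over an Artinian base is the same datum as a flat \emph{object} of $\cP$ over that base restricting to $E$, so that deforming the pair (underlying bundle, flags $\{V_{i}\}$) coincides with deforming $E$ as an object of $\cP$, and the $\Ext$ groups of $\cP$ really are the relevant tangent and obstruction spaces. Once this dictionary is in place, the abelian-category formalism (enough injectives, together with Lemma \ref{lem:Extordinarycohomology} identifying $\Ext^{i}(E,E)$ with $\mathrm{H}^{i}(\cpEnd(E))$) supplies both the first-order classification and the obstruction class by the usual arguments. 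I would also record that over $\PP^{1}$ the hypothesis is automatic: $\cpEnd(E)$ is a coherent sheaf on a curve, so $\mathrm{H}^{2}(\cpEnd(E))=0$ and hence $\Ext^{2}(E,E)=0$ for every $E$, which is why the stable locus $\cM(\vec{a})^{s}$ is in fact smooth everywhere.
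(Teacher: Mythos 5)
The paper itself gives no proof of this statement: it is imported verbatim from Yokogawa (\cite[Theorem 2.4]{Yok95}), so your proposal can only be measured against the standard argument of that reference. Your outline does follow that standard route, and most of its steps are sound: flatness over $\spec\CC[\ve]/(\ve^{2})$ produces the self-extension sequence $0 \to E \to \tilde{E} \to E \to 0$ in the abelian category $\cP$, the Baer sum matches the linear structure on first-order deformations, obstructions to lifting along small extensions of Artinian rings live in $\Ext^{2}(E,E)$, and the automatic vanishing of $\Ext^{2}$ over a curve (via Lemma \ref{lem:Extordinarycohomology}) is exactly how the paper itself deduces Corollary \ref{smoothness of moduli}. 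You are also right that the dictionary between flat families of parabolic bundles and flat objects of $\cP$ is a point that must be checked and is supplied by Yokogawa's machinery.

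There is, however, one genuine gap: the opening claim that the Zariski tangent space of $\cM(\vec{a})^{s}$ at $x$ is \emph{by definition} the set of isomorphism classes of deformations of $E$ over the dual numbers. The space $\cM(\vec{a})^{s}$ is only a \emph{coarse} moduli space, and for a coarse space the natural map from the deformation functor to the set of morphisms $\spec\CC[\ve]/(\ve^{2}) \to \cM(\vec{a})^{s}$ need be neither injective nor surjective; taking it as a definition begs precisely the question the theorem answers, and it is where the real content of the cited result lies. Your observation that $\pEnd(E)=\CC\cdot\id$ and that scalars act trivially on $\Ext^{1}(E,E)$ is a necessary ingredient but not by itself sufficient. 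To close the gap one must either invoke the GIT construction of the moduli space --- a local universal family over a parameter scheme whose stable locus is a principal $\mathrm{PGL}_{2}$-bundle over $\cM(\vec{a})^{s}$, so that tangent spaces and formal smoothness descend along the quotient (this is how the cited reference argues) --- or show that the moduli stack of stable parabolic bundles is a $\mathbb{G}_{m}$-gerbe over the coarse space and that a gerbe induces isomorphisms on tangent spaces and preserves formal smoothness. Without one of these mechanisms, both the identification $T_{x}\cM(\vec{a})^{s}\cong\Ext^{1}(E,E)$ and the passage from formal smoothness of the deformation functor to smoothness of the variety remain asserted rather than proved.
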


\begin{corollary}\label{smoothness of moduli}
$\cM(\vec{a})^s$ is smooth. 
\end{corollary}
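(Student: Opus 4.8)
The plan is to deduce smoothness from the infinitesimal criterion recorded in Theorem \ref{def of par bdl}. That theorem identifies the Zariski tangent space at a point $x \in \cM(\vec{a})^s$ corresponding to a stable bundle $E$ with $\Ext^1(E,E)$, and it guarantees smoothness at $x$ as soon as the obstruction space $\Ext^2(E,E)$ vanishes. Since smoothness is a pointwise (local) property, it therefore suffices to prove that $\Ext^2(E,E) = 0$ for every stable parabolic bundle $E$ of degree $0$ and weight $\vec{a}$.

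To verify this vanishing, first I would invoke Lemma \ref{lem:Extordinarycohomology}, which identifies the parabolic Ext groups --- a priori computed as derived functors inside the auxiliary abelian category $\cP$ --- with ordinary sheaf cohomology of the parabolic Hom sheaf. Taking $E_{1} = E_{2} = E$ gives $\Ext^{2}(E,E) \cong H^{2}(\cpEnd(E))$. Now $\cpEnd(E)$ is a coherent sheaf on $\PP^{1}$, and $\PP^{1}$ is a curve, i.e.\ has dimension $1$. By Grothendieck's vanishing theorem the cohomology of any sheaf on a Noetherian space vanishes above the dimension of that space, so $H^{i}(\cpEnd(E)) = 0$ for all $i \ge 2$; in particular $H^{2}(\cpEnd(E)) = 0$ and hence $\Ext^{2}(E,E) = 0$. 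Combined with the preceding paragraph, this shows $\cM(\vec{a})^{s}$ is smooth at every point, and therefore smooth.

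The content of the argument is essentially bookkeeping: the only place where something could conceivably go wrong is the passage from the abstract obstruction group $\Ext^{2}$, living in $\cP$, to genuine sheaf cohomology on the curve, and this is exactly what Lemma \ref{lem:Extordinarycohomology} supplies. Once that identification is in hand, the vanishing is forced purely by the one-dimensionality of $\PP^{1}$, so I do not expect a genuine obstacle. I note that the Serre duality of Proposition \ref{Serre duality} is stated only in degrees $0$ and $1$ and so does not directly reach $\Ext^{2}$; the cohomological-dimension route through Lemma \ref{lem:Extordinarycohomology} is the clean way to control the top-degree group.
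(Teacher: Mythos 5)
Your proof is correct and is essentially identical to the paper's: both invoke Theorem \ref{def of par bdl} to reduce smoothness to the vanishing of $\Ext^{2}(E,E)$, use Lemma \ref{lem:Extordinarycohomology} to identify this with $\mathrm{H}^{2}(\cpEnd(E))$, and then conclude by Grothendieck vanishing since $\PP^{1}$ is a curve. Your additional remark that Serre duality (Proposition \ref{Serre duality}) cannot reach $\Ext^{2}$ directly is accurate but not needed.
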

\begin{proof}
By Lemma \ref{lem:Extordinarycohomology}, $\Ext^{2}(E, E) \cong \mathrm{H}^{2}(\cpHom(E, E))$. The latter cohomology is zero since it is an ordinary sheaf cohomology on a curve.
\end{proof}

\subsection{Wall crossing}\label{sec:wallcrossing}

We devote this subsection to showing how $\cM(\vec{a})$ changes when $\vec{a}$ varies. The birational map between $\cM(\vec{a})$ and $\cM(\vec{a}')$ with two adjacent parabolic weights $\vec{a}$ and $\vec{a}'$ is studied in \cite{BH95} and \cite[Section 7]{Tha96}. 

\begin{remark}
In \cite{BH95, Tha96}, the authors stated the result in the case that there is only one parabolic point. But the result is generalized to the case of an arbitrary number of parabolic points in a straightforward way. 
\end{remark}

\begin{definition}\label{def:effecivegeneralweights}
A parabolic weight $\vec{a} \in W$ is called \textbf{effective} if $\cM(\vec{a})^{s} \ne \emptyset$. An effective weight is called \textbf{general} if $\cM(\vec{a}) = \cM(\vec{a})^{s}$. 
\end{definition}

By Corollary \ref{smoothness of moduli}, for a general parabolic weight, $\cM(\vec{a})$ is smooth.

Let us study stability walls on $W$. Let $(E = \cO(k)\oplus \cO(-k),\{V_{i}\}, \vec{a})$ be a parabolic vector bundle over $\PP^{1}$ for some nonnegative $k$. If it is strictly semistable (hence it is on a wall), then there is a parabolic subbundle $(F = \cO(-m), \vec{b})$ such that $\mu(F) = \mu(E)$. Let $I = \{i \in \{1, \cdots,n\}\;|\;F|_{p_{i}} = V_{i}\}$. Then
\[
	-m + \sum_{i \in I}a_{i} = \mu(F) = \mu(E)
	= \frac{1}{2}\sum_{i=1}^{n}a_{i},
\]
so 
\[
	\mu(F) - \mu(E) = \sum_{i \in I}a_{i} - \sum_{i \in I^{c}}a_{i} = 2m.
\]
Therefore all stability walls are defined by
\begin{equation}\label{eqn:stabilitywall}
	\Delta_{I, m} = \{(a_{1}, \cdots, a_{n}) \in W\;|\;
	\sum_{i \in I}a_{i} - \sum_{i \in I^{c}}a_{i} = 2m\}.
\end{equation}

\begin{lemma}\label{lem:stabilitywall}
The space of positive parabolic weights $W$ is decomposed into finitely many chambers by walls $\Delta_{I, m}$ for $I \subset \{1, \cdots, n\}$ and $m \in \ZZ$.
\end{lemma}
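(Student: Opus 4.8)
The plan is to show that the walls $\Delta_{I,m}$ are genuine hyperplane sections of $W$ that intersect it in only finitely many distinct pieces, and that only finitely many such walls actually meet $W$. First I would observe that each wall $\Delta_{I,m}$ is the intersection with $W$ of an affine hyperplane $H_{I,m}$ in $\RR^n$ defined by the linear equation $\sum_{i \in I} a_i - \sum_{i \in I^c} a_i = 2m$. Since $W = (0,1)^n \cap \QQ^n$ (the interior of $\overline{W}$), every coordinate $a_i$ satisfies $0 < a_i < 1$, so the linear functional $\ell_I(\vec{a}) = \sum_{i \in I} a_i - \sum_{i \in I^c} a_i$ is bounded: it takes values strictly between $-n$ and $n$ on $W$. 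Consequently $H_{I,m}$ can meet $W$ only when $2m \in (-n, n)$, i.e.\ for $m$ ranging over the finitely many integers with $|m| < n/2$.

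Next I would count the index sets. There are only $2^n$ subsets $I \subseteq \{1, \cdots, n\}$, and for each of these only the finitely many integer values of $m$ identified above can produce a wall meeting $W$. Hence the total collection $\{\Delta_{I,m} \ :\ \Delta_{I,m} \cap W \ne \emptyset\}$ is finite, being indexed by a subset of the finite set $\{(I,m)\ :\ I \subseteq [n],\ m \in \ZZ,\ |m| < n/2\}$. The complement in $W$ of this finite union of hyperplane pieces is then a finite disjoint union of connected open chambers: each chamber is cut out by fixing, for every pair $(I,m)$ in the finite list, whether $\ell_I(\vec{a}) - 2m$ is positive or negative, and there are only finitely many consistent sign patterns. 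This is the standard fact that a finite hyperplane arrangement in a convex region decomposes that region into finitely many chambers.

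The main point to be careful about — and the step I expect to require the most attention rather than any deep difficulty — is verifying that the walls listed in \eqref{eqn:stabilitywall} really do capture \emph{all} the stability walls, so that the decomposition into chambers is the correct one for the variation of $\cM(\vec{a})$. The derivation preceding the lemma shows that strict semistability of a parabolic bundle $E = \cO(k) \oplus \cO(-k)$ forces a destabilizing parabolic line subbundle $F = \cO(-m)$ with $\mu(F) = \mu(E)$, and that this condition is exactly $\ell_I(\vec{a}) = 2m$ for $I$ the set of points where $F|_{p_i} = V_i$. I would note that every rank $2$ bundle on $\PP^1$ splits as $\cO(k)\oplus\cO(-k)$ (after the degree-$0$ normalization), so this analysis is exhaustive: any wall arises from such a splitting and a choice of subbundle slope, giving precisely a pair $(I,m)$. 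Thus the arrangement $\{\Delta_{I,m}\}$ is complete, and combining this with the finiteness argument above yields that $W$ is divided into finitely many chambers, which is the assertion of Lemma \ref{lem:stabilitywall}.
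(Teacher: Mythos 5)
Your proof is correct and follows essentially the same reasoning the paper leaves implicit: the derivation preceding the lemma already identifies all walls as the loci $\Delta_{I,m}$, and since $W \subset (0,1)^n$ forces $|2m| < n$ while there are only $2^n$ choices of $I$, the arrangement is finite and cuts $W$ into finitely many chambers. Your added care about exhaustiveness of the walls (via the Grothendieck splitting $\cO(k)\oplus\cO(-k)$) is exactly the content of the paragraph preceding the lemma in the paper, so nothing is missing.
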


Note that $\Delta_{I, m} = \Delta_{I^{c}, -m}$. 

Next, we see what parabolic bundles become unstable as we cross a stability wall. It suffices to analyze the change under a simple wall-crossing along the relative interior of a wall. Choose a general point $\vec{a}$ in $\Delta_{I, m}$. Let $\Delta_{I,m}^+$ and $\Delta_{I,m}^-$ be small neighborhoods of at $\vec{a}$ in
\[
	\{(b_{1},\cdots,b_{n})\in W\;|\;
	\sum_{i\in I}b_{i}-\sum_{i\in I^c}b_{i}>2m\}
	\quad \mathrm{ and }\quad
	\{(b_{1},\cdots,b_{n})\in W\;|\;
	\sum_{i\in I}b_{i}-\sum_{i\in I^c}b_{i}<2m\}
\]
respectively. The stability coincide with the semistability on $\Delta_{I,m}^{\pm}$. A parabolic bundle is \textbf{$\Delta_{I,m}^{+}$-stable} (resp. \textbf{$\Delta_{I,m}^{-}$-stable}) if it is stable with respect to parabolic weights in $\Delta_{I,m}^{+}$ (resp. $\Delta_{I,m}^{-}$). We look for parabolic bundles which are $\Delta_{I,m}^-$-stable but $\Delta_{I,m}^+$-unstable. 

\begin{lemma}\label{max dest}
If $(\cO(k)\oplus\cO(-k),\{V_i\})$ is $\Delta_{I,m}^-$-stable but $\Delta_{I,m}^+$-unstable, then any destabilizing subbundle is of the form $(\cO(-m),\vec{b})$ and $I=I_{\vec{b}} := \{i \in[n]\;|\;\cO(-m)|_{p_i}=V_{i}\}$. 
\end{lemma}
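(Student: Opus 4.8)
The plan is to exploit the fact that, for a fixed line subbundle, the difference of parabolic slopes is an affine-linear function of the weight, and then to use crucially that $\vec{a}$ is a \emph{general} point of the wall $\Delta_{I,m}$.

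First I would fix a destabilizing parabolic subbundle for weights in $\Delta_{I,m}^{+}$. Since $E = \cO(k)\oplus\cO(-k)$ has rank $2$, such a subbundle is a parabolic line bundle $(F,\vec{b})$ with $F = \cO(-m')$ for some integer $m'$, and I set $J := \{i \in [n] \mid F|_{p_i} = V_{i}\}$. Note that $J$ and $m'$ depend only on the inclusion $F \hookrightarrow E$ and the flags $\{V_{i}\}$, not on the choice of weight, so the same subbundle is available throughout a neighborhood of $\vec{a}$. Using the slope computation preceding \eqref{eqn:stabilitywall}, I would record the affine-linear function
\[
\delta_{F}(\vec{x}) = \mu(F) - \mu(E) = -m' + \tfrac{1}{2}\Big(\sum_{i \in J} x_{i} - \sum_{i \in J^{c}} x_{i}\Big),
\]
whose zero locus is precisely the wall $\Delta_{J, m'}$.

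Next I would extract a sign change. Because $F$ destabilizes $E$ for weights in $\Delta_{I,m}^{+}$, we have $\delta_{F} > 0$ there; because $E$ is $\Delta_{I,m}^{-}$-stable, every proper parabolic subbundle (in particular $F$) satisfies $\delta_{F} < 0$ on $\Delta_{I,m}^{-}$. As $\Delta_{I,m}^{\pm}$ are small neighborhoods of $\vec{a}$ in the two open half-spaces and $\delta_{F}$ is continuous, this forces $\delta_{F}(\vec{a}) = 0$, i.e. $\vec{a} \in \Delta_{J,m'}$. Since $\vec{a}$ is a general point of $\Delta_{I,m}$, it can lie on a stability wall only if that wall coincides with $\Delta_{I,m}$: there are finitely many walls by Lemma~\ref{lem:stabilitywall} (as $J$ ranges over subsets of $[n]$, while $m'$ is bounded because $F$ is a subbundle of $\cO(k)\oplus\cO(-k)$ and the weights lie in the bounded region $\overline{W}$), so the union of the other walls meets $\Delta_{I,m}$ in a proper closed subset that $\vec{a}$ avoids. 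Hence $\Delta_{J,m'} = \Delta_{I,m}$ as affine hyperplanes.

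Finally I would convert this coincidence of hyperplanes into the stated equalities. Writing $\ell_{I,m}(\vec{x}) := -m + \tfrac{1}{2}\big(\sum_{i\in I}x_{i} - \sum_{i\in I^{c}}x_{i}\big)$, the functions $\delta_{F}$ and $\ell_{I,m}$ cut out the same hyperplane, so $\delta_{F} = \lambda\,\ell_{I,m}$ for some $\lambda \neq 0$. Comparing the coefficient of each $x_{i}$, which is $\pm\tfrac{1}{2}$ in both, forces $\lambda = \pm 1$; and since both $\delta_{F}$ and $\ell_{I,m}$ are positive on $\Delta_{I,m}^{+}$ we must have $\lambda = 1$. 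Matching the coefficients then gives $J = I$, and matching the constant terms gives $m' = m$, so $F \cong \cO(-m)$ and $I = I_{\vec{b}}$, as claimed. I expect the only delicate point to be the genericity step, where one must be sure that ``general point of the wall'' genuinely excludes every other stability wall through $\vec{a}$; the remaining sign bookkeeping that pins down $J = I$ rather than $I^{c}$ and $m' = m$ rather than $-m$ (recall $\Delta_{I,m} = \Delta_{I^{c},-m}$) is then routine.
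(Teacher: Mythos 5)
Your proof is correct and follows essentially the same route as the paper's: both fix the destabilizing line subbundle $\cO(-m')$, note that its slope-difference is an affine-linear function of the weight that is positive on $\Delta_{I,m}^{+}$ and negative on $\Delta_{I,m}^{-}$, conclude that its zero hyperplane $\Delta_{J,m'}$ must coincide with $\Delta_{I,m}$, and then match the data to get $m'=m$ and $I=I_{\vec{b}}$. The only cosmetic differences are that you pass through continuity, genericity of $\vec{a}$, and finiteness of the walls to identify the two hyperplanes, whereas the paper reads the coincidence directly off the two-sided inclusions $\Delta_{I,m}^{\pm}\subset\Delta_{I_{\vec{b}},m'}^{\pm}$, and your explicit $\lambda=\pm1$ bookkeeping spells out the final sign step that the paper leaves implicit.
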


\begin{proof}
Since $(\cO(k)\oplus\cO(-k),\{V_i\})$ is $\Delta_{I,m}^+$-unstable, we have a destabilizing subbundle $\cO(-m')$ of $(\cO(k)\oplus\cO(-k),\{V_i\})$ such that 
\[
	-m' +  \sum_{i\in I_{\vec{b}}}a_{i} > \frac{1}{2}\sum_{i=1}^{n}a_{i}
\]
for any $\vec{a}\in\Delta_{I,m}^+$. Since $(\cO(k)\oplus\cO(-k),\{V_i\})$ is $\Delta_{I,m}^-$-stable,
\[
	-m' +  \sum_{i\in I_{\vec{b}}}a_{i} < \frac{1}{2}\sum_{i=1}^{n}a_{i}
\]
for any $\vec{a}\in\Delta_{I,m}^-$. Then $\Delta_{I,m}^-\subset\Delta_{I_{\vec{b}},m'}^-$ and $\Delta_{I,m}^+\subset\Delta_{I_{\vec{b}},m'}^+$. Hence $\Delta_{I,m}=\Delta_{I_{\vec{b}},m'}$. Thus $m'=m$ and $I = I_{\vec{b}}$.
\end{proof}
 
The uniqueness of the maximal destabilizing subbundle can be shown as in the case of ordinary bundles. 

Suppose that $\vec{a}$ is a general point of $\Delta_{I, m}$. Let $\vec{a}^{-}$ (resp. $\vec{a}^{+}$) be in $\Delta_{I, m}^{-}$ (resp. $\Delta_{I, m}^{+}$). Assume that both $\cM(\vec{a}^{-})$ and $\cM(\vec{a}^{+})$ are nonempty. Since $\vec{a}^{\pm}$ are general parabolic weights, $\cM(\vec{a}^{\pm})$ are smooth by Corollary \ref{smoothness of moduli}. There are two birational morphisms
\[
	\xymatrix{\cM(\vec{a}^{-}) \ar[rd]^{\phi^{-}} &&
	\cM(\vec{a}^{+}) \ar[ld]_{\phi^{+}}\\
	& \cM(\vec{a}).}
\]
The image $Y$ of the exceptional locus of $\phi^{\pm}$ is the locus parameterizes S-equivalent classes of $(\cO(-m), \vec{b}) \oplus (\cO(m), \vec{c})$, where $(\cO(-m), \vec{b})$ is the destabilizing bundle for $\Delta_{I, m}^{+}$ and $\vec{c} = \vec{a} - \vec{b}$. The moduli of parabolic line bundles of a fixed degree on $\PP^{1}$ is a point because there is a unique line bundle for each degree. So $Y$ is always a single point. For the same $I$, define $\vec{b}^{\pm}$ and $\vec{c}^{\pm}$ by using $\vec{a}^{\pm}$. The exceptional fiber of $\phi^{-}$ (resp. $\phi^{+}$) is a projective space $Y^{-} := \PP \Ext^{1}((\cO(m), \vec{c}^{-}), (\cO(-m), \vec{b}^{-}))$ (resp. $Y^{+} := \PP \Ext^{1}((\cO(-m), \vec{b}^{+}), (\cO(m), \vec{c}^{+}))$). 

Using Proposition \ref{ext of par line bdls} and arguments in \cite[Section 5]{Tha02}, we can see that $Y^{-}\cong\cM(\vec{a}^{-})\setminus\cM(\vec{a}^{+})$ and $Y^{+}\cong\cM(\vec{a}^{+})\setminus\cM(\vec{a}^{-})$.

\begin{proposition}[\protect{\cite[Section 7]{Tha96}}]\label{prop:wallcrossing}
The blow-up of $\cM(\vec{a}^{-})$ along $Y^{-}$ is isomorphic to the blow-up of $\cM(\vec{a}^{+})$ along $Y^{+}$. In particular, $\dim Y^{-} + \dim Y^{+} = \dim \cM(\vec{a}) - 1$. 
\end{proposition}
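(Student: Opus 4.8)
The plan is to establish the two claimed facts separately: first the isomorphism of the two blow-ups, and then the dimension count as a formal consequence. The key structural input, already assembled in the excerpt, is that the wall-crossing is a \emph{local} modification concentrated over the single point $Y \in \cM(\vec{a})$. Since $\phi^{-} : \cM(\vec{a}^{-}) \to \cM(\vec{a})$ and $\phi^{+} : \cM(\vec{a}^{+}) \to \cM(\vec{a})$ are both isomorphisms away from $Y$ (indeed $\cM(\vec{a}^{-}) \setminus Y^{-} \cong \cM(\vec{a}) \setminus Y \cong \cM(\vec{a}^{+}) \setminus Y^{+}$), the entire content lies in comparing the two exceptional fibers and the way they sit inside the smooth ambient spaces. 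I would begin by recording that both $\cM(\vec{a}^{\pm})$ are smooth (Corollary \ref{smoothness of moduli}) and that $Y^{\pm}$ are the projective spaces $\PP\Ext^{1}$ described just above the statement.

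First I would identify $Y^{\pm}$ as smooth projective subvarieties and compute their normal bundles. The standard deformation-theoretic description (Theorem \ref{def of par bdl}, Lemma \ref{lem:Extordinarycohomology}, and Serre duality in Proposition \ref{Serre duality}) lets one read off the normal directions to $Y^{-}$ inside $\cM(\vec{a}^{-})$ as the deformations of the extension $(\cO(-m),\vec{b}^{-}) \to G \to (\cO(m),\vec{c}^{-})$ that break the subbundle structure, i.e. as $\Ext^{1}$ in the \emph{opposite} direction, $\Ext^{1}((\cO(-m),\vec{b}^{-}),(\cO(m),\vec{c}^{-}))$, which is exactly the space whose projectivization is $Y^{+}$. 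Symmetrically, the normal bundle of $Y^{+}$ in $\cM(\vec{a}^{+})$ is governed by $Y^{-}$. Concretely, I expect to find that $\cM(\vec{a}^{-})$ near $Y^{-}$ looks like the total space of a projective bundle, and that blowing up $Y^{-}$ replaces it by the exceptional divisor $\PP(N_{Y^{-}/\cM(\vec{a}^{-})})$, which is a $\PP^{\dim Y^{+}}$-bundle over $Y^{-} = \PP^{\dim Y^{-}}$.

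Next I would match the two blow-ups. The cleanest route is to exhibit a common space. Over the locus in $\cM(\vec{a})$ near $Y$, the two extension spaces $\Ext^{1}((\cO(m),\vec{c}),(\cO(-m),\vec{b}))$ and $\Ext^{1}((\cO(-m),\vec{b}),(\cO(m),\vec{c}))$ are Serre-dual, and the blow-up of either side produces the exceptional divisor $\PP(\Ext^{1}) \times \PP((\Ext^{1})^{\vee})$ (a product of the two projective spaces $Y^{-}$ and $Y^{+}$), which is manifestly symmetric in the two weights. This is the mechanism behind Thaddeus' result in \cite[Section 7]{Tha96} and \cite[Section 5]{Tha02}: blowing up the $\Delta^{-}$-stable model along $Y^{-}$ and blowing up the $\Delta^{+}$-stable model along $Y^{+}$ yield isomorphic varieties because each has the same exceptional divisor $Y^{-} \times Y^{+}$ and agrees with the common open set elsewhere. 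I would assemble this by checking the isomorphism on the complement of the exceptional loci (where it is the identity on $\cM(\vec{a}) \setminus Y$) and then extending it across, using that both exceptional divisors are identified with the same product and normal-bundle data; smoothness of everything in sight guarantees the glued map is an isomorphism rather than merely a bijection.

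Finally, the dimension statement follows formally. Once we know the common blow-up has exceptional divisor $Y^{-} \times Y^{+}$, this divisor has dimension $\dim Y^{-} + \dim Y^{+}$, and being a divisor in a smooth variety of dimension $n-3 = \dim \cM(\vec{a})$ it must have dimension $\dim \cM(\vec{a}) - 1$; equating gives $\dim Y^{-} + \dim Y^{+} = \dim \cM(\vec{a}) - 1$. Alternatively one computes both dimensions directly from $\dim \Ext^{1}$ via Serre duality and a Riemann--Roch / Euler-characteristic count, using that $Y$ is a single point so the base contributes nothing. \textbf{The main obstacle} I anticipate is the normal-bundle computation and the verification that the blow-up genuinely produces the product $Y^{-} \times Y^{+}$ with matching normal data on both sides; this requires care with the parabolic $\Ext$ groups and strongly-parabolic Serre duality, and is where the hypothesis that both $\cM(\vec{a}^{\pm})$ are nonempty and smooth is essential. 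Given that the underlying comparison is precisely the one carried out by Thaddeus, however, I would lean on \cite{Tha96, Tha02} for the structural identification and devote the new work to checking that the parabolic setup over $\PP^{1}$ satisfies the same hypotheses.
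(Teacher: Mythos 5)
The paper offers no independent proof of this proposition; it is quoted directly from \cite[Section 7]{Tha96}, and your overall framework (a common blow-up with exceptional divisor $Y^{-}\times Y^{+}$, followed by a formal dimension count) is exactly the mechanism of that reference. However, one step you describe as ``the cleanest route'' would fail as stated: the two extension spaces $W^{-}:=\Ext^{1}((\cO(m),\vec{c}),(\cO(-m),\vec{b}))$ and $W^{+}:=\Ext^{1}((\cO(-m),\vec{b}),(\cO(m),\vec{c}))$ are \emph{not} Serre-dual to each other. Parabolic Serre duality (Proposition \ref{Serre duality}) pairs $\Ext^{1}(E,F\otimes\cO_{\PP^1}(n-2))$ with $\mathrm{H}^{0}(\cspHom(F,E))^{\vee}$, i.e.\ with \emph{strongly parabolic} homomorphisms of a twisted pair, not with the opposite $\Ext^{1}$. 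Indeed, Proposition \ref{dim of centers} gives $\dim W^{-}=2m+n-1-|I|$, and the same computation with $(I,m)$ replaced by $(I^{c},-m)$ gives $\dim W^{+}=|I|-2m-1$; these sum to $n-2$ but are unequal for a general wall, so no duality between them can exist. Consequently the common exceptional divisor is $\PP(W^{-})\times\PP(W^{+})=Y^{-}\times Y^{+}$, and not $\PP(W)\times\PP(W^{\vee})$ for a single space $W$; the latter would have dimension $2\dim W-2$ and, being a divisor, would force $\dim W^{-}=\dim W^{+}$, contradicting the formulas above.

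The good news is that this error is not load-bearing, because your first paragraph already contains the correct route. The normal bundle of $Y^{-}$ in $\cM(\vec{a}^{-})$ has fiber $W^{+}$ (up to a twist by $\cO_{Y^{-}}(-1)$), and symmetrically for $Y^{+}$, so blowing up $Y^{-}$ yields an exceptional divisor $\PP(N_{Y^{-}/\cM(\vec{a}^{-})})\cong Y^{-}\times Y^{+}$ that matches the one obtained from the other side; gluing this identification with the canonical isomorphism $\cM(\vec{a}^{-})\setminus Y^{-}\cong\cM(\vec{a})\setminus Y\cong\cM(\vec{a}^{+})\setminus Y^{+}$ gives the isomorphism of blow-ups. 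The dimension statement then follows either by your formal argument or directly from $\codim Y^{-}=\dim W^{+}=\dim Y^{+}+1$. With that repair, your proposal coincides with Thaddeus' argument, which is all the paper itself invokes.
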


We will use the following dimension computation later. 

\begin{proposition}\label{dim of centers}
Let $\vec{a}^{-}$ be a general point of $\Delta_{I, m}^{-}$. Then 
\[
	\dim \Ext^1((\cO(m),\vec{c}^{-}),(\cO(-m),\vec{b}^{-})) 
	= 2m+n-1-|I|.
\]
\end{proposition}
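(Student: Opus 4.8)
The plan is to reduce the computation of $\Ext^1$ to an ordinary sheaf-cohomology computation on $\PP^1$ via Lemma~\ref{lem:Extordinarycohomology}, and then to identify the parabolic Hom sheaf between these two line bundles explicitly as an ordinary line bundle on $\PP^1$. First, by Lemma~\ref{lem:Extordinarycohomology} we have a natural isomorphism
\[
	\Ext^1((\cO(m),\vec{c}^{-}),(\cO(-m),\vec{b}^{-})) \cong
	\mathrm{H}^1\!\left(\cpHom((\cO(m),\vec{c}^{-}),(\cO(-m),\vec{b}^{-}))\right),
\]
so it suffices to compute the first cohomology of the parabolic Hom sheaf.

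The heart of the argument is to pin down this parabolic Hom sheaf. Since both factors are line bundles, the underlying sheaf of bundle homomorphisms is $\cHom(\cO(m),\cO(-m)) = \cO(-2m)$, and the parabolic Hom sheaf is the subsheaf cut out by the parabolic condition $f(V_i)=0$ at each $p_i$ where the source weight strictly exceeds the target weight, i.e.\ where $c_i^{-} > b_i^{-}$. Here I would use that $\vec{b}^{-}$ and $\vec{c}^{-}$ are complementary: for $i \in I = I_{\vec{b}}$ we have $b_i^{-} = a_i \ne 0$ and $c_i^{-} = 0$, so no vanishing is imposed, whereas for $i \in I^c = I_{\vec{c}}$ we have $c_i^{-} = a_i \ne 0$ and $b_i^{-} = 0$, so every parabolic homomorphism must vanish at $p_i$. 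Since each such condition is a simple order-one vanishing (the flags are trivial in rank $1$), this identifies
\[
	\cpHom((\cO(m),\vec{c}^{-}),(\cO(-m),\vec{b}^{-})) \cong
	\cO\!\left(-2m - \textstyle\sum_{i \in I^c} p_i\right) \cong \cO(-2m - (n - |I|)).
\]

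Finally, I would invoke the standard cohomology of line bundles on $\PP^1$, namely $\dim \mathrm{H}^1(\cO(d)) = -d-1$ for $d \le -1$. Applying this with $d = -2m - n + |I|$ yields $\dim \mathrm{H}^1 = 2m + n - 1 - |I|$, which is the claimed formula. One should note that $d \le -1$ holds throughout the relevant wall-crossing range: when $m \ge 1$ we automatically have $d \le -2m \le -2$, and when $m = 0$ the weight $\vec{c}^{-}$ is nontrivial so $I^c \ne \emptyset$, forcing $d = -|I^c| \le -1$.

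The main obstacle is the second step: correctly determining the parabolic structure of the Hom sheaf, in particular establishing that the parabolic condition forces vanishing at exactly the $n-|I|$ points indexed by $I^c$ and at no others. This is precisely where the direction of the inequality in the definition of a parabolic morphism and the complementarity of $\vec{b}^{-}$ and $\vec{c}^{-}$ must be tracked carefully; once the sheaf is identified as $\cO(-2m - n + |I|)$, the remaining cohomology computation is routine.
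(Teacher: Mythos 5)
Your proof is correct, but it takes a genuinely different route from the paper's. You work on the side of the parabolic Hom sheaf: invoking Lemma \ref{lem:Extordinarycohomology} to write $\Ext^1((\cO(m),\vec{c}^{-}),(\cO(-m),\vec{b}^{-})) \cong \mathrm{H}^1(\cpHom((\cO(m),\vec{c}^{-}),(\cO(-m),\vec{b}^{-})))$, you then use the complementarity of $\vec{b}^{-}$ and $\vec{c}^{-}$ to identify that sheaf as the line bundle $\cO(-2m-(n-|I|))$ (vanishing imposed exactly at the $|I^{c}|$ points where the source weight $c_i^{-}=a_i$ strictly exceeds the target weight $0$), and finish with $\dim \mathrm{H}^1(\PP^1,\cO(d)) = -d-1$ for $d \le -1$; your verification that $d \le -1$ holds in the relevant range (including the $m=0$ walls, where $I^{c} \ne \emptyset$) closes the one edge case. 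The paper instead first applies parabolic Serre duality (Proposition \ref{Serre duality}) to convert the $\Ext^1$ into the dual of the \emph{strongly} parabolic Hom space $\spHom((\cO(-m-(n-2)),\vec{b}^{-}),(\cO(m),\vec{c}^{-}))$, then compares $\cspHom$ with $\cHom$ via a short exact sequence with skyscraper quotient, proves the needed $\mathrm{H}^1$-vanishing of $\cspHom$ by a second application of Serre duality together with the stability input of Proposition \ref{mor of stable par bdls}, and finally counts $\dim \mathrm{H}^0(\cO(2m+n-2)) - |I| = 2m+n-1-|I|$. Your argument is shorter and more elementary --- it needs no duality and no stability hypothesis, only the rank-one observation that a parabolic Hom sheaf between parabolic line bundles is again a line bundle of computable degree --- whereas the paper's dualization has the advantage of turning the problem into an $\mathrm{H}^0$-count whose independence of conditions is certified by the stability-based vanishing, a pattern that adapts more readily to situations where the Hom sheaf is not a line bundle and its $\mathrm{H}^1$ cannot simply be read off from a degree.
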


\begin{proof}
By Proposition \ref{Serre duality}, we have natural isomorphisms
\[
	\Ext^1((\cO(m),\vec{c}^{-}),(\cO(-m),\vec{b}^{-}))\cong
	\spHom((\cO(-m-(n-2)),\vec{b}^{-}),(\cO(m),\vec{c}^{-}))^{\vee}.
\]
Consider the following short exact sequence of sheaves
\[
	0\to\cspHom((\cO(-m-(n-2)),\vec{b}^{-}),(\cO(m),\vec{c}^{-}))\to
	\cHom((\cO(-m-(n-2)),\vec{b}^{-}),(\cO(m),\vec{c}^{-}))
\]
\[
	\to\frac{\bigoplus_{i=1}^{n}\Hom((\cO(-m-(n-2)),\vec{b}^{-})|_{p_i},
	(\cO(m),\vec{c}^{-})|_{p_i})}
	{\bigoplus_{i=1}^{n}N_{p_i}((\cO(-m-(n-2)),\vec{b}^{-}),
	(\cO(m),\vec{c}^{-}))}\to 0
\]
where $N_p((\cO(k),\vec{x}),(\cO(\ell),\vec{y}))$ is the subspace of strictly parabolic maps in $\Hom((\cO(k),\vec{x})|_{p},(\cO(\ell),\vec{y})|_{p})$ at a point $p\in\PP^1$. For $\vec{a}^{-} \in \Delta_{I, m}^{-}$, $\mu(\cO(m), \vec{c}^{-}) > \mu(\cO(-m), \vec{b}^{-})$. Thus
\[
	\mathrm{H}^1(\cspHom((\cO(-m-(n-2)),\vec{b}^{-}),(\cO(m),\vec{c}^{-})))
	= \Ext^{0}((\cO(m),\vec{c}^{-}),(\cO(-m),\vec{b}^{-}))^{\vee}
\]
\[
	=\pHom((\cO(m),\vec{c}^{-}),(\cO(-m),\vec{b}^{-}))^{\vee}
	=0
\]
by Proposition \ref{mor of stable par bdls}. Hence we have a short exact sequence of vector spaces
\[
	0\to\spHom((\cO(-m-(n-2)),\vec{b}^{-}),(\cO(m),\vec{c}^{-}))\to
	\Hom((\cO(-m-(n-2)),\vec{b}^{-}),(\cO(m),\vec{c}^{-}))
\]
\[
	\to\frac{\oplus_{i=1}^{n}\Hom((\cO(-m-(n-2)),\vec{b}^{-})|_{p_i},
	(\cO(m),\vec{c}^{-})|_{p_i})}{\oplus_{i=1}^{n}N_{p_i}
	((\cO(-m-(n-2)),\vec{b}^{-}),(\cO(m),\vec{c}^{-}))}\to 0.
\]
Since
\[
	\dim N_{p_i}((\cO(-m-(n-2)),\vec{b}^{-}),(\cO(m),\vec{c}^{-}))
	=\left\{\begin{matrix}0,&i\in I\\1,&i\in I^c\end{matrix}\right.,
\]
\[
	\dim\spHom((\cO(-m-(n-2)),\vec{b}^{-}),(\cO(m),\vec{c}^{-}))
	=\dim\Hom((\cO(-m-(n-2)),\vec{b}^{-}),(\cO(m),\vec{c}^{-}))-|I|
\]
\[
	=\dim \mathrm{H}^0(\cO(2m+n-2))-|I|=2m+n-1-|I|.
\]
\end{proof}

\section{Elementary GIT quotients and the moduli space of parabolic bundles}\label{sec:GIT}

The ring of invariants of a product of projective lines have been studied since the 19th century. In this section, we review some of the classical results and its relation with moduli spaces of rank 2 parabolic vector bundles on $\PP^{1}$. For the basic of GIT, consult \cite{GIT94}.

\subsection{The GIT quotient of a product of projective lines}

Fix $n \ge 3$. For $\vec{a} = (a_{1}, \cdots, a_{n}) \in \QQ_{> 0}^{n}$, consider an ample $\QQ$-line bundle $L := \cO(a_{1}, \cdots, a_{n})$ on $(\PP^{1})^{n}$. On $(\PP^{1})^{n}$, $\SL_{2}$ acts diagonally. We can take the GIT quotient with respect to $L$,
\[
	(\PP^{1})^{n}\git_{L}\SL_{2} := \proj \bigoplus_{m \ge 0}
	H^{0}((\PP^{1})^{n}, \lfloor L^{m}\rfloor)^{\SL_{2}}.
\]

Conditions for the (semi)stability of $(\PP^{1})^{n}$ with respect to $L$ are described in the following theorem. We denote the stable (resp. semistable) locus by $((\PP^{1})^{n})^{s}$ (resp. $((\PP^{1})^{n})^{ss}$). 

\begin{theorem}[\protect{\cite[Proposition 3.4]{GIT94}}]\label{thm:stability}
Let $L = \cO(a_{1}, \cdots, a_{n})$ be a $\QQ$-linearization. Let $a := \sum_{i=1}^{n}a_{i}$. For a point $x := (x_{1}, \cdots, x_{n}) \in (\PP^{1})^{n}$, $x \in ((\PP^{1})^{n})^{ss}$ (resp. $x \in ((\PP^{1})^{n})^{s}$) if and only if for any $y \in \PP^{1}$, 
\[
	\sum_{x_{i}=y}a_{i} \le a/2 \;(\mbox{resp.} < a/2).
\]
\end{theorem}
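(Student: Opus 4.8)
The plan is to invoke the Hilbert--Mumford numerical criterion and reduce the weight computation to a single maximal torus, where it splits as a sum over the $n$ factors. Since (semi)stability depends only on the ray $\QQ_{>0}\cdot L$, I first clear denominators and assume $a_{1}, \cdots, a_{n} \in \ZZ_{>0}$, so that $L$ is an honest $\SL_{2}$-linearized ample line bundle. By the numerical criterion, $x$ is semistable (resp. stable) if and only if $\mu^{L}(x, \lambda) \ge 0$ (resp. $> 0$) for every nontrivial one-parameter subgroup $\lambda \colon \CC^{*} \to \SL_{2}$, where $\mu^{L}$ is Mumford's numerical invariant, defined through the weight of the induced $\CC^{*}$-action on the fiber of $L$ over the limit point $x_{0} = \lim_{t \to 0}\lambda(t)\cdot x$.

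Next I would reduce to the diagonal torus. Since $\mu^{L}(g\cdot x, g\lambda g^{-1}) = \mu^{L}(x, \lambda)$ and every nontrivial one-parameter subgroup of $\SL_{2}$ is conjugate to one of the form $\lambda_{r}(t) = \mathrm{diag}(t^{r}, t^{-r})$ with $r > 0$, it suffices to evaluate $\mu^{L}$ on such subgroups and their $\SL_{2}$-conjugates. A subgroup $\lambda_{r}$ has two fixed points on $\PP^{1}$; write $y$ for the repelling one, so that every $x_{i} \ne y$ satisfies $\lim_{t\to 0}\lambda_{r}(t)\cdot x_{i} = y'$, the attracting fixed point, while the points with $x_{i} = y$ stay at $y$. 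Allowing $\SL_{2}$-conjugation lets the repelling point $y$ range over all of $\PP^{1}$, and the Weyl element swapping the two fixed points accounts for both orientations.

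Then I would compute the weight. Because $L = \cO(a_{1}, \cdots, a_{n})$ is an external tensor product and $\SL_{2}$ acts diagonally, the numerical invariant splits as $\mu^{L}(x, \lambda_{r}) = \sum_{i=1}^{n} a_{i}\,\mu^{\cO(1)}(x_{i}, \lambda_{r})$. A direct computation of the $\CC^{*}$-weight on the fiber of $\cO(1)$ at each fixed point gives $\mu^{\cO(1)}(x_{i}, \lambda_{r}) = -r$ when $x_{i} = y$ and $+r$ otherwise. Writing $s_{y} := \sum_{x_{i} = y}a_{i}$, this yields
\[
	\mu^{L}(x, \lambda_{r}) = r\Big( \sum_{x_{i} \ne y}a_{i} - \sum_{x_{i}=y}a_{i} \Big) = r\,(a - 2 s_{y}).
\]
Since $r > 0$ and $y$ ranges over all of $\PP^{1}$, the inequalities $\mu^{L}(x, \lambda) \ge 0$ (resp. $> 0$) for all $\lambda$ become precisely $s_{y} \le a/2$ (resp. $< a/2$) for all $y \in \PP^{1}$, which is the asserted criterion; note that only the finitely many $y \in \{x_{1}, \cdots, x_{n}\}$ impose a nontrivial constraint, as $s_{y} = 0$ otherwise.

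The step requiring the most care is the weight computation together with its sign bookkeeping: one must correctly identify which of the two fixed points is repelling under $t \to 0$, determine the sign of the $\CC^{*}$-weight on the fiber of $\cO(1)$ at each fixed point, and match these against the chosen normalization of Mumford's invariant $\mu^{L}$, so that the destabilizing one-parameter subgroups are exactly those concentrating more than half of the total weight $a$ at a single point of $\PP^{1}$.
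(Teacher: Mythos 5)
Your proof is correct: the reduction to diagonal one-parameter subgroups, the additivity $\mu^{L}(x,\lambda)=\sum_{i}a_{i}\mu^{\cO(1)}(x_{i},\lambda)$, and the sign bookkeeping (contribution $-r$ when $x_{i}$ sits at the repelling fixed point $y$, $+r$ otherwise) all come out right, giving $\mu^{L}(x,\lambda_{r})=r(a-2s_{y})$ and hence exactly the stated inequalities. The paper offers no proof of its own here --- it cites \cite[Proposition 3.4]{GIT94} --- and your Hilbert--Mumford computation is precisely the standard argument by which that cited result is proved.
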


\begin{corollary}\label{cor:stability}
\begin{enumerate}
\item For a linearization $L = \cO(a_{1}, \cdots, a_{n})$, $(\PP^{1})^{n}\git_{L}\SL_{2}$ is nonempty if and only if $a_{i} \le a/2$ for every $1 \le i \le n$.
\item The stable locus is nonempty (in particular, $(\PP^{1})^{n}\git_{L}\SL_{2}$ is $(n-3)$-dimensional) if and only if $a_{i} < a/2$ for every $1 \le i \le n$.
\item The semi-stable locus coincides with the stable locus if and only if for any nonempty $I \subset [n]$, $\sum_{i \in I}a_{i} \ne \sum_{i \notin I}a_{i}$.
\end{enumerate}
\end{corollary}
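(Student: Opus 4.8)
The plan is to derive all three equivalences directly from the numerical (semi)stability criterion of Theorem \ref{thm:stability}. In each case the ``if'' direction will be proved by exhibiting an explicit semistable or stable configuration, while the ``only if'' direction is a short contrapositive argument: one concentrates the weight mass at a single value $y \in \PP^{1}$ where the relevant inequality is forced to fail. Throughout I use that $\PP^{1}$ is infinite, so that $n$ distinct coordinate values can always be chosen.

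For part (1), observe that $(\PP^{1})^{n}\git_{L}\SL_{2}$ is nonempty precisely when the semistable locus is nonempty. If $a_{i} \le a/2$ for all $i$, I take the point $x = (x_{1}, \cdots, x_{n})$ with pairwise distinct coordinates; then for every $y \in \PP^{1}$ at most one index contributes, so $\sum_{x_{i}=y}a_{i} \le \max_{i}a_{i} \le a/2$, and $x$ is semistable. Conversely, if $a_{j} > a/2$ for some $j$, then for an arbitrary configuration the value $y = x_{j}$ already gives $\sum_{x_{i}=y}a_{i} \ge a_{j} > a/2$, so no point is semistable and the quotient is empty. Part (2) is proved identically with strict inequalities: the distinct-coordinate configuration is stable as soon as $a_{i} < a/2$ for all $i$, and if some stable point exists, then evaluating the stability inequality at $y = x_{j}$ forces $a_{j} < a/2$ for each $j$. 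For the dimension assertion I would invoke the standard fact that $\SL_{2}$ acts on the stable locus with finite stabilizers --- three distinct points of $\PP^{1}$ have stabilizer $\{\pm I\}$ in $\SL_{2}$ --- so that the geometric quotient has dimension $\dim (\PP^{1})^{n} - \dim \SL_{2} = n-3$; here $n \ge 3$ enters.

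For part (3), I first rewrite the hypothesis: since $\sum_{i \in I}a_{i} + \sum_{i \notin I}a_{i} = a$, the equality $\sum_{i \in I}a_{i} = \sum_{i \notin I}a_{i}$ is equivalent to $\sum_{i \in I}a_{i} = a/2$. A semistable point fails to be stable exactly when some value $y$ satisfies $\sum_{x_{i}=y}a_{i} = a/2$ (equality, since semistability already gives $\le a/2$). Thus if some nonempty $I \subset [n]$ has $\sum_{i \in I}a_{i} = a/2$, which forces $I \ne [n]$ because $a > 0$ and hence makes $I^{c}$ nonempty as well, the configuration placing the indices of $I$ at one point of $\PP^{1}$ and those of $I^{c}$ at a second point is semistable but not stable, so $((\PP^{1})^{n})^{s} \ne ((\PP^{1})^{n})^{ss}$. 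Conversely, if no such subset exists, then any semistable point in fact satisfies $\sum_{x_{i}=y}a_{i} < a/2$ for every $y$ --- an equality would exhibit such an $I$ as the set of indices landing at $y$ --- and is therefore stable.

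I do not anticipate a serious obstacle: all three statements are formal consequences of translating Theorem \ref{thm:stability} into conditions on the weights, and the witnessing configurations (all coordinates distinct; or a two-point split along a subset summing to $a/2$) are immediate. The only ingredient that is not a pure reformulation is the finite-stabilizer input used for the dimension count in part (2), which I would simply cite from standard GIT.
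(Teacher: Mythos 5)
Your proof is correct and takes essentially the same route the paper intends: the paper states this corollary without proof, as an immediate translation of the numerical criterion of Theorem \ref{thm:stability}. Your witness configurations (pairwise distinct coordinates, concentration at $y = x_{j}$, and the two-point split along a subset with $\sum_{i \in I}a_{i} = a/2$), together with the standard finite-stabilizer dimension count $\dim(\PP^{1})^{n} - \dim \SL_{2} = n-3$, are exactly the details the paper leaves to the reader.
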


\begin{definition}\label{def:effectivelinearizatiqon}
We say that a linearization $L$ is \textbf{effective} if it satisfies (2). An effective linearization is \textbf{general} if it satisfies (3) as well. Compare with Definition \ref{def:effecivegeneralweights}. 
\end{definition}

\begin{remark}\label{rmk:smoothnessofgeneralquotient}
The subgroup $\{\pm 1\} \subset \SL_{2}$ acts trivially on $(\PP^{1})^{n}$, thus the $\SL_{2}$-action induces a $\mathrm{PGL}_{2} = \SL_{2}/\{\pm 1\}$-action. If $L$ is general, at each point $x \in ((\PP^{1})^{n})^{s}$ the stabilizer $\mathrm{Stab}_{x}$ is $\{\pm 1\} \subset \SL_{2}$. So $\mathrm{PGL}_{2}$ acts on $((\PP^{1})^{n})^{s}$ freely, and $(\PP^{1})^{n}\git_{L}\SL_{2}$ is smooth. 
\end{remark}

\subsection{The moduli space of parabolic bundles as an elementary GIT quotient}

The readers are able to observe that the combinatorics of the GIT stability is identical to that of the stability of rank 2 parabolic bundles on $\PP^{1}$.

\begin{proposition}\label{prop:parabolicGIT}
Let $\vec{a} = (a_{1}, \cdots, a_{n}) \in W$ and let $L = \cO(a_{1}, \cdots, a_{n})$ be the corresponding $\QQ$-linearization. Assume that $L$ is effective and $a := \sum_{i = 1}^{n}a_{i} < 2$. Then 
\[
    \cM(\vec{a}) \cong (\PP^{1})^{n}\git_{L}\SL_{2}.
\]
\end{proposition}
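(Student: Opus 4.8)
The plan is to show that the two spaces parametrize exactly the same objects, and then upgrade this bijection to an isomorphism of varieties by means of a tautological family and Zariski's main theorem. The combinatorial matching of stability conditions (which the paper has already flagged as ``identical'') is the easy part; the scheme-theoretic comparison is where the content lies.

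First I would pin down the underlying bundle. By Grothendieck's theorem every rank $2$, degree $0$ bundle on $\PP^1$ splits as $\cO(k)\oplus\cO(-k)$ with $k\ge 0$, and then $\cO(k)$ is a line subbundle with $\pdeg\cO(k)\ge k$. Since $\mu(E)=a/2<1$ because $a<2$, any $k\ge 1$ forces $\mu(\cO(k))\ge 1>\mu(E)$, violating even semistability. Hence \emph{every} semistable parabolic bundle appearing in $\cM(\vec a)$ has underlying bundle $E\cong\cO^{\oplus 2}$. A parabolic structure on $\cO^{\oplus 2}$ is then precisely a choice, for each $i$, of a line $V_i\subset \cO^{\oplus 2}|_{p_i}=\CC^2$, i.e. a point $x_i\in\PP(\CC^2)=\PP^1$; so parabolic structures are parametrized by $(\PP^1)^n$, and two of them yield isomorphic parabolic bundles exactly when they lie in one orbit of $\mathrm{Aut}(\cO^{\oplus 2})=\mathrm{GL}_2$, equivalently of $\mathrm{PGL}_2=\SL_2/\{\pm 1\}$ (the scalars act trivially on the $V_i$).

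Next I would match the stability conditions. A saturated line subbundle of $\cO^{\oplus 2}$ is either $\cO(-j)$ with $j\ge 1$, or a degree-$0$ subbundle $\cO\cdot v$ for some $[v]\in\PP^1$. For $j\ge 1$ one has $\pdeg\cO(-j)\le -j+a<a/2$ precisely because $a<2$, so negative subbundles never destabilize. For the degree-$0$ subbundle $\cO\cdot v$, its induced weight is $b_i=a_i$ exactly when $V_i=\CC v$, i.e. $x_i=[v]$, so $\pdeg(\cO\cdot v)=\sum_{x_i=[v]}a_i$. Thus $(\cO^{\oplus 2},\{V_i\})$ is (semi)stable iff $\sum_{x_i=[v]}a_i\,(\le)<\,a/2$ for every $[v]\in\PP^1$, which is exactly the GIT (semi)stability criterion for $(x_1,\dots,x_n)$ in Theorem \ref{thm:stability}; the S-equivalence of strictly semistable bundles likewise corresponds to the identification of non-closed orbits in the GIT quotient. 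This already gives a bijection on closed points.

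To promote this to an isomorphism, I would build the tautological family: on $(\PP^1)^n\times\PP^1$ take the trivial bundle $\cO^{\oplus 2}$ and, at the $i$-th marked section $(\PP^1)^n\times\{p_i\}$, take the parabolic line $V_i=\pr_i^*\cO_{\PP^1}(-1)$ pulled back from the tautological subbundle of the $i$-th factor. Restricted to $((\PP^1)^n)^{ss}$ this is an $\SL_2$-equivariant family of semistable parabolic bundles, so by the universal property of the coarse moduli space it induces an $\SL_2$-invariant morphism $((\PP^1)^n)^{ss}\to\cM(\vec a)$, which factors through the good quotient to give $\Phi:(\PP^1)^n\git_L\SL_2\to\cM(\vec a)$. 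By the previous two paragraphs $\Phi$ is bijective. Both spaces are normal projective varieties — the good quotient of the smooth $(\PP^1)^n$ is normal, and $\cM(\vec a)$ is normal by Theorem \ref{moduli par bdls} — and we work in characteristic $0$, so $\Phi$ is generically injective, hence birational; a finite birational morphism onto a normal variety is an isomorphism. The main obstacle is exactly this last step: constructing the equivariant family and invoking the correct descent and coarse-moduli statements, and being careful that it is normality (not smoothness, which fails along the strictly semistable locus) that licenses the final Zariski's-main-theorem argument.
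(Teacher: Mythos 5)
Your proposal is correct and follows essentially the same route as the paper's proof: show the underlying bundle must be trivial via Grothendieck's theorem, match the parabolic and GIT stability conditions (including S-equivalence versus orbit closure identifications), construct the tautological $\SL_{2}$-equivariant family on $((\PP^{1})^{n})^{ss}$ to get a classifying morphism descending to the quotient, and conclude via bijectivity plus normality of $\cM(\vec{a})$. The only cosmetic difference is that the paper deduces surjectivity from dominance and irreducibility rather than directly from the pointwise bijection, which changes nothing of substance.
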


\begin{proof}
First of all, let $(E, \{V_{i}\})$ be a semistable parabolic bundle of degree 0. By Grothendieck's theorem (\cite[Theorem 1.3.1]{HuLe10}), $E = \cO(k) \oplus \cO(-k)$ for some nonnegative integer $k$. If $k \ge 1$, then $\mu(\cO(k)) \ge 1 > a/2 = \mu(E)$. Thus $E$ is not semistable unless it is a trivial bundle.

Let $X = (\PP^{1})^{n}$ and $\pi_{i} : X \to \PP^{1}$ be the $i$-th projection. Let $\cE$ be a rank two trivial vector bundle on $X \times \PP^{1}$. Then $\PP(\cE)$ is isomorphic to $X \times \PP^{1} \times \PP^{1}$.  For each $i$, define a morphism
\[
	s_{i}: X \to X \times \PP^{1} \stackrel{\cong}{\to} X\times\{p_i\}\times\PP^1
\]
by the graph of $i$-th projection. Over each $X \times \{p_{i}\}$, define a line bundle $\cV_{i}$ as $[\cV_{i}|_{(x, p_{i})}] = s_{i}(x)$. Then $\cV_{i}$ is a natural subbundle of $\cE|_{X \times \{p_{i}\}}$ of rank one. Now $(\cE, \{\cV_{i}\})$ is a family of rank 2 parabolic vector bundles on $\PP^{1}$ over $X$. Consider the restricted family over $X^{ss}$ and use the same notation $(\cE, \{\cV_{i}\})$.

Let $(\cO^2, \{V_{i}\})$ be the fiber of $(\cE, \{\cV_{i}\})$ over $x = (x_{1}, \cdots, x_{n}) \in X^{ss}$. Note that all subbundles of $\cO^{2}$ is $\cO(-k)$ for some nonnegative integer $k$. If $k \ge 1$, then
\[
	\mu(\cO(-k)) = -k +  \sum_{\cO(-k)|_{p_{i}} = V_{i}}a_{i} <
	\frac{a}{2} = \mu(\cO^{2})
\]
because 
\[
	\sum_{\cO(-k)|_{p_{i}} = V_{i}}a_{i}- \sum_{\cO(-k)|_{p_{i}} \neq V_{i}}a_{i} 
	\le a < 2 \le 2k.
\]
So it is not a destabilizing bundle. 

Let $F = V \otimes \cO \subset \cO^{2}$ be a trivial subbundle for a one dimensional subspace $V \subset \CC^{2}$. Note that if $F|_{x_{i}} = V$ for some $i \in I \subset [n]$, $x_{i} = x_{j}$ for every $i, j \in I$. From the GIT stability in Theorem \ref{thm:stability} (with $y = [V]$), $\sum_{E|_{x_{i}} = V} a_{i} \le a/2$. Thus for $x = (x_{1}, \cdots, x_{n})\in X^{ss}$,
\[
	\mu(E) = \sum_{E|_{x_{i}} = V}a_{i} \le \frac{a}{2} = \mu(\cO^{2}).
\]
Therefore $X^{ss}$ parametrizes semistable parabolic vector bundles with respect to the parabolic weight $\vec{a}$. Thus we have a classifying morphism $\mu : X^{ss} \to \cM(\vec{a})$. There is a natural $\SL_{2}$-action on $X^{ss}$ and each orbit parametrizes isomorphic parabolic bundles, since it acts as a canonical $\SL_{2}$-action on each fiber of the trivial rank 2 bundle. Thus there is a quotient morphism $\bar{\mu} : X\git_{L}\SL_{2} \to \cM(\vec{a})$.

One can check that $\bar{\mu}$ is injective. Indeed, the injectivity over the stable locus $\cM(\vec{a})^{s}$ is obvious. For a strictly semistable point corresponding an S-equivalent class of $E := (\cO, \vec{b}_{1})\oplus (\cO, \vec{b}_{2}) \in \cM(\vec{a})$, $\mu^{-1}(E) = X_{1} \cup X_{2}$ where $X_{i} = \{(x_{1}, \cdots, x_{n})\;|\; x_{j} = x_{k} \mbox{ if } j, k \in I_{\vec{b}_{i}}\}$. Because the closure of the orbit of a point in $X_{i}$ contains an orbit $X_{1} \cap X_{2}$ which is closed in $X^{ss}$, they are identified to a point in the GIT quotient. Since $\cM(\vec{a})$ is irreducible and $\bar{\mu}$ is dominant, $\bar{\mu}$ is surjective. Finally, because $\cM(\vec{a})$ is normal by Theorem \ref{moduli par bdls}, $\bar{\mu}$ is an isomorphism.
\end{proof}

It is already known that $\cM(\vec{a})$ is rational for any effective parabolic weight $\vec{a}\in W$ (\cite{Bau91, BH95}). We provide another proof of the rationality of $\cM(\vec{a})$ for any effective parabolic weight $\vec{a}\in W$, which is a simple consequence of Proposition \ref{prop:parabolicGIT}.

\begin{corollary}
For any effective parabolic weight $\vec{a} \in W$, $\cM(\vec{a})$ is rational.
\end{corollary}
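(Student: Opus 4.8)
The plan is to reduce to the case of small total weight, where Proposition \ref{prop:parabolicGIT} presents $\cM(\vec{a})$ as a GIT quotient, and then to transport rationality to every effective weight using that it is a birational invariant together with the wall-crossing analysis of Section \ref{sec:wallcrossing}. The one genuinely geometric input is the rationality of the GIT quotient $(\PP^{1})^{n}\git_{L}\SL_{2}$ itself, which I would establish by a cross-ratio normalization: over the dense open locus where $x_{1}, x_{2}, x_{3}$ are pairwise distinct there is a unique element of $\mathrm{PGL}_{2} = \SL_{2}/\{\pm 1\}$ carrying $(x_{1}, x_{2}, x_{3})$ to $(0, 1, \infty)$, and the remaining coordinates $(x_{4}, \cdots, x_{n})$ descend to rational coordinates on the quotient. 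This exhibits a birational equivalence between $(\PP^{1})^{n}\git_{L}\SL_{2}$ and $(\PP^{1})^{n-3}$, so the quotient is rational. Combined with Proposition \ref{prop:parabolicGIT}, this already settles the corollary for every effective weight with $\sum_{i}a_{i} < 2$.

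Next I would connect an arbitrary general effective weight $\vec{a}$ to such a small weight by scaling along the ray $t\vec{a}$, letting $t$ decrease from $1$ to a small positive value with $t\sum_{i}a_{i} < 2$. The effectivity condition is scale invariant (it is exactly the single-point inequality $a_{i} < \tfrac{1}{2}\sum_{j}a_{j}$, the same condition appearing in Corollary \ref{cor:stability}(2)), so every weight on the ray is again effective, and the ray meets only finitely many walls $\Delta_{I,m}$. Each time the ray crosses a wall, Proposition \ref{prop:wallcrossing} exhibits $\cM(\vec{a}^{-})$ and $\cM(\vec{a}^{+})$ as having a common blow-up along the smooth centers $Y^{\pm}$; since the blow-up of a variety along a smooth center is birational to that variety, $\cM(\vec{a}^{-})$ and $\cM(\vec{a}^{+})$ are birational, and rationality is inherited in passing from one chamber to the next. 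Tracing this back from the small-weight chamber shows $\cM(\vec{a})$ is rational. Finally, if $\vec{a}$ is effective but not general, so that it lies on a wall, a nearby general weight $\vec{a}'$ admits a birational morphism $\cM(\vec{a}') \to \cM(\vec{a})$ (the map $\phi^{\pm}$ of Section \ref{sec:wallcrossing}), which is an isomorphism over the dense stable locus, and rationality transfers to $\cM(\vec{a})$.

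The main obstacle is the bookkeeping in the second step rather than any single hard computation: one must verify that the scaling ray stays inside the effective locus throughout, and that the chamber-to-chamber transitions remain birational even when the ray happens to pass through an intersection of several walls simultaneously. The former reduces to confirming the characterization of effective degree-$0$ weights by the scale-invariant inequality $a_{i} < \tfrac{1}{2}\sum_{j}a_{j}$ (so that the effective region is a convex cone, hence star-shaped about the origin and traversed effectively by the ray). The latter is harmless because each transition is an isomorphism over the common dense stable locus shared by the two neighboring moduli spaces, so birationality holds regardless of how many walls coincide, even when Proposition \ref{prop:wallcrossing} does not apply to a single wall.
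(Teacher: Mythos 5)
Your overall architecture mirrors the paper's proof: reduce to the small-weight chamber, where Proposition \ref{prop:parabolicGIT} identifies $\cM(\vec{a})$ with $(\PP^{1})^{n}\git_{L}\SL_{2}$, and transport rationality across the walls met by the scaling ray; your cross-ratio normalization is a perfectly good elementary substitute for the paper's citation of Hassett's isomorphism $(\PP^{1})^{n}\git_{L}\SL_{2}\cong \PP^{n-3}$ for $L$ proportional to $\cO(n-2,1,\cdots,1)$. However, there is a genuine gap in the step you yourself single out as the one needing verification: the claim that effectivity of a degree-zero parabolic weight is ``exactly the single-point inequality $a_{i} < \tfrac{1}{2}\sum_{j}a_{j}$,'' hence scale-invariant and conical, is false. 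That inequality characterizes non-emptiness only in the regime $\sum_{i}a_{i}<2$ covered by Proposition \ref{prop:parabolicGIT} and Corollary \ref{cor:stability}; for larger weights there are additional, non-homogeneous constraints coming from the walls $\Delta_{I,m}$ with $m \ge 1$. Concretely, for $n=5$ take $\vec{a} = (9/10,9/10,9/10,1/10,1/10)$: the single-point inequality holds ($9/10 < 29/20$), yet $\cM(\vec{a})^{s}=\emptyset$. Indeed, for $E=\cO^{2}$, either two of $[V_{1}],[V_{2}],[V_{3}]$ coincide, in which case a trivial subbundle through both has parabolic slope at least $9/5 > 29/20$, or they are distinct, in which case the unique degree-one classifying map through $[V_{1}],[V_{2}],[V_{3}]$ produces a subbundle $\cO(-1)$ of slope at least $-1+27/10 = 17/10 > 29/20$; the bundles $\cO(k)\oplus\cO(-k)$ with $k\ge 1$ are destabilized similarly (by $\cO(k)$ if it meets some $V_{i}$, $i\le 3$, and otherwise by an interpolated $\cO(-1)$ through $V_{1},V_{2},V_{3}$). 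So the effective region is not a cone, scaling \emph{up} can destroy effectivity, and ``every weight on the ray is again effective'' does not follow from your characterization.

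What saves your proof is an asymmetry you never invoke: scaling \emph{down} cannot destroy effectivity, but this requires the dimension count of Proposition \ref{dim of centers} rather than convexity. Along a scaling ray every wall crossed has $m\ge 1$ (as in the proof of Proposition \ref{prop:generalmoduli}), and when such a wall $\Delta_{I,m}$ is crossed downward the locus deleted from $\cM(\vec{a}^{+})$ is $Y^{+}=\PP\Ext^{1}((\cO(-m),\vec{b}^{+}),(\cO(m),\vec{c}^{+}))$, of dimension $|I|-2m-2 \le n-4 < n-3 = \dim \cM(\vec{a}^{+})$. Hence the common stable locus is a nonempty dense open subset of both neighboring moduli spaces, so effectivity propagates down the ray and each transition is birational; the same count is what makes your treatment of simultaneous wall crossings legitimate, since there too you asserted without proof that the shared stable locus is dense. (Note that the walls with $|I|=2m+1$ are exactly those whose upward crossing empties the moduli space, since then $\dim Y^{-} = 2m+n-2-|I| = n-3$, which is why your cone claim fails.) With this substitution your argument becomes correct and is essentially the paper's proof, which compresses precisely this bookkeeping into the appeal to Propositions \ref{prop:wallcrossing} and \ref{prop:parabolicGIT}.
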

\begin{proof}
By Proposition \ref{prop:wallcrossing} and Proposition \ref{prop:parabolicGIT}, $\cM(\vec{a})$ is birational to $(\PP^{1})^{n}\git_{L}\SL_{2}$ where $L$ is an effective linearization. It is known that $(\PP^{1})^{n}\git_{L}\SL_{2}\cong\PP^{n-3}$, when $L$ is proportional to $\cO(n-2,1,1,\cdots,1)$ (\cite[Sections 6.2, 7.2 and Theorem 8.2]{Has03}).
\end{proof}

\subsection{General case}

For a general parabolic weight $\vec{a} \in W$, we may find $c > 1$ such that $\vec{a} = c\vec{b}$ and $\sum b_{i} < 2$. Thus to study the geometry of $\cM(\vec{a})$, it suffices to study the change of the moduli space when the parabolic weight changes from $\cM(\vec{b})$ to $\cM(c\vec{b}) = \cM(\vec{a})$ for $c > 1$. Note that if $1 \le c < \min\{1/b_{i}\}$, then $c\vec{b} \in W$, too. By perturbing the given parabolic weight slightly, we may assume that all wall-crossings are simple ones. 

\begin{proposition}\label{prop:generalmoduli}
Let $\vec{a}$ be a general parabolic weight in $W$ such that $\sum a_{i} < 2$. Consider the wall-crossings from $\cM(\vec{a})$ to $\cM(c\vec{a})$ as $c$ increases in the range of $1 \le c < \min\{1/a_{i}\}$. Suppose that all wall-crossings are simple ones. Then the first wall-crossing is a blow-up at the point $[\vec{p}] \in (\PP^{1})^{n}\git_{L}\SL_{2} \cong \cM(\vec{a})$. All other wall-crossings are flips or blow-downs. 
\end{proposition}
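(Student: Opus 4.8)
The plan is to walk along the ray $\{c\vec{a} : c \ge 1\}$ and analyze each simple wall-crossing separately, using two inputs already established: the common-blow-up description of a simple wall-crossing (Proposition \ref{prop:wallcrossing}), which says that the simultaneous blow-ups of $\cM(\vec{a}^{-})$ along $Y^{-}$ and of $\cM(\vec{a}^{+})$ along $Y^{+}$ agree, and the dimension count for the centers (Proposition \ref{dim of centers}). Since both $\cM(\vec{a}^{\pm})$ are smooth (Corollary \ref{smoothness of moduli}), the entire statement reduces to reading off $\dim Y^{-}$ and $\dim Y^{+}$ at each wall and comparing them with $\dim \cM(\vec{a}) - 1 = n-4$.

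First I would locate the first wall. Along the ray the wall $\Delta_{I,m}$ is met at $c_{I,m} = 2m/(2\sum_{i\in I}a_{i}-a)$, where $a = \sum a_{i}$. Because $\sum_{i\in I}a_{i}\le a$ with equality only when $I=[n]$, and $m\ge 1$ (no $\Delta_{I,0}$ wall is crossed, as $\vec{a}$ is general), the smallest admissible value is $c=2/a>1$, attained uniquely at $I=[n]$, $m=1$. Moreover effectiveness forces $a_{i}<a/2$, so $2/a<\min\{1/a_{i}\}$ and this wall lies in the prescribed range; and for $1\le c<2/a$ every $\cM(c\vec{a})$ equals the same quotient $(\PP^{1})^{n}\git_{L}\SL_{2}$ by Proposition \ref{prop:parabolicGIT}. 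Thus the first genuine change of moduli is across $\Delta_{[n],1}$.

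At this wall, the side $c<2/a$ is $\Delta^{-}_{[n],1}$, so $\cM(\vec{a})=\cM(\vec{a}^{-})$. Proposition \ref{dim of centers} with $m=1$ and $|I|=n$ gives $\dim\Ext^{1}=2m+n-1-|I|=1$, so $Y^{-}$ is a single point, and Proposition \ref{prop:wallcrossing} forces $\dim Y^{+}=(n-4)-0=n-4$, i.e.\ $Y^{+}$ is a smooth divisor in the smooth variety $\cM(\vec{a}^{+})$. Blowing up a smooth Cartier divisor is an isomorphism, so Proposition \ref{prop:wallcrossing} yields $\cM(\vec{a}^{+})=\mathrm{Bl}_{Y^{-}}\cM(\vec{a}^{-})$, the blow-up of $(\PP^{1})^{n}\git_{L}\SL_{2}$ at the point $Y^{-}$; identifying $Y^{-}$ with the distinguished GIT class $[\vec{p}]$ (the $S$-equivalence class $(\cO(-1),\vec{a})\oplus(\cO(1),\vec{0})$ of the destabilized bundle of Lemma \ref{max dest}, transported through Proposition \ref{prop:parabolicGIT}) gives the first assertion.

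For every later wall $\Delta_{I,m}$ met at $c>2/a$, the orientation is again $-\to+$ as $c$ increases since $2\sum_{i\in I}a_{i}-a>0$, and Propositions \ref{dim of centers} and \ref{prop:wallcrossing} give $\dim Y^{-}=2m+n-2-|I|$ and $\dim Y^{+}=|I|-2m-2$, summing to $n-4$. The equation $\dim Y^{-}=0$ forces $|I|=2m+n-2\ge n$, hence $|I|=n$, $m=1$, which is the first wall; so every subsequent wall has $\dim Y^{-}\ge 1$. If $\dim Y^{+}=0$ the common blow-up shows $\cM(\vec{a}^{-})=\mathrm{Bl}_{Y^{+}}\cM(\vec{a}^{+})$ is a point blow-up, i.e.\ the crossing is a blow-down; otherwise $1\le\dim Y^{\pm}\le n-5$, so $\operatorname{codim}Y^{-}=1+\dim Y^{+}\ge 2$ and $\operatorname{codim}Y^{+}=1+\dim Y^{-}\ge 2$, and the common blow-up exhibits a small modification isomorphic in codimension one, i.e.\ a flip. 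This gives the final claim. The steps I expect to require the most care are fixing the $-/+$ orientation consistently with increasing $c$, the explicit identification of the blown-up point with $[\vec{p}]$, and checking that the codimension-$\ge 2$ small modifications are genuinely flips for the relevant polarization rather than merely birational isomorphisms in codimension one.
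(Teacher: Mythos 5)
Your proposal is correct and follows essentially the same approach as the paper's proof: it locates the first wall as $\Delta_{[n],1}$ (met at $c=2/a$), combines Proposition \ref{prop:wallcrossing} with Proposition \ref{dim of centers} to conclude that $Y^{-}$ is a point and $Y^{+}$ a divisor, hence the crossing is a point blow-up, and rules out blow-ups at all later walls because $2m+n-1-|I|>1$ there (your finer sorting into blow-downs versus flips is a harmless refinement of the paper's statement). The one step you leave incomplete --- and correctly flag as needing care --- is the identification $Y^{-}=[\vec{p}]$: your parenthetical describes $Y^{-}$ via its image on the wall (the S-equivalence class of the split bundle, which is actually unstable for $\vec{a}^{-}$ itself, and Proposition \ref{prop:parabolicGIT} does not apply at the wall weight since there $\sum a_i = 2$), whereas the paper pins down the point inside $\cM(\vec{a}^{-})\cong(\PP^{1})^{n}\git_{L}\SL_{2}$ directly, by observing that a subbundle $\cO(-1)\subset\cO^{2}$ containing every $V_{i}$ must be the tautological subbundle, so the destabilized configurations $([V_{1}],\cdots,[V_{n}])$ form exactly the $\SL_{2}$-orbit of $\vec{p}$.
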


\begin{proof}
By Lemma \ref{lem:stabilitywall}, each stability wall is given by $\Delta_{I, m}$. Then for $c_{0}\vec{a} \in \Delta_{I, m}$, 
\[
	c_{0}\left(\sum_{i \in I}a_{i} - \sum_{i \in I^{c}}a_{i}\right) = 2m
\]
by \eqref{eqn:stabilitywall}. Then for $c > c_{0}$, $c\vec{a} \in \Delta_{I, m}^{+}$ and 
\[
	c\left(\sum_{i \in I}a_{i} - \sum_{i \in I^{c}}a_{i}\right) > 2m.
\]
This is true only if $m \ge 0$. Also, during the variation of stability conditions in the proposition, we do not meet a stability wall of type $\Delta_{I, 0}$, because the ratios between parabolic weights do not change. Thus $m > 0$. Now it is clear that the first stability wall that we meet is $\Delta_{[n], 1}$, i.e., when $\sum_{i=1}^{n} ca_{i} = 2$. 

Then by Proposition \ref{prop:wallcrossing}, the blow-up of $\cM^{-}$ along $Y^{-}$ is isomorphic to the blow-up of $\cM^{+}$ along $Y^{+}$. Furthermore, by Proposition \ref{dim of centers}, $Y^{-}$ is a point and $Y^{+}=\PP^{n-4}$, which is a divisor of $\cM^{+}$. Therefore $\cM^{+}$ is isomorphic to the blow-up of $\cM^{-} = (\PP^{1})^{n}\git_{L}\SL_{2}$ at the point $Y^{-}$. Note that $x = (x_{1}, \cdots, x_{n}) \in Y^{-}$ if and only if the corresponding parabolic bundle $(\cO^{2}, \{V_{i}\})$ has a subbundle $\cO(-1)$ which contains all $V_{i}$'s. Since $\cO(-1) \subset \cO$ is isomorphic to the tautological subbundle, $(x_{1}, \cdots, x_{n}) = ([V_{1}], \cdots, [V_{n}])$ is equivalent to $\vec{p} = (p_{1}, \cdots, p_{n})$. 

After the first wall-crossing, since $m > 1$ or $|I| < n$, $2m+n-1-|I| > 1$. Thus by Proposition \ref{dim of centers} and Proposition \ref{prop:wallcrossing} again, the modification is not a blow-up anymore. 
\end{proof}

\section{The effective cone of the GIT quotient}\label{sec:effectiveconeGIT}

As a first step toward Mori's program of $\cM(\vec{a})$ and $(\PP^{1})^{n}\git_{L}\SL_{2}$, we compute the effective cone of $(\PP^{1})^{n}\git_{L}\SL_{2}$. 

\subsection{Rational Picard group}

The Picard group of $(\PP^{1})^{n}$ is generated by the pull-backs $\pi_{i}^{*}\cO(1)$ for $1 \le i \le n$ where $\pi_{i} : (\PP^{1})^{n} \to \PP^{1}$ is the $i$-th projection. We denote the tensor product $\pi_{1}^{*}\cO(b_{1}) \otimes \cdots \otimes \pi_{n}^{*}\cO(b_{n})$ by $\cO(b_{1}, \cdots, b_{n})$, or $\cO(\sum_{i=1}^{n}b_{i}e_{i})$ where $e_{i}$ is the $i$-th standard basis in $\QQ^{n}$. So $\Pic((\PP^{1})^{n}) \cong \ZZ^{n}$ and the nef cone $\mathrm{Nef}((\PP^{1})^{n}) \subset \Pic((\PP^{1})^{n})_{\QQ} \cong \QQ^{n}$ is generated by $\cO(e_{i})$. The effective cone is equal to the nef cone so it is simplicial. 

Let $L = \cO(a_{1}, \cdots, a_{n})$ be a $\QQ$-linearization of $(\PP^{1})^{n}$. Consider the GIT quotient $(\PP^{1})^{n}\git_{L}\SL_{2}$. Since it is a quotient of semistable locus, there is a natural diagram
\[
	\xymatrix{((\PP^{1})^{n})^{ss} \ar[r]^-{\iota} \ar[d]_{\pi}
	& (\PP^{1})^{n}\\ (\PP^{1})^{n}\git_{L}\SL_{2}}
\]
where $\iota$ is the inclusion and $\pi$ is the quotient map.

For any two indices $1 \le i < j \le n$, let 
\[
	\Delta_{\{i,j\}}= \{(x_{1}, \cdots, x_{n}) \in (\PP^{1})^{n}
	\;|\; x_{i} = x_{j}\}.
\]
It is $\SL_{2}$-invariant, so it descends to an effective cycle
\[
	D_{\{i,j\}} = \pi(\iota^{*}(\Delta_{\{i,j\}})) = 
	\{(x_{1}, \cdots, x_{n})\in (\PP^{1})^{n}\git_{L}\SL_{2}
	\;|\; x_{i} = x_{j}\}
\]
on the quotient, if $\Delta_{\{i,j\}}$ intersects the semistable locus, i.e., $a_{i}+a_{j} \le a/2$. Furthermore, if it intersects the stable locus (so $a_{i}+a_{j} < a/2$), then $D_{\{i,j\}}$ is a divisor on $(\PP^{1})^{n}\git_{L}\SL_{2}$. If $a_{i}+a_{j} = a/2$, $\Delta_{\{i, j\}}$ has a unique semistable orbit $\{(x_{1}, \cdots, x_{n}) \;|\; x_{i}=x_{j}, x_{k} = x_{\ell} \mbox{ for all } k, \ell \ne i, j\}$ which is closed in $((\PP^{1})^{n})^{ss}$. Thus in this case $D_{\{i,j\}}$ is a single point. 

Note that $\cO(\Delta_{\{i,j\}}) = \cO(e_{i}+e_{j})$. 

\begin{proposition}\label{prop:picardgroup}
Suppose that $n \ge 5$. Let $L = \cO(a_{1}, \cdots, a_{n})$ be a general $\QQ$-linearization on $(\PP^{1})^{n}$. Let $a = \sum a_{i}$.
\begin{enumerate}
\item The rational Picard group $\Pic((\PP^{1})^{n}\git_{L}\SL_{2})_{\QQ}$ is naturally identified with the quotient space 
\[
	\Pic((\PP^{1})^{n})_{\QQ}/\langle \Delta_{\{i,j\}}\;|\; 
	a_{i}+a_{j} \ge a/2\rangle,
\]
via the identification $D_{\{i,j\}} \mapsto \Delta_{\{i,j\}}$.
\item The rank of $\Pic((\PP^{1})^{n}\git_{L}\SL_{2})_{\QQ}$ is $n - k$, where $k$ is the number of $\Delta_{\{i,j\}}$ with $a_{i}+a_{j} \ge a/2$.
\end{enumerate}
\end{proposition}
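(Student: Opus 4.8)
The plan is to compute $\Pic(Q)_\QQ$, where $Q:=(\PP^{1})^{n}\git_{L}\SL_{2}$, by comparing it with $\Pic((\PP^{1})^{n})_\QQ$ through the intermediate group $\Pic(X^{ss})_\QQ$, where $X:=(\PP^{1})^{n}$: first an excision sequence passing from $X$ to its semistable locus, then descent along the quotient map. For the excision step, note that since $L$ is general we have $X^{ss}=X^{s}$, and a point is unstable exactly when some $S\subset[n]$ with $\sum_{i\in S}a_{i}>a/2$ has all of its marked points equal. Effectivity gives $a_{i}<a/2$ for every $i$ (Corollary \ref{cor:stability}(2)), so each such $S$ has $|S|\ge 2$ and the locus $\{x_{i}=x_{j}\ \forall\, i,j\in S\}$ has codimension $|S|-1$. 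Hence the codimension-one components of the unstable locus are precisely the divisors $\Delta_{\{i,j\}}$ with $a_{i}+a_{j}>a/2$, i.e. $a_{i}+a_{j}\ge a/2$ for general $L$. The standard excision sequence for class groups of the smooth variety $X$ (via $\Pic=\mathrm{Cl}$) then gives $\Pic(X^{ss})\cong\Pic(X)/\langle\Delta_{\{i,j\}}:a_{i}+a_{j}\ge a/2\rangle$ (for $n\ge 5$ the quotient $Q$ has dimension $n-3\ge 2$, the regime of interest).

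Next I would descend from $X^{ss}$ to $Q$. By Remark \ref{rmk:smoothnessofgeneralquotient} the induced $\mathrm{PGL}_{2}$-action on $X^{ss}=X^{s}$ is free, so Kempf's descent lemma identifies $\Pic(Q)\cong\Pic^{\mathrm{PGL}_{2}}(X^{ss})$: every linearized bundle descends, and pullback is the inverse. I would then use the standard exact sequence $\Hom(\mathrm{PGL}_{2},\mathbb{G}_{m})\to\Pic^{\mathrm{PGL}_{2}}(X^{ss})\to\Pic(X^{ss})\to\Pic(\mathrm{PGL}_{2})$ comparing equivariant and ordinary Picard groups. Its left term vanishes and its right term is $\ZZ/2$, so the forgetful map is injective with cokernel annihilated by $2$; after tensoring with $\QQ$ it becomes an isomorphism $\pi^{*}\colon\Pic(Q)_\QQ\xrightarrow{\ \sim\ }\Pic(X^{ss})_\QQ$. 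Composing with the excision isomorphism and recording $\pi^{*}D_{\{i,j\}}=\Delta_{\{i,j\}}|_{X^{ss}}=\cO(e_{i}+e_{j})|_{X^{ss}}$ yields the identification $D_{\{i,j\}}\mapsto\Delta_{\{i,j\}}$ of (1).

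For (2) it remains to prove that the $k$ classes $\Delta_{\{i,j\}}=e_{i}+e_{j}$ with $a_{i}+a_{j}\ge a/2$ are linearly independent in $\Pic(X)_\QQ\cong\QQ^{n}$. The decisive point is that no two such ``heavy'' pairs are disjoint: disjoint heavy pairs $\{i,j\}$ and $\{k,\ell\}$ would force $a_{i}+a_{j}+a_{k}+a_{\ell}>a=\sum_{m}a_{m}$, which is impossible since every $a_{m}>0$. A graph on $[n]$ in which any two edges meet is either a star or a triangle, and in both cases the incidence vectors $e_{i}+e_{j}$ are independent --- the star case by reading off the non-central coordinates, and the triangle case because $\det\left(\begin{smallmatrix}1&1&0\\1&0&1\\0&1&1\end{smallmatrix}\right)=-2\ne 0$. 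Hence the $k$ generators appearing in (1) are independent, giving $\rank\Pic(Q)_\QQ=n-k$.

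The main obstacle I anticipate is the descent step, and within it the factor-of-$2$ subtlety: integrally $\pi^{*}$ fails to be surjective because $-I\in\SL_{2}$ acts on $\cO(b_{1},\dots,b_{n})$ by $(-1)^{\sum b_{i}}$ while $\Pic(\mathrm{PGL}_{2})=\ZZ/2$, which is exactly why the statement is phrased over $\QQ$. By contrast the combinatorial independence in (2), though it carries the genuine content, is routine once the observation that no two heavy pairs are disjoint is in hand.
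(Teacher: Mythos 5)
Your proposal is correct and follows essentially the same route as the paper's proof: excision from $(\PP^{1})^{n}$ to the stable locus to identify the quotient of $\Pic((\PP^{1})^{n})_{\QQ}$ by the heavy diagonals, descent along the quotient map, and the observation that no two heavy pairs are disjoint, forcing a star or triangle whose incidence vectors are independent. The only divergence is bookkeeping in the descent step --- you use the free $\mathrm{PGL}_{2}$-action, Kempf descent, and the Knop--Kraft--Vust sequence with $\Pic(\mathrm{PGL}_{2})=\ZZ/2$, while the paper stays with $\SL_{2}$, proving invariance of all rational classes via $\cO(e_{i})=\frac{1}{2}\cO(\Delta_{\{i,j\}}+\Delta_{\{i,k\}}-\Delta_{\{j,k\}})$ and invoking triviality of $\mathrm{Hom}(\SL_{2},\CC^{*})$ plus normality for uniqueness and existence of linearizations on a power --- the same factor-of-two subtlety you flag, handled on the other side of the isogeny.
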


\begin{remark}
When $n = 4$, for any effective linearization $L$, the GIT quotient $(\PP^{1})^{4}\git_{L}\SL_{2}$ is isomorphic to $\PP^{1}$.
\end{remark}

\begin{proof}[Proof of Proposition \ref{prop:picardgroup}]
Let $((\PP^{1})^{n})^{us} := (\PP^{1})^{n} - ((\PP^{1})^{n})^{s}$ be the unstable locus and let $j : ((\PP^{1})^{n})^{us} \hookrightarrow (\PP^{1})^{n}$ be the inclusion. We have a natural exact sequence
\[
	A_{n-1}(((\PP^{1})^{n})^{us}) \stackrel{j_{*}}{\longrightarrow}
	\Pic((\PP^{1})^{n}) \stackrel{\iota^{*}}{\longrightarrow}
	\Pic(((\PP^{1})^{n})^{s}) \to 0.
\]
After tensoring $\QQ$, the sequence is exact too. Each $(n-1)$-dimensional irreducible component of $((\PP^{1})^{n})^{us}$ is of the form $\Delta_{\{i,j\}}$ with $a_{i}+a_{j} \ge a/2$. Therefore we have 
\[
	\Pic(((\PP^{1})^{n})^{s})_{\QQ}
	= \Pic((\PP^{1})^{n})_{\QQ}/\langle \Delta_{\{i,j\}}\;|\; 
	a_{i}+a_{j} \ge a/2\rangle.
\]

Let $\Pic(X)^{\SL_{2}}$ be the group of isomorphism classes of $\SL_{2}$-invariant line bundles on $X$. Since
\[
	\cO(e_{i}) = \frac{1}{2}\left(\cO(e_{i}+e_{j}) \otimes  
	\cO(e_{i}+e_{k}) \otimes \cO(e_{j}+e_{k})^{-1}\right) = 
	\frac{1}{2}\left(\cO(\Delta_{\{i,j\}}+\Delta_{\{i,k\}}
	- \Delta_{\{j,k\}})\right)
\]
and the right hand side is $\SL_{2}$-invariant, $\Pic((\PP^{1})^{n})_{\QQ} \cong \Pic((\PP^{1})^{n})_{\QQ}^{\SL_{2}}$. The same identity is true for $((\PP^{1})^{n})^{s}$, too.

By Kempf's descent lemma (\cite[Theorem 2.3]{DN89}), an $\SL_{2}$-linearized (in particular, $\SL_{2}$-invariant) line bundle $E$ on $((\PP^{1})^{n})^{s}$ descends to $(\PP^{1})^{n}\git_{L}\SL_{2}$ if and only if for every closed orbit $\SL_{2}\cdot x$, the stabilizer $\mathrm{Stab}_{x}$ acts on $E_{x}$ trivially. Because $\mathrm{Hom}(\SL_{2}, \CC^{*})$ is trivial, for each $\SL_{2}$-invariant line bundle there is at most one linearization. Furthermore since $(\PP^{1})^{n}$ is normal, for any $\SL_{2}$-invariant line bundle $E$, $E^{n}$ admits a linearization for some $n \in \NN$ (\cite[Corollary I.1.6]{GIT94}). Therefore 
\[
	\Pic((\PP^{1})^{n}\git_{L}\SL_{2})_{\QQ}
	\cong \Pic(((\PP^{1})^{n})^{s})_{\QQ}^{\SL_{2}}.
\]
This isomorphism is given by $\pi^{*}$. Thus $D_{\{i,j\}}$ maps to $\Delta_{\{i,j\}}$. This proves Item (1).

To show Item (2), it suffices to show that the set of divisorial unstable components are linearly independent in $\Pic((\PP^{1})^{n})_{\QQ}$. Let $G$ be a finite simple graph with vertex set $[n]$ and edge set $\{\Delta_{\{i,j\}}\;|\; a_{i}+a_{j} \ge a/2\}$, the set of unstable divisors. Two vertices $i$ and $j$ are connected by $\Delta_{\{i,j\}}$. If there are two disjoint edges $\Delta_{\{i,j\}}, \Delta_{\{k, \ell\}}$ in $G$, $a > a_{i}+a_{j} + a_{k} + a_{\ell} \ge a$. Thus there are no disjoint edges. Then $G$ must be a star shaped graph (all vertices are connected to a central vertex) or a complete graph $K_{3}$ of degree 3. In these cases, it is straightforward to check that the edge set is linearly independent. 
\end{proof}

\begin{definition}
A $\QQ$-linearization $L = \cO(a_{1}, \cdots, a_{n})$ is called a linearization with \textbf{a maximal stable locus} if $a_{i}+a_{j} < a/2$ for any $\{i,j\} \subset [n]$. 
\end{definition}

Note that if $L$ is a linearization with a maximal stable locus, then every irreducible component of the unstable locus has codimension at least two. In particular, we have the maximal possible Picard rank. It includes the case of \textbf{symmetric} linearization $L = \cO(b, b, \cdots, b)$ for some $b \in \QQ_{> 0}$. 

\begin{corollary}
Suppose that $n \ge 5$. Let $L$ be a general $\QQ$-linearization with a maximal stable locus. Then $\Pic((\PP^{1})^{n}\git_{L}\SL_{2})_{\QQ}$ is isomorphic to $\Pic((\PP^{1})^{n})_{\QQ}$. In particular, it has rank $n$ and $D_{\{i, j\}}$ generates the rational Picard group.
\end{corollary}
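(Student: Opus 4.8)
The plan is to read the corollary off directly from Proposition \ref{prop:picardgroup}, observing that the maximal-stable-locus hypothesis is exactly the condition that removes every unstable divisor occurring there. The first step is to unwind the definition: saying that $L$ has a maximal stable locus means $a_{i}+a_{j} < a/2$ for every pair $\{i,j\} \subset [n]$. Comparing this with the statement of Proposition \ref{prop:picardgroup}, it says precisely that there are \emph{no} divisors $\Delta_{\{i,j\}}$ with $a_{i}+a_{j} \ge a/2$, i.e.\ the integer $k$ of that proposition is $0$.

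Next I would invoke Proposition \ref{prop:picardgroup}(1). The subspace $\langle \Delta_{\{i,j\}}\;|\; a_{i}+a_{j} \ge a/2\rangle$ by which we quotient is now the zero subspace, so the quotient map is an isomorphism and
\[
	\Pic((\PP^{1})^{n}\git_{L}\SL_{2})_{\QQ} \cong \Pic((\PP^{1})^{n})_{\QQ}.
\]
Its rank is $n-k = n$ by Proposition \ref{prop:picardgroup}(2), and since $\Pic((\PP^{1})^{n})_{\QQ} \cong \QQ^{n}$ this is the maximal possible value.

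For the generation statement I would recall two facts established earlier. From the discussion of the rational Picard group, $\Pic((\PP^{1})^{n})_{\QQ}$ is generated by the classes $\cO(e_{i})$. From the proof of Proposition \ref{prop:picardgroup}, for any three distinct indices $i,j,k$ we have the identity
\[
	\cO(e_{i}) = \tfrac{1}{2}\cO(\Delta_{\{i,j\}}+\Delta_{\{i,k\}}-\Delta_{\{j,k\}}),
\]
and such a triple exists because $n \ge 5$. Under the isomorphism $D_{\{i,j\}} \mapsto \Delta_{\{i,j\}}$ of part (1), this writes each generator $\cO(e_{i})$ as a rational combination of the classes $D_{\{i,j\}}$, so the $D_{\{i,j\}}$ generate $\Pic((\PP^{1})^{n}\git_{L}\SL_{2})_{\QQ}$.

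I do not expect a genuine obstacle here, since the corollary is a direct specialization of Proposition \ref{prop:picardgroup}. The only point needing care is the bookkeeping check that the maximal-stable-locus hypothesis eliminates every unstable divisor, so that the quotient in part (1) is trivial; once that is observed, both the isomorphism and the generation by the $D_{\{i,j\}}$ follow immediately.
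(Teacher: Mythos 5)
Your proof is correct and is essentially the paper's own (implicit) argument: the paper states this corollary without proof precisely because, as you observe, the maximal-stable-locus hypothesis makes $k=0$ in Proposition \ref{prop:picardgroup}, so the quotient is trivial and the identity $\cO(e_{i}) = \tfrac{1}{2}\cO(\Delta_{\{i,j\}}+\Delta_{\{i,k\}}-\Delta_{\{j,k\}})$ gives generation by the $D_{\{i,j\}}$.
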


\subsection{The effective cone}

The following proposition is a translation of a result in the classical invariant theory.

\begin{proposition}\label{prop:effectiveconeGIT}
Suppose that $n \ge 5$. Let $L$ be a general $\QQ$-linearization $\cO(a_{1}, \cdots, a_{n})$ on $(\PP^{1})^{n}$. Then the effective cone $\mathrm{Eff}((\PP^{1})^{n}\git_{L}\SL_{2})$ of $(\PP^{1})^{n}\git_{L}\SL_{2}$ is generated by $\{D_{\{i,j\}}\;|\; 1 \le i < j \le n, a_{i}+a_{j} < a/2\}$. 
\end{proposition}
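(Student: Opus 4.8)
The plan is to show that $\mathrm{Eff}((\PP^1)^n\git_L\SL_2)$ is spanned by the divisors $D_{\{i,j\}}$ with $a_i+a_j<a/2$, reducing the statement to a computation about $\SL_2$-invariant sections on $(\PP^1)^n$. The effective cone is generated by the classes of irreducible effective divisors, so the real content is: every irreducible effective divisor on the quotient is a nonnegative combination of the $D_{\{i,j\}}$'s. First I would use the isomorphism $\Pic((\PP^1)^n\git_L\SL_2)_\QQ \cong \Pic(((\PP^1)^n)^s)^{\SL_2}_\QQ$ established via $\pi^*$ in Proposition \ref{prop:picardgroup}, together with the fact (also from that proof) that $\Pic((\PP^1)^n)_\QQ \cong \Pic((\PP^1)^n)^{\SL_2}_\QQ$. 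This lets me pull an effective divisor $D$ on the quotient back to an $\SL_2$-invariant effective divisor $\pi^*D$ on the stable locus, and then take its closure to an invariant effective divisor on all of $(\PP^1)^n$.

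Next I would exploit the very simple structure of $\Pic((\PP^1)^n)$. An irreducible effective divisor on $(\PP^1)^n$ is the zero locus of a multihomogeneous section, i.e. a nonzero element of $H^0((\PP^1)^n, \cO(b_1,\dots,b_n))$ for some $b_i\ge 0$; such a section is (up to scalar) a polynomial multihomogeneous of degree $b_i$ in the $i$-th pair of homogeneous coordinates. The constraint is $\SL_2$-invariance: the divisor, or rather a line bundle power carrying it, must be $\SL_2$-invariant. The classical first fundamental theorem of invariant theory for $\SL_2$ says that the ring of $\SL_2$-invariants of a sum of copies of the standard representation is generated by the $2\times 2$ determinants (brackets) $[ij] = x_i y_j - x_j y_i$ formed from pairs of the binary forms. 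I would invoke this: any $\SL_2$-invariant section is a polynomial in the brackets $[ij]$, and the vanishing locus of the bracket $[ij]$ is exactly $\Delta_{\{i,j\}}$, whose image is $D_{\{i,j\}}$. Hence any invariant effective divisor is, as a divisor class, a nonnegative integer combination $\sum_{i<j} c_{\{i,j\}}\,\Delta_{\{i,j\}}$ with $c_{\{i,j\}}\ge 0$.

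The final step is to descend this combination to the quotient and discard the unstable pieces. On the stable locus the components $\Delta_{\{i,j\}}$ with $a_i+a_j\ge a/2$ do not meet $((\PP^1)^n)^s$ (for such pairs the divisor lies in the unstable locus by Theorem \ref{thm:stability}), so their restriction to $((\PP^1)^n)^s$ is trivial, hence they drop out under $\iota^*$. What survives is $\sum_{a_i+a_j<a/2} c_{\{i,j\}}\,\Delta_{\{i,j\}}$ with nonnegative coefficients, which descends under $(\pi^*)^{-1}$ to $\sum c_{\{i,j\}}\,D_{\{i,j\}}$. This exhibits the original class $D$ as a nonnegative combination of the stated generators, proving the containment $\mathrm{Eff}\subset \langle D_{\{i,j\}} : a_i+a_j<a/2\rangle$; the reverse containment is immediate since each $D_{\{i,j\}}$ is visibly effective.

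\emph{The main obstacle} I anticipate is the descent bookkeeping rather than the invariant theory: I must be careful that pulling back an effective divisor on the quotient to $((\PP^1)^n)^s$, taking its closure, and then asserting $\SL_2$-invariance of an appropriate tensor power is legitimate — this uses normality of $(\PP^1)^n$, triviality of $\Hom(\SL_2,\CC^*)$, and Kempf's descent lemma exactly as in the proof of Proposition \ref{prop:picardgroup}, so those inputs are already available. A secondary subtlety is that the brackets $[ij]$ generate the invariants only after passing to a suitable multiple of $L$ (sections of $\cO(b_1,\dots,b_n)$ need $\sum b_i$ even and the weights balanced), but this only rescales the divisor class and does not affect the cone, so it causes no real difficulty.
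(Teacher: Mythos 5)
Your proposal is correct and follows essentially the same route as the paper's proof: pull the effective divisor back to the stable locus, take the closure $\overline{\pi^{*}D}$ in $(\PP^{1})^{n}$, invoke the first fundamental theorem of invariant theory to express the defining invariant section in terms of the brackets $(s_{i}t_{j}-s_{j}t_{i})$ whose zero loci are the $\Delta_{\{i,j\}}$, and descend the resulting nonnegative combination to the quotient. The only cosmetic difference is how the unstable diagonals are removed --- you let them die under restriction to the stable locus via the quotient description of $\Pic$ from Proposition \ref{prop:picardgroup}, whereas the paper argues they cannot occur in the decomposition because $\overline{\pi^{*}D}$ has no unstable components --- and the two arguments are interchangeable.
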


\begin{proof}
Let $D$ be an effective divisor on $(\PP^{1})^{n}\git_{L}\SL_{2}$. Then $\pi^{*}D$ is an $\SL_{2}$-invariant divisor on $((\PP^{1})^{n})^{s}$. By taking its closure in $(\PP^{1})^{n}$, we have an $\SL_{2}$-invariant divisor $\Delta := \overline{\pi^{*}(D)}$ on $(\PP^{1})^{n}$. Then none of irreducible components of $\Delta$ is in $\{\Delta_{\{i,j\}}\;|\; 1\le i < j \le n, a_{i}+a_{j} \ge a/2\}$, since they are disjoint from $((\PP^{1})^{n})^{s}$.

If we denote the homogeneous coordinates of the $i$-th factor of $(\PP^{1})^{n}$ by $[s_{i}:t_{i}]$, then by the first fundamental theorem of invariant theory (\cite[Section 2]{HMSV09a}), for any line bundle $E$ on $(\PP^{1})^{n}$, every $\SL_{2}$-invariant element of $\mathrm{H}^{0}(E)$ is generated by products of $(s_{i}t_{j}-s_{j}t_{i})$, which is precisely $\Delta_{\{i,j\}}$. In particular, $\Delta \in \mathrm{H}^{0}(\cO(\Delta))^{\SL_{2}}$ is an effective linear combination $\sum c_{\{i,j\}}\Delta_{\{i,j\}}$ for some $\Delta_{\{i,j\}}$ and $c_{\{i,j\}} > 0$. Moreover, on this linear combination $\Delta_{\{i,j\}}$ with $a_{i}+a_{j} \ge a/2$ does not appear since they are unstable and $\Delta$ does not have such components. Now each $\Delta_{\{i,j\}}$ descends to $D_{\{i,j\}}$. Thus $D = \sum c_{\{i,j\}}D_{\{i,j\}}$ with $a_{i}+a_{j} < a/2$. 

In summary, every effective divisor on $(\PP^{1})^{n}\git_{L}\SL_{2}$ is an effective linear combination of $\{D_{\{i,j\}}\;|\; 1 \le i < j \le n, a_{i}+a_{j} < a/2\}$.
\end{proof}

\begin{corollary}
Let $L = \cO(a_{1}, \cdots, a_{n})$ be a general $\QQ$-linearization with a maximal stable locus. Then $\mathrm{Eff}((\PP^{1})^{n}\git_{L}\SL_{2})$ has precisely $2n$ facets, namely,
\[
	P_{i} := \mathrm{Span}\{D_{\{i,j\}}\;|\; j \ne i\}, \quad 1 \le i \le n
\]
and 
\[
	N_{i} := \mathrm{Span}\{D_{\{j,k\}}\;|\; j, k \ne i\}, \quad 1 \le i \le n.
\]
\end{corollary}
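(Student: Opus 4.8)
The plan is to transport the whole problem into $\QQ^n$, where $\mathrm{Eff}$ becomes the cone over the hypersimplex $\Delta(2,n)$, and then read off its facets. By the preceding corollary, since $L$ has a maximal stable locus we have $\Pic((\PP^1)^n\git_L\SL_2)_\QQ \cong \Pic((\PP^1)^n)_\QQ \cong \QQ^n$ under $D_{\{i,j\}}\mapsto \cO(e_i+e_j)$, and moreover $a_i+a_j < a/2$ for \emph{every} pair $\{i,j\}$. Hence Proposition \ref{prop:effectiveconeGIT} gives that all $\binom{n}{2}$ classes $D_{\{i,j\}}$ generate the effective cone, so that
\[
	\mathrm{Eff}((\PP^1)^n\git_L\SL_2) = C := \mathrm{cone}\{e_i+e_j \;|\; 1\le i<j\le n\}\subset \QQ^n.
\]
Since each generator $e_i+e_j$ lies on the affine hyperplane $\{\sum_k x_k = 2\}$, and these generators are exactly the $0/1$-vectors with two nonzero entries, $C$ is the cone over the hypersimplex $\Delta(2,n)$, whose facet structure I want to make explicit.

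The first and main step is to produce the hyperplane (inequality) description of $C$. I claim
\[
	C = P := \Big\{x\in\QQ^n \;\Big|\; x_i\ge 0 \text{ and } 2x_i\le \textstyle\sum_{k}x_k \text{ for all } i\Big\}.
\]
The inclusion $C\subseteq P$ is immediate, since every generator $e_i+e_j$ satisfies both families of inequalities. For the reverse inclusion I would use Farkas' lemma in the form $C = (C^{\vee})^{\vee}$, where the dual cone is $C^{\vee} = \{\phi\in\QQ^n \;|\; \phi_i+\phi_j\ge 0 \text{ for all } i<j\}$. The crucial observation is that any $\phi\in C^{\vee}$ has at most one negative coordinate; after relabelling so that $\phi_1=\min_k \phi_k$, either $\phi_1\ge 0$ (and then $\langle\phi,x\rangle\ge 0$ trivially for $x\in P$), or $\phi_1<0$, in which case $\phi_j\ge -\phi_1>0$ for all $j\ne 1$, whence for $x\in P$
\[
	\langle\phi,x\rangle = \phi_1 x_1 + \sum_{j\ne 1}\phi_j x_j \ge \phi_1 x_1 + (-\phi_1)\sum_{j\ne 1}x_j = -\phi_1\Big(\textstyle\sum_k x_k - 2x_1\Big)\ge 0.
\]
Thus every $x\in P$ pairs nonnegatively against every element of $C^{\vee}$, giving $P\subseteq C$ and hence $C=P$. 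This dualization is the step I expect to be the real obstacle: the containment $P\subseteq C$ is exactly the statement that a nonnegative weight vector with no coordinate exceeding half the total is realizable as the fractional degree sequence of edge-weights on $K_n$, and the clean way to see it is precisely this reduction to the ``at most one negative coordinate'' structure of $C^{\vee}$.

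It then remains to count facets. The cone $P$ is cut out by the $2n$ half-spaces $x_i\ge 0$ and $\sum_k x_k - 2x_i\ge 0$ for $i\in[n]$, and these $2n$ hyperplanes are pairwise distinct. Each defining inequality is facet-defining: the generators lying on $\{x_i=0\}$ are the $e_j+e_k$ with $j,k\ne i$, which span the coordinate hyperplane $\{x_i=0\}$ (of dimension $n-1$, using $n\ge 5$ so that each $e_j$ with $j\ne i$ lies in their span), and this face is exactly $N_i$; likewise the generators on $\{2x_i=\sum_k x_k\}$ are the $e_i+e_j$ with $j\ne i$, which span that hyperplane (its difference set yields all $e_j-e_k$ with $j,k\ne i$ together with the $e_i$-direction, giving dimension $n-1$), and this face is exactly $P_i$. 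Since the facets of a polyhedral cone are in bijection with the inequalities of an irredundant description, $C$ has precisely the $2n$ facets $\{P_i\}_{i=1}^n$ and $\{N_i\}_{i=1}^n$, which are the faces $C\cap\{2x_i=\sum_k x_k\}$ and $C\cap\{x_i=0\}$ respectively. Equivalently, these are the cones over the $2n$ facets $\{x_i=1\}$ and $\{x_i=0\}$ of the hypersimplex $\Delta(2,n)$, completing the proof.
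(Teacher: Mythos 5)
Your proof is correct, and it reaches the conclusion by a genuinely more self-contained route than the paper. Both arguments start from the same reduction: under the identification $\Pic((\PP^{1})^{n}\git_{L}\SL_{2})_{\QQ}\cong\QQ^{n}$ with $D_{\{i,j\}}\mapsto e_{i}+e_{j}$, the effective cone is the cone over the hypersimplex $\Delta(2,n)$. At that point the paper simply cites Kapranov (\cite[Proposition 1.2.5]{Kap93b}) for the facet structure of $\Delta(2,n)$, so its proof is essentially a two-line appeal to a known combinatorial result. You instead prove that facet structure from scratch: you establish the inequality description
\[
	\mathrm{cone}\{e_{i}+e_{j}\} \;=\; \Big\{x\in\QQ^{n}\;\Big|\;x_{i}\ge 0,\ 2x_{i}\le \textstyle\sum_{k}x_{k}\ \text{for all }i\Big\}
\]
via the dual cone $C^{\vee}=\{\phi\;|\;\phi_{i}+\phi_{j}\ge 0\}$ and the observation that any such $\phi$ has at most one negative coordinate (a correct and clean application of Farkas duality), and you then verify that each of the $2n$ inequalities is facet-defining by exhibiting $n-1$ spanning generators on each bounding hyperplane. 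Your identification of which generators lie on which hyperplane, and the resulting matching with $P_{i}$ and $N_{i}$, is accurate, and the final appeal to the bijection between facets of a full-dimensional polyhedral cone and the hyperplanes of an irredundant inequality description is legitimate since the $2n$ hyperplanes are pairwise distinct. What your approach buys is independence from the literature and an explicit $H$-description of $\mathrm{Eff}$ that is useful in its own right (e.g.\ for testing effectivity of a given class); what the paper's approach buys is brevity and the conceptual point that this cone is a well-studied polytope whose combinatorics (all faces, not just facets) can be quoted wholesale.
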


\begin{proof}
Take the hyperplane section $\sum a_{i} = 2$ in $\Pic((\PP^{1})^{n}\git_{L}\SL_{2})_{\QQ} \cong \Pic((\PP^{1})^{n})_{\QQ}$. Then the intersection with the effective cone generated by $\{D_{\{i,j\}}\}$ is the hypersimplex 
\[
	\Delta(2, n) = \{(a_{1}, \cdots, a_{n}) \in \QQ^{n}\;|\; 
	\sum_{i=1}^{n} a_{i} = 2, 0 \le a_{i} \le 1\}
\]
(\cite[Section 1]{Kap93b}). There is a one-to-one correspondence between the set of facets of $\mathrm{Eff}((\PP^{1})^{n}\git_{L}\SL_{2})$ and that of $\Delta(2, n)$. Now the statement follows from \cite[Proposition 1.2.5]{Kap93b}.
\end{proof}

\begin{remark}\label{rem:dualcurve}
When $L$ is a general linearization with a maximal stable locus, the construction of the dual curve for each facet of $\mathrm{Eff}((\PP^{1})^{n}\git_{L}\SL_{2})$ is easy. Since $(\PP^{1})^{n}\git_{L}\SL_{2}$ is naturally a moduli space of $n$-pointed smooth rational curves (\cite[Section 8]{Has03}), it suffices to construct a one-dimensional family of $n$-pointed smooth rational curves with appropriate stability condition described by $L$. 

First of all, take $n-1$ general lines $\ell_{2}, \cdots, \ell_{n}$ on $\PP^{2}$. Take a general point $x \in \PP^{2}- \cup \ell_{i}$. Blow-up $\PP^{2}$ at $x$  and let $\ell_{1}$ be the exceptional divisor. Then $\mathrm{Bl}_{x}\PP^{2} \cong \mathbb{F}_{1}$ is a $\PP^{1}$-bundle over $\ell_{1}$ and we can regard it as a family of $n$-pointed smooth rational curves on $C_{1} := \ell_{1}$. Because $a_{i}+a_{j} < a/2$, any two marked points can collide. Thus all fibers are stable. So $C_{1}$ is a curve on $(\PP^{1})^{n}\git_{L}\SL_{2}$. Then $C_{1} \cdot D_{\{1,j\}} = 0$ and $C_{1} \cdot D_{\{i,j\}} = 1$ for $2 \le i, j \le n$. Therefore $C_{1}$ is a dual curve for $P_{1}$.

Now consider a trivial family $\pi : \PP^{1} \times \PP^{1} \to \PP^{1}$ with $(n-1)$ distinct constant sections $\sigma_{2}, \cdots, \sigma_{n}$, and a diagonal section $\sigma_{1}$. Then $(\pi : \PP^{1} \times \PP^{1} \to \PP^{1}, \sigma_{1}, \cdots, \sigma_{n})$ is a family of pointed curves over $B_{1} := \PP^{1}$. So $B_{1}$ is a curve on $(\PP^{1})^{n}\git_{L}\SL_{2}$. Now $B_{1} \cdot D_{\{1,j\}} = 1$ and $B_{1} \cdot D_{\{i,j\}} = 0$ for $2 \le i, j \le n$. Therefore $B_{1}$ is the dual curve for $N_{1}$. 
\end{remark}

We close this section with a new notation for line bundles on $(\PP^{1})^{n}\git_{L}\SL_{2}$.

\begin{definition}
Suppose that $n \ge 5$. Let $L$ be a general linearization. We denote a $\QQ$-line bundle $E$ on $(\PP^{1})^{n}\git_{L}\SL_{2}$ by $\overline{\cO}(b_{1}, \cdots, b_{n})$ (or $\overline{\cO}(\sum b_{i}e_{i})$) if $E$ maps to the equivalent class of $\cO(b_{1}, \cdots, b_{n}) = \cO(\sum b_{i}e_{i})$ under the isomorphism
\[
	\Pic((\PP^{1})^{n}\git_{L}\SL_{2})_{\QQ} \cong 
	\Pic((\PP^{1})^{n})_{\QQ}/\langle \Delta_{\{i,j\}}\;|\;
	a_{i} + a_{j} \ge a/2\rangle
\]
in Proposition \ref{prop:picardgroup}. 
\end{definition}

Note that if there is an unstable divisor, the expression is not unique. For instance, if $\Delta_{\{1,2\}}$ is unstable, $\overline{\cO}(b_{1}, \cdots, b_{n}) = \overline{\cO}(b_{1}+c, b_{2}+c, b_{3}, \cdots, b_{n})$ for any $c \in \QQ$.

\section{Background on conformal blocks}\label{sec:conformalblock}

In last three decades, the space of conformal blocks, which are fundamental objects in conformal field theory, have been studied by many mathematicians and physicists. Although the original construction is using the representation theory of affine Lie algebras, in this section we give an elementary definition of the simplest case - $\mathfrak{sl}_{2}$ conformal blocks on $\PP^{1}$ - and their algebraic/combinatorial realizations. Because we do not give the usual definition, we leave some references for the reader's convenience. For the general definition of conformal blocks, see \cite{Uen08}. The connection with the moduli space of parabolic vector bundles, see \cite{Pau96}. 

\subsection{A quick definition of $\mathfrak{sl}_{2}$ conformal blocks}
\label{ssec:quickdef}

In this section, we review an elementary definition of $\mathfrak{sl}_{2}$ conformal blocks on $\PP^{1}$, described in \cite[Section 1]{Loo09}. For the equivalence of the following definition and the original one, consult \cite[Proposition 4.1]{Bea96a}. 

We begin with some notational conventions. In this section, we write a sequence $(k_{1}, \cdots, k_{n})$ as $\mathbf{k}$. $|\mathbf{k}| = \sum_{i}k_{i}$ and $\mathbf{k}! = \prod_{i}k_{i}!$. 

For any nonnegative integer $k$, let $V_{k} = \mathrm{H}^{0}(\PP^{1}, \cO(k))$ be an irreducible $\SL_{2}$-representation with highest weight $k$. The vector space $V_{k}$ is identified with $\CC[x,y]_{k}$, the space of homogeneous polynomials of degree $k$. The infinitesimal $\mathfrak{sl}_{2}$-action on $\CC[x,y]_{k}$ is given by $e = x\partial_{y}, f = y\partial_{x}, h = x\partial_{x} - y \partial_{y}$ for the standard basis $e, f, h$ of $\mathfrak{sl}_{2}$. The highest weight vector of $V_{k}$ is $x^{k}$ and $f^{j}x^{k} = \frac{k!}{(k-j)!}x^{k-j}y^{j}$. We may dehomogenize it by taking $x = 1$. Then $V_{k}$ is identified $\CC[y]_{\le k}$ (the space of polynomials of degree at most $k$) and the action of $e$ is given by $\partial_{y}$.

For a sequence of nonnegative integers $\mathbf{k} = (k_{1}, \cdots, k_{n})$, let $V_{\mathbf{k}} = V_{k_{1}} \otimes \cdots \otimes V_{k_{n}}$, with a natural diagonal $\SL_{2}$-action. Set $2N = |\mathbf{k}|$, for a half integer $N$. There is an isomorphism of $\SL_{2}$-representations $\phi : V_{\mathbf{k}} \to \CC[y_{1}, \cdots, 	y_{n}]_{\le \mathbf{k}}$, where $\CC[y_{1}, \cdots, y_{n}]_{\le \mathbf{k}}$ is the space of polynomials with degree $\le k_{i}$ with respect to $y_{i}$. Then we can take a highest weight vector $v \in V_{\mathbf{k}}$ such that $\phi(v) = 1$.

Let $e_{i}$ (resp. $f_{i}$, $h_{i}$) be the operator on $V_{\mathbf{k}}$ which acts on the $i$-th factor $V_{k_{i}}$ as $e$ (resp. $f$, $h$) and trivially acts on the other factors. On $V_{\mathbf{k}}$, $e = \sum_{i}e_{i}$ and so on. It is straightforward to check that $\phi(f^{\mathbf{j}}v) = \frac{\mathbf{k}!}{(\mathbf{k-j})!}y^{\mathbf{j}}$.

By the action of $h \in \mathfrak{sl}_{2}$, we can decompose $V_{\mathbf{k}}$ into eigenspaces $V_{\mathbf{k}}(\lambda)$ with the eigenvalue $\lambda$. Note that a vector $w \in V_{\mathbf{k}}$ is $\SL_{2}$-invariant if and only if $w \in V_{\mathbf{k}}(0)$ and $e\cdot w = 0$. 

\begin{definition}
Let $\vec{p} = (p_{1}, \cdots, p_{n})$ be a sequence of $n$ distinct points on $\CC \subset \PP^{1}$. Fix an integer $\ell \ge 0$. Let $\mathbf{k} = (k_{1}, \cdots, k_{n})$ be a sequence of nonnegative integers. The space of \textbf{$\mathfrak{sl}_{2}$-conformal blocks of level $\ell$ relative to $\vec{p}$ in $V_{\mathbf{k}}$} is the subspace of $\SL_{2}$-invariants of $V_{\mathbf{k}}$ which is annihilated by the operator $(\sum p_{i}e_{i})^{\ell+1}$. We denote it by $\mathbb{V}_{\ell}(k_{1}, \cdots, k_{n})$.
\end{definition}

\begin{remark}\label{rem:basicpropertiesCB}
\begin{enumerate}
\item Note that there is a natural inclusion 
\[
	\mathbb{V}_{\ell}(k_{1}, \cdots, k_{n}) \subset 
	\mathbb{V}_{\ell+1}(k_{1}, \cdots, k_{n}).
\]
Furthermore, if $\ell \ge N = |\mathbf{k}|/2$, $\mathbb{V}_{\ell}(k_{1}, \cdots, k_{n}) \cong V_{\mathbf{k}}^{\SL_{2}} \cong \mathrm{H}^{0}((\PP^{1})^{n}, \cO(k_{1}, \cdots, k_{n}))^{\SL_{2}}$ because the operator $(\sum p_{i}e_{i})^{N+1}$ is trivial. 
\item For the natural $S_{n}$-action permuting $n$ irreducible factors of $V_{\mathbf{k}}$,
\[
	\mathbb{V}_{\ell}(k_{1}, \cdots, k_{n}) \cong
	\mathbb{V}_{\ell}(k_{\sigma(1)}, \cdots, k_{\sigma(n)})
\]
for every $\sigma \in S_{n}$.
\item If $k_{i} > \ell$ for some $i$, $\mathbb{V}_{\ell}(k_{1}, \cdots, k_{n}) = 0$. 
\item Since $V_{0} \cong \CC$, there is a natural isomorphism 
\[
	\mathbb{V}_{\ell}(k_{1}, \cdots, k_{n}, 0)
	\cong \mathbb{V}_{\ell}(k_{1}, \cdots, k_{n}).
\]
In the physics literature, this isomorphism is called the \textbf{propagation of vacua}.
\end{enumerate}
\end{remark}

The following lemma provides an elementary description of $\mathfrak{sl}_{2}$-conformal blocks. 

\begin{lemma}[\protect{\cite[Lemma 1.3]{Loo09}}]\label{lem:confblock}
An element $\beta \in V_{\mathbf{k}}^{\SL_{2}}$ is in $\mathbb{V}_{\ell}(k_{1}, \cdots, k_{n})$ (relative to $\vec{p} \in \CC^{n} \subset (\PP^{1})^{n}$) if and only if $\phi(\beta)$ has zero of order at least $N - \ell$ at $\vec{p}$.
\end{lemma}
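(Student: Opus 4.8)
The plan is to transport the whole statement through the isomorphism $\phi$ into a question about a single homogeneous polynomial, and then trivialize the differential operator by a linear change of coordinates adapted to $\vec{p}$. First I would record what each ingredient becomes under $\phi$. Since $\phi$ identifies the $i$-th factor $V_{k_i}$ with $\CC[y_i]_{\le k_i}$ in such a way that $e_i$ acts as $\partial_{y_i}$, it carries the operator $\sum_i p_i e_i$ to the constant-coefficient directional derivative $D_{\vec{p}} := \sum_i p_i \partial_{y_i}$. Hence for $\beta \in V_{\mathbf{k}}^{\SL_2}$ one has $\beta \in \mathbb{V}_{\ell}(k_1, \ldots, k_n)$ if and only if $D_{\vec{p}}^{\,\ell+1}\phi(\beta) = 0$. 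Moreover the weight-zero part of $\SL_2$-invariance fixes the degree: expanding in the basis $f^{\mathbf{j}}v$, a vector lies in $V_{\mathbf{k}}(0)$ exactly when $|\mathbf{j}| = N$, so $P := \phi(\beta)$ is homogeneous of degree precisely $N = |\mathbf{k}|/2$ (if $|\mathbf{k}|$ is odd there are no invariants and the statement is vacuous, so assume $N \in \ZZ_{\ge 0}$). Thus the lemma reduces to the purely algebraic claim that, for $P$ homogeneous of degree $N$ and $\vec{p} \ne 0$,
\[
	D_{\vec{p}}^{\,\ell+1}P = 0 \iff \mathrm{ord}_{\vec{p}}(P) \ge N - \ell.
\]

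The key step is to diagonalize $D_{\vec{p}}$. Because the $p_i$ are distinct, at most one vanishes, so $\vec{p} \ne 0$ and I may pick linear coordinates $z_1, \ldots, z_n$ with $z_1$ dual to $\vec{p}$, i.e. $z_1(\vec{p}) = 1$ and $z_2(\vec{p}) = \cdots = z_n(\vec{p}) = 0$. In these coordinates the directional derivative along $\vec{p}$ is simply $D_{\vec{p}} = \partial_{z_1}$, and the point $\vec{p}$ becomes $(1, 0, \ldots, 0)$. Consequently $D_{\vec{p}}^{\,\ell+1}P = \partial_{z_1}^{\ell+1}P = 0$ if and only if $\deg_{z_1}P \le \ell$.

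It then remains to identify $\deg_{z_1}P$ with $N - \mathrm{ord}_{\vec{p}}(P)$. Writing $P = \sum_d z_1^{\,d}\,P_d(z_2, \ldots, z_n)$ with each $P_d$ homogeneous of degree $N - d$, put $d_0 := \deg_{z_1}P = \max\{d : P_d \ne 0\}$. Expanding about $\vec{p}$ through $z_1 = 1 + w_1$ and $z_i = w_i$ for $i \ge 2$ gives $P = \sum_d (1 + w_1)^d P_d(w_2, \ldots, w_n)$; the summand indexed by $d$ contributes only in $w$-degrees $\ge N - d$, with lowest-degree part equal to $P_d(w_2, \ldots, w_n)$. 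Hence the minimal total degree that occurs is $N - d_0$, and its degree-$(N - d_0)$ part is exactly $P_{d_0} \ne 0$. Combining this with the previous paragraph yields $D_{\vec{p}}^{\,\ell+1}P = 0 \iff d_0 \le \ell \iff \mathrm{ord}_{\vec{p}}(P) \ge N - \ell$, which is the claim.

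The only genuinely delicate point is the last no-cancellation argument: one must be sure that the minimal-degree term of the Taylor expansion at $\vec{p}$ is not accidentally killed by cross-contributions from lower powers of $z_1$. The bookkeeping above resolves this cleanly, since each $z_1^{d}P_d$ begins in $w$-degree $N - d$, which is strictly larger than $N - d_0$ whenever $d < d_0$, so no term can interfere with $P_{d_0}$. Everything else is a direct translation through $\phi$; I note in passing that the remaining relation $\sum_i \partial_{y_i}\phi(\beta) = 0$ coming from $\SL_2$-invariance is not actually needed for this particular equivalence.
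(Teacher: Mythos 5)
Your proof is correct. There is, however, nothing in the paper to compare it against: the lemma is quoted from Looijenga (\cite[Lemma 1.3]{Loo09}) and the paper gives no proof of it, so your argument serves as a self-contained substitute for the citation (and is essentially the argument of the cited source). All three steps check out: $\phi$ intertwines $\sum_i p_i e_i$ with the directional derivative $D_{\vec{p}} = \sum_i p_i \partial_{y_i}$; invariance forces $\beta$ into the zero weight space $V_{\mathbf{k}}(0)$, which $\phi$ carries onto the span of the monomials $y^{\mathbf{j}}$ with $|\mathbf{j}| = N$, so $P = \phi(\beta)$ is homogeneous of degree $N$ (with the odd-$|\mathbf{k}|$ case vacuous, as you note); and in linear coordinates where $\vec{p} = (1,0,\dots,0)$ and $D_{\vec{p}} = \partial_{z_1}$, your bookkeeping correctly establishes $\mathrm{ord}_{\vec{p}}(P) = N - \deg_{z_1}P$, the crucial no-cancellation point being that the summand $z_1^{d}P_d$ contributes only in $w$-degrees at least $N - d$, so nothing can interfere with $P_{d_0}$ in degree $N - d_0$. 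As a minor streamlining, homogeneity gives $P(\vec{y} + t\vec{p}) = \sum_m P_m(\vec{y})\,t^{N-m}$, where $P_m$ denotes the degree-$m$ Taylor component of $P$ at $\vec{p}$; since $D_{\vec{p}}^{\,\ell+1}P = 0$ exactly when this polynomial in $t$ has degree at most $\ell$ for every $\vec{y}$, the identification $\deg_{z_1}P = N - \mathrm{ord}_{\vec{p}}(P)$ (and hence the lemma) follows in one line without choosing coordinates.
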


From the identification $V_{k} \cong \CC[y]_{\le k}$ and the description of $\mathfrak{sl}_{2}$-action as differential operators, it is straightforward to see that the map
\[
	V_{\mathbf{k}} \otimes V_{\mathbf{j}} \hookrightarrow 
	V_{\mathbf{k+j}}
\]
given by $f \otimes g \mapsto fg$
induces 
\[
	V_{\mathbf{k}}^{\SL_{2}} \otimes V_{\mathbf{j}}^{\SL_{2}}
	\hookrightarrow V_{\mathbf{k+j}}^{\SL_{2}}.
\]
Furthermore, by the identification of level $\ell$ conformal blocks as polynomials vanishing at $\vec{p}$ with multiplicity $N - \ell$ in Lemma \ref{lem:confblock}, we have the product map on the level of conformal blocks:
\[
	\mathbb{V}_{\ell}(a_{1}, \cdots, a_{n}) \otimes 
	\mathbb{V}_{m}(b_{1}, \cdots, b_{n}) \hookrightarrow
	\mathbb{V}_{\ell+m}(a_{1}+b_{1}, \cdots, a_{n}+b_{n}).
\]

\subsection{Factorization and some combinatorial results on $\mathfrak{sl}_{2}$ conformal blocks}

The rank of $\mathfrak{sl}_{2}$-conformal blocks can be computed by the following inductive formula. 
\begin{proposition}[Fusion rule and factorization rule, \protect{\cite[Section 4]{Bea96a}}]\label{prop:factorization}
Let $k_{1}, \cdots, k_{n}$ be $n$ nonnegative integers such that $k_{i} \le \ell$. 
\begin{enumerate}
\item The rank of $\mathbb{V}_{\ell}(k_{1})$ is one when $k_{1} = 0$. Otherwise the rank is zero.
\item The rank of $\mathbb{V}_{\ell}(k_{1}, k_{2})$ is one when $k_{1} = k_{2}$. Otherwise the rank is zero. 
\item $\rank \mathbb{V}_{\ell}(k_{1}, k_{2}, k_{3}) = 1$ if and only if $\sum k_{i}$ is even, $\sum k_{i}\le 2\ell$ and $k_{j} \le \sum k_{i}/2$. Otherwise the rank is zero.
\item For any $1 \le t \le n$, 
\[
	\rank \mathbb{V}_{\ell}(k_{1}, \cdots, k_{n}) = 
	\sum_{j = 0}^{\ell}
	\left(\rank \mathbb{V}_{\ell}(k_{1}, \cdots, k_{t}, j)\right)
	\left(\rank \mathbb{V}_{\ell}(j, k_{t+1}, \cdots, k_{n})\right).
\]
\end{enumerate}
\end{proposition}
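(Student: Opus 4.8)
The plan is to treat parts (1)--(3) as direct computations via classical $\SL_2$-invariant theory combined with the vanishing-order criterion of Lemma~\ref{lem:confblock}, and to treat part (4), the genuine factorization statement, by a degeneration (sewing) argument.

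First I would dispose of (1)--(3). By Lemma~\ref{lem:confblock}, $\mathbb{V}_\ell(k_1,\dots,k_n)$ is the subspace of $V_{\mathbf{k}}^{\SL_2}$ consisting of those invariants whose image under $\phi$ vanishes to order at least $N-\ell$ at $\vec{p}$, where $2N=|\mathbf{k}|$. For $n\le 3$ the first fundamental theorem forces every invariant to be a linear combination of bracket monomials, and a dimension count shows $V_{\mathbf{k}}^{\SL_2}$ is at most one-dimensional, spanned (when nonzero) by a single product $\prod_{i<j}(y_i-y_j)^{d_{ij}}$ whose nonnegative integer exponents are determined by $\sum_{j\ne i}d_{ij}=k_i$. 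This linear system is solvable exactly when $\sum k_i$ is even and $k_j\le\tfrac12\sum k_i$ for all $j$: for $n=1$ this forces $k_1=0$, for $n=2$ it forces $k_1=k_2$, and for $n=3$ it is precisely the parity and triangle conditions of (3). Since the $p_i$ are distinct, the generator takes the nonzero value $\prod(p_i-p_j)^{d_{ij}}$ at $\vec{p}$ and so vanishes there to order $0$; hence the extra conformal-block condition $0\ge N-\ell$ is vacuous exactly when $\sum k_i\le 2\ell$. For $n\le 2$ this is automatic from $k_i\le\ell$, while for $n=3$ it is the remaining hypothesis in (3). In every case we recover the stated ranks.

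The heart of the proposition is the factorization rule (4). My approach would be a degeneration argument. Fixing the splitting $[n]=\{1,\dots,t\}\sqcup\{t+1,\dots,n\}$, I would consider a one-parameter family of configurations in which $p_{t+1},\dots,p_n$ are scaled toward a common limit, so that the $n$-pointed $\PP^1$ degenerates to a nodal curve with two $\PP^1$-components, one carrying $p_1,\dots,p_t$ together with the node, the other carrying the node together with $p_{t+1},\dots,p_n$. The key input is that the ranks of conformal blocks are locally constant in such a family, so the rank may be computed in the limit, and the factorization theorem then identifies the limiting space with $\bigoplus_{j=0}^{\ell}\mathbb{V}_\ell(k_1,\dots,k_t,j)\otimes\mathbb{V}_\ell(j,k_{t+1},\dots,k_n)$, the sum running over the admissible label $V_j$ attached to the node. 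Taking dimensions yields the recursion in (4).

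The main obstacle is exactly this last step: the local constancy of the rank and the factorization isomorphism are not elementary consequences of the polynomial description, and a fully self-contained proof would require reconstructing part of the Tsuchiya--Ueno--Yamada machinery. I would therefore either invoke the factorization theorem directly (as in \cite[Section~4]{Bea96a}, or \cite{Uen08}), or, staying within the elementary framework of \cite{Loo09}, reduce (4) to a combinatorial identity: once parts (1)--(3) supply the trivalent fusion rule, the number of admissible edge-labelings of the caterpillar tree with leaves $k_1,\dots,k_n$ visibly satisfies the stated recursion upon cutting the tree at its $t$-th internal edge. Proving that this combinatorial count genuinely computes $\rank\mathbb{V}_\ell$ is where the real difficulty resides, and is precisely what the combinatorial models developed in the following sections are designed to justify.
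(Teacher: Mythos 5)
Your proposal is correct, but note that the paper itself offers no proof of this proposition at all: it is quoted wholesale from Beauville \cite[Section 4]{Bea96a}, which is why the citation appears in the proposition's header. Your treatment of (1)--(3) is therefore a genuine (and sound) addition: combining Lemma \ref{lem:confblock} with the first fundamental theorem, the $\SL_2$-invariants for $n \le 3$ are spanned by the single bracket monomial $\prod_{i<j}(y_i-y_j)^{d_{ij}}$ with $\sum_{j\ne i}d_{ij}=k_i$, which is solvable in nonnegative integers exactly under the parity and triangle conditions; and since this generator is nonvanishing at the distinct points $\vec{p}$, the vanishing-order condition of Lemma \ref{lem:confblock} degenerates to $N\le\ell$, i.e.\ $\sum k_i\le 2\ell$, recovering (1)--(3) precisely. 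For (4) you correctly identify where the real content lies: the factorization theorem is not accessible from the elementary polynomial model, and must be imported from the conformal-field-theory literature (Tsuchiya--Ueno--Yamada, \cite{Uen08}, or \cite[Section 4]{Bea96a}) --- which is exactly what the paper does, for all four parts. One caution about your proposed fallback: within this paper's logical structure, the double-sequence/boxed-Catalan-path count of Proposition \ref{prop:CBandboxCatpath} is \emph{deduced from} part (4) (its proof checks the base cases $n\le 3$ and verifies that the count satisfies the same recursion), so reducing (4) to that combinatorial model would be circular here; your closing sentence suggests you sense this, and indeed the citation route is the only non-circular option available in this framework.
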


The rank of $\mathfrak{sl}_{2}$-conformal blocks is indeed the number of certain combinatorial objects. Fix a positive integer $\ell$ and let $\mathbf{k} = (k_{1}, \cdots, k_{n})$ be a sequence of integers such that $0 \le k_{i} \le \ell$. 

\begin{definition}[D. Swinarski]\label{def:dblseq}
A \textbf{double sequence} of level $\ell$ and shape $\mathbf{k}$ is a $2 \times n$ matrix
\[
	DS = \left(\begin{array}{cccc}x_{1} & x_{2} & \cdots & x_{n}\\
	y_{1} & y_{2} & \cdots & y_{n}\end{array}\right)
\]
such that:
\begin{enumerate}
\item Each $x_{j}$ and $y_{j}$ is an integer between $0$ and $\ell$;
\item $x_{j} + y_{j} = k_{j}$ for $1 \le j \le n$;
\item For each $1 \le i \le n$,
\[
	x_{i} + \sum_{j=1}^{i-1}(x_{j}-y_{j}) \le \ell;
\]
\item For each $1 \le i \le n$,
\[
	-y_{i} + \sum_{j=1}^{i-1}(x_{j}-y_{j}) \ge 0;
\]
\item $\sum_{j=1}^{n} x_{j} = \sum_{j=1}^{n} y_{j}$.
\end{enumerate}
For a double sequence $DS$, the \textbf{height} $h(DS)$ is the maximum of $x_{i}+\sum_{j=1}^{i-1}(x_{j}-y_{j})$ for $1 \le i \le n-1$. Note that $x_{1} \le h(DS) \le \ell$. 
\end{definition}

\begin{remark}\label{rem:implications}
Definition \ref{def:dblseq} implies several nontrivial implications. First of all, if $\sum_{j=1}^{i-1}(x_{j}-y_{j}) = 0$, then $-y_{i} \ge 0$ by (4). Since $y_{i}$ is nonnegative by (1), $y_{i} = 0$. In particular, $y_{1} = 0$. Also if $\sum_{j=1}^{i}(x_{j}-y_{j}) = 0$, then 
\[
	0 = \sum_{j=1}^{i}(x_{j}- y_{j}) = x_{i} - y_{i} + 
	\sum_{j=1}^{i-1}(x_{j}-y_{j}) \ge x_{i}
\]
by (4). So $x_{i} = 0$ by (1) again. As a special case, $x_{n} = 0$. 
\end{remark}

We can visualize a double sequence by using a \textbf{boxed Catalan path}, introduced by B. Alexeev. For a double sequence $DS$, we can draw a path in the first quadrant of $\RR^{2}$ as the following. Start from the origin. For each $j$, draw $(1,1)$ vector $x_{j}$ times and draw $(1,-1)$ vector $y_{j}$ times. Then (3) and (4) imply that the path is lying on the region $0 \le y \le \ell$. By (5), the ending point is $(|\mathbf{k}|, 0)$. Remark \ref{rem:implications} says that if there is a point $(x, 0)$ at the end of $j$-th move, then the $j$-th move is a downward move and the $(j+1)$-th move is an upward move. The height $h(DS)$ is simply the height of the boxed Catalan path corresponding to $DS$. 

Indeed, (4) implies a stronger condition. For each move, we can draw a lower alternative path for each move by drawing $(1,-1)$ vectors first then drawing $(1,1)$ vectors. See Figure \ref{fig:boxedCatalanpath} for an example. The dashed part can be obtained by drawing $(1,-1)$ vectors first. Then the picture looks like a `boxed' path. Also note that if one of $x_{i}$ or $y_{i}$ is zero, then the corresponding box is simply a line segment. Now item (4) says that each lower corner of a box must have nonnegative height as well. 

\begin{figure}[!ht]
\[
	\left(\begin{array}{cccccc}
	2 & 1 & 1 & 0 & 1 & 0\\
	0 & 1 & 2 & 1 & 0 & 1
	\end{array}\right)
\]
\begin{center}
\begin{tikzpicture}[scale=0.6]
	\draw[line width=0.5pt] (0,0) -- (10, 0);
	\draw[line width=0.5pt] (0,1) -- (10, 1);
	\draw[line width=0.5pt] (0,2) -- (10, 2);
	\draw[line width=0.5pt] (0,3) -- (10, 3);
	\draw[line width=0.5pt] (0,4) -- (10, 4);
	\draw[line width=0.5pt] (0,0) -- (0, 4);
	\draw[line width=0.5pt] (1,0) -- (1, 4);
	\draw[line width=0.5pt] (2,0) -- (2, 4);
	\draw[line width=0.5pt] (3,0) -- (3, 4);
	\draw[line width=0.5pt] (4,0) -- (4, 4);
	\draw[line width=0.5pt] (5,0) -- (5, 4);
	\draw[line width=0.5pt] (6,0) -- (6, 4);
	\draw[line width=0.5pt] (7,0) -- (7, 4);
	\draw[line width=0.5pt] (8,0) -- (8, 4);
	\draw[line width=0.5pt] (9,0) -- (9, 4);
	\draw[line width=0.5pt] (10,0) -- (10, 4);
	\draw[line width=1pt] (0,0) -- (2, 2);
	\draw[line width=1pt] (2,2) -- (3, 3);
	\draw[line width=1pt] (3,3) -- (4, 2);
	\draw[line width=1pt] (4,2) -- (5, 3);
	\draw[line width=1pt] (5,3) -- (7,1);
	\draw[line width=1pt] (7,1) -- (8,0);
	\draw[line width=1pt] (8,0) -- (9,1);
	\draw[line width=1pt] (9,1) -- (10,0);
	\draw[dashed, line width=1pt] (2,2) -- (3,1);
	\draw[dashed, line width=1pt] (3,1) -- (4,2);
	\draw[dashed, line width=1pt] (4,2) -- (6,0);
	\draw[dashed, line width=1pt] (6,0) -- (7,1);
	\fill (2,2) circle (4pt);
	\fill (4,2) circle (4pt);
	\fill (7,1) circle (4pt);
	\fill (8,0) circle (4pt);
	\fill (9,1) circle (4pt);
	\fill (10,0) circle (4pt);
\end{tikzpicture}
\end{center}
\caption{A double sequence of level $\ge 3$ and shape $(2,2,3,1,1,1)$ and the corresponding boxed Catalan path}
\label{fig:boxedCatalanpath}
\end{figure}
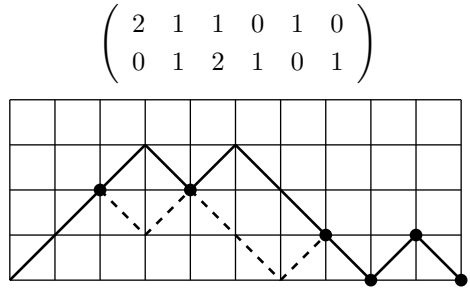

Let $S(\ell, \mathbf{k})$ be the set of double sequences of level $\ell$ and shape $\mathbf{k}$. We have learned the following result of B. Alexeev from D. Swinarski. 

\begin{proposition}[B. Alexeev]\label{prop:CBandboxCatpath}
\[
	\rank \mathbb{V}_{\ell}(k_{1}, \cdots, k_{n}) = 
	|S(\ell, \mathbf{k})|.
\]
\end{proposition}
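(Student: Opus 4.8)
The plan is to pass from a double sequence to the lattice path it traces, read off the five defining inequalities of Definition \ref{def:dblseq} as the admissibility conditions for a chain of three-point conformal blocks, and then recover the rank by iterating the factorization rule of Proposition \ref{prop:factorization}. The whole argument thus splits into a purely combinatorial reindexing and a routine expansion on the conformal-block side.

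First I would introduce the height function $d_{0} = 0$ and $d_{i} = \sum_{j=1}^{i}(x_{j}-y_{j})$ for $1 \le i \le n$, so that $d_{i}$ is the endpoint of the boxed Catalan path after its $i$-th column. Conditions (3) and (4) read $x_{i}+d_{i-1}\le \ell$ and $d_{i-1}\ge y_{i}$, and condition (5) reads $d_{n}=0$; together with $x_{i}+y_{i}=k_{i}$ these force every $d_{i}$ to lie in $[0,\ell]$ automatically, and likewise force the upper bounds $x_{i},y_{i}\le\ell$ in condition (1). The crucial observation is that a double sequence is completely recovered from its height sequence $(d_{0},\ldots,d_{n})$ via $x_{i}=(k_{i}+d_{i}-d_{i-1})/2$ and $y_{i}=(k_{i}-d_{i}+d_{i-1})/2$. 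I would then check, for each $i$, that conditions (1)--(5) are equivalent to the triple $(d_{i-1},k_{i},d_{i})$ satisfying the hypotheses of Proposition \ref{prop:factorization}(3): the parity $d_{i-1}+k_{i}+d_{i}\in 2\ZZ$ amounts to the integrality of $x_{i},y_{i}$; the bound $d_{i-1}+k_{i}+d_{i}\le 2\ell$ is precisely condition (3); the triangle inequality $k_{i}\le d_{i-1}+d_{i}$ is condition (4); and the remaining inequalities $|d_{i-1}-d_{i}|\le k_{i}$ are the nonnegativity of $x_{i},y_{i}$ in condition (1). Hence $|S(\ell,\mathbf{k})|$ equals the number of sequences $0=d_{0},d_{1},\ldots,d_{n-1},d_{n}=0$ in $[0,\ell]$ with $\rank \mathbb{V}_{\ell}(d_{i-1},k_{i},d_{i})=1$ for all $i$.

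Next I would expand the rank of the conformal block along a caterpillar degeneration. Applying Proposition \ref{prop:factorization}(4) repeatedly, peeling off the first two legs at each stage, yields
\[
	\rank \mathbb{V}_{\ell}(k_{1},\ldots,k_{n}) =
	\sum_{d_{1},\ldots,d_{n-1}=0}^{\ell}\;
	\prod_{i=1}^{n}\rank \mathbb{V}_{\ell}(d_{i-1},k_{i},d_{i}),
	\qquad d_{0}=d_{n}=0,
\]
where the two boundary factors $\rank \mathbb{V}_{\ell}(0,k_{1},d_{1})$ and $\rank \mathbb{V}_{\ell}(d_{n-1},k_{n},0)$ collapse the outermost sums to $d_{1}=k_{1}$ and $d_{n-1}=k_{n}$, using part (2) and the propagation of vacua of Remark \ref{rem:basicpropertiesCB}. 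Since each three-point rank is $0$ or $1$ by part (3), every nonzero summand contributes exactly $1$ and corresponds to an admissible height sequence as above. Comparing with the count of the previous paragraph gives $\rank \mathbb{V}_{\ell}(k_{1},\ldots,k_{n})=|S(\ell,\mathbf{k})|$, after disposing of the small cases $n\le 3$ by matching Definition \ref{def:dblseq} directly against parts (1)--(3).

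The main obstacle I anticipate is the bookkeeping in the bijection of the first step: one must verify in both directions that the five conditions of Definition \ref{def:dblseq} translate exactly (with nothing left over and nothing missing) into the three admissibility constraints, and in particular that the heights $d_{i}$ remain confined to $[0,\ell]$ without an extra hypothesis. The conformal-block side is then essentially mechanical, since the caterpillar expansion is a direct iteration of the factorization rule and the three-point ranks are given explicitly by Proposition \ref{prop:factorization}.
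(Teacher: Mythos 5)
Your proof is correct, and it is a genuine (though closely related) variant of the paper's argument. The paper proceeds by induction on $n$: it checks the cases $n \le 3$ directly, and then shows via a cut-and-merge bijection at a single arbitrary splitting index $t$ that $|S(\ell, \mathbf{k})|$ obeys the same recursion as $\rank \mathbb{V}_{\ell}(k_{1}, \cdots, k_{n})$ in Proposition \ref{prop:factorization}(4); since the two quantities satisfy the same recursion with the same initial data, they agree. You instead unroll that recursion completely: your height dictionary $d_{i}=\sum_{j\le i}(x_{j}-y_{j})$, with $x_{i}=(k_{i}+d_{i}-d_{i-1})/2$ and $y_{i}=(k_{i}-d_{i}+d_{i-1})/2$, translates the five conditions of Definition \ref{def:dblseq} exactly into the three-point admissibility constraints of Proposition \ref{prop:factorization}(3), and the iterated caterpillar expansion (seeded by propagation of vacua, Remark \ref{rem:basicpropertiesCB}) writes the rank as a sum over height sequences of products of three-point ranks, each equal to $0$ or $1$. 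Your translation checks out line by line: $x_{i}+d_{i-1}=(d_{i-1}+k_{i}+d_{i})/2$ identifies condition (3) with the level bound, condition (4) with the triangle inequality $k_{i}\le d_{i-1}+d_{i}$, nonnegativity of $x_{i},y_{i}$ with the remaining triangle inequalities, and the confinement $0\le d_{i}\le\ell$ and $x_{i},y_{i}\le\ell$ does follow automatically from (3), (4) and $d_{0}=0$, as you assert. The trade-off: the paper's induction is shorter, needing the splitting bijection only once, while your version yields an explicit closed-form statement --- double sequences are precisely the admissible weight labelings in the fully factorized sum-product formula --- which makes the correspondence between boxed Catalan paths and conformal-block combinatorics transparent rather than implicit in an induction.
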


\begin{proof}
It is straightforward to check the proposition for $n = 1, 2, 3$. Also the number of double sequences (or equivalently, boxed Catalan paths) satisfies the factorization rule. Indeed, consider a double sequence 
\[
	DS = \left(\begin{array}{cccc}
	x_{1} & x_{2} & \cdots & x_{n}\\
	y_{1} & y_{2} & \cdots & y_{n}\end{array}\right)
\]
and the corresponding boxed Catalan path. For any $1 \le t \le n$, after $t$-th move, the $y$-coordinate is one of $0, 1, \cdots, \ell$. If the coordinate is $h$, we can construct two double sequences 
\[
	DS' = \left(\begin{array}{ccccc}
	x_{1} & x_{2} & \cdots & x_{t} & 0\\
	y_{1} & y_{2} & \cdots & y_{t} & h\end{array}\right), \quad
	DS'' = \left(\begin{array}{cccc}
	h & x_{t+1} & \cdots & x_{n}\\
	0 & y_{t+1} & \cdots & y_{n}\end{array}\right).
\]
It is straightforward to check that $DS' \in S(\ell, (k_{1}, \cdots, k_{t}, h))$ and $DS'' \in S(\ell, (h, k_{t+1}, \cdots, k_{n}))$. Conversely, for two double sequences $DS' \in S(\ell, (k_{1}, \cdots, k_{t}, h))$ and $DS'' \in S(\ell, (h, k_{t+1}, \cdots, k_{n}))$, by removing the last column (resp. the first column) of $DS'$ (resp. $DS''$) and merging them, we obtain a double sequence in $S(\ell, \mathbf{k})$. Thus we have 
\[
	|S(\ell, \mathbf{k})| = \sum_{h=1}^{\ell}|S(\ell, (k_{1}, \cdots, k_{t}, h))|
	\cdot |S(\ell, (h, k_{t+1}, \cdots, k_{n}))|.
\]
\end{proof}

\section{The effective cone of the moduli space of parabolic vector bundles}\label{sec:effectiveconegeneral}

In this section, we compute the effective cone of $\cM(\vec{a})$ with an arbitrary general parabolic weight $\vec{a}$. In this section, we assume that the number $n$ of parabolic points is at least 5. The following is a direct consequence of Proposition \ref{prop:generalmoduli}. 

\begin{lemma}\label{lem:generaleffectivecone}
Let $\vec{a} = (a_{1}, \cdots, a_{n})$ be a general parabolic weight such that $\cM(\vec{a})$ has the maximal Picard number $n+1$. Then $\mathrm{rank}\;\Pic(\cM(\vec{a}))_{\QQ} = n+1$ and $\mathrm{Eff}(\cM(\vec{a}))$ is identified with $\mathrm{Eff}(\mathrm{Bl}_{[\vec{p}]}(\PP^{1})^{n}\git_{L}\SL_{2})$ for $L$ with a maximal stable locus.
\end{lemma}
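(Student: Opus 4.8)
The plan is to extract the statement directly from the wall-crossing description of Proposition \ref{prop:generalmoduli}, together with the fact that a flip is an isomorphism in codimension one. Set $L = \cO(a_1, \ldots, a_n)$ and consider the ray $c \mapsto c\vec{a}$ for $c \in (0,1]$. For $c$ small enough $\sum_i c a_i < 2$, so by Proposition \ref{prop:parabolicGIT} we have $\cM(c\vec{a}) \cong (\PP^1)^n\git_L\SL_2$; letting $c$ grow back to $1$ realises $\cM(\vec{a})$ through the wall-crossings analysed in Proposition \ref{prop:generalmoduli}.

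First I would track the Picard number along this family. By Proposition \ref{prop:generalmoduli} the first wall-crossing is the blow-up of $(\PP^1)^n\git_L\SL_2$ at the point $[\vec{p}]$, which raises the Picard number by one, while every later wall-crossing is a flip or a blow-down. Hence after the initial blow-up the Picard number is non-increasing, and it drops strictly at each blow-down. Since by hypothesis $\cM(\vec{a})$ realises the maximal value $n+1$, the quotient $(\PP^1)^n\git_L\SL_2$ must already have Picard number $n$; by Proposition \ref{prop:picardgroup} this forces $L$ to have a maximal stable locus (i.e. no unstable divisors), and it forces every wall-crossing beyond the blow-up to be a flip rather than a blow-down. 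In particular $\mathrm{rank}\,\Pic(\cM(\vec{a}))_\QQ = n+1$, equal to that of $\mathrm{Bl}_{[\vec{p}]}(\PP^1)^n\git_L\SL_2$.

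Next I would use that each such flip is an isomorphism away from a closed locus of codimension at least two, all the relevant spaces being smooth by Corollary \ref{smoothness of moduli} and Remark \ref{rmk:smoothnessofgeneralquotient}. Taking strict transforms therefore gives a bijection on prime divisors, hence a canonical isomorphism of rational Picard groups under which effective divisors correspond to effective divisors; thus the effective cones are identified. Composing the finite chain of flips joining $\mathrm{Bl}_{[\vec{p}]}(\PP^1)^n\git_L\SL_2$ to $\cM(\vec{a})$ yields the desired identification of $\mathrm{Eff}(\cM(\vec{a}))$ with $\mathrm{Eff}(\mathrm{Bl}_{[\vec{p}]}(\PP^1)^n\git_L\SL_2)$.

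The step I expect to be the crux is the Picard-number bookkeeping that excludes blow-downs: one must confirm that, after the first blow-up, the Picard number can only stay equal (at a flip) or strictly decrease (at a blow-down), so that attaining the maximal value $n+1$ leaves flips as the only possibility, and simultaneously pins down $L$ as having a maximal stable locus. Granting this, the invariance of the effective cone under a flip is immediate, since nothing changes in codimension one.
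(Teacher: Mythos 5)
Your proposal is correct and takes essentially the same route as the paper: the paper presents this lemma as a direct consequence of Proposition \ref{prop:generalmoduli}, and your argument simply makes explicit the Picard-number bookkeeping (the initial blow-up adds one, flips preserve, blow-downs strictly drop, so maximality forces a maximal stable locus for $L$ and rules out blow-downs) together with the standard fact that flips, being isomorphisms in codimension one between smooth varieties, identify rational Picard groups and effective cones.
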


Thus for $\cM(\vec{a})$ with Picard number $n+1$, to compute $\mathrm{Eff}(\cM(\vec{a}))$, it suffices to compute $\mathrm{Eff}(\cM^{+})$, where $\cM^{+} := \mathrm{Bl}_{[\vec{p}]}(\PP^{1})^{n}\git_{L}\SL_{2}$ with $L = \cO(a_{1}, \cdots, a_{n})$. Let $E$ be the exceptional divisor of the blow-up $\cM^{+} \to (\PP^{1})^{n}\git_{L}\SL_{2}$. Since $\Pic(\cM^{+})_{\QQ}$ is generated by $\cO(D_{\{i,j\}}) = \overline{\cO}(e_{i}+e_{j})$ and $E$, we can uniquely write a $\QQ$-line bundle on $\cM^{+}$ (so on $\cM(\vec{a})$) as
\[
	\overline{\cO}(b_{1}, \cdots, b_{n}) - tE
\]
for some $b_{i}$ and $t$. 

The main result of this section is the following theorem. 

\begin{theorem}\label{thm:effectiveconegeneral}
Let $\vec{a}$ be a general parabolic weight such that $\cM(\vec{a})$ has the maximal Picard number $n+1$. Then the effective cone $\mathrm{Eff}(\cM(\vec{a}))$ is polyhedral and generated by $\overline{\cO}(\sum_{j \in I}e_{j}) - (i-1)E$ for every $I \subset [n]$ with $|I| = 2i$ for $0 \le i \le \lfloor n/2\rfloor$. All $\overline{\cO}(\sum_{j \in I}e_{j}) - (i-1)E$ are extremal, thus there are precisely $2^{n-1}$ extremal rays. 
\end{theorem}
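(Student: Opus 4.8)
By Lemma \ref{lem:generaleffectivecone}, the plan is to reduce everything to computing $\mathrm{Eff}(\cM^{+})$ where $\cM^{+} = \mathrm{Bl}_{[\vec{p}]}(\PP^{1})^{n}\git_{L}\SL_{2}$, so I work entirely on this single blow-up. The proof naturally splits into two halves: first, establishing that each candidate divisor $\overline{\cO}(\sum_{j \in I}e_{j}) - (i-1)E$ with $|I|=2i$ is \emph{effective} (so the claimed cone is contained in $\mathrm{Eff}(\cM^{+})$); and second, establishing that these candidates \emph{generate} $\mathrm{Eff}(\cM^{+})$ (so the reverse containment holds) and that each is \emph{extremal}. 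The count $2^{n-1}$ should then be a combinatorial bookkeeping step: I would check $\sum_{i=0}^{\lfloor n/2\rfloor}\binom{n}{2i} = 2^{n-1}$, which is the standard identity for the sum of even binomial coefficients.

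**Effectivity via conformal blocks.** To show each $\overline{\cO}(\sum_{j \in I}e_{j}) - (i-1)E$ is effective, the plan is to exhibit a nonzero section, and the natural source is the combinatorics of $\mathfrak{sl}_{2}$-conformal blocks developed in Section \ref{sec:conformalblock}. Following the outline in the introduction, I would interpret a level-one conformal block as producing a divisor on $\cM(\vec{a})$ via Pauly's identification, and the key quantitative input is to control the multiplicity of the corresponding section along the exceptional divisor $E$. Concretely, a section coming from $\mathbb{V}_{1}(\ldots)$ pulls back to an $\SL_2$-invariant polynomial on $(\PP^1)^n$, and by Lemma \ref{lem:confblock} its vanishing order at $\vec{p}$ is controlled by $N - \ell$; since the blow-up center is exactly $[\vec{p}]$ (Proposition \ref{prop:generalmoduli}), this vanishing order translates directly into the coefficient $(i-1)$ of $E$. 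I would verify that the level-one conformal block attached to the subset $I$ with $|I|=2i$ has underlying line bundle class $\overline{\cO}(\sum_{j\in I}e_j)$ and the right multiplicity along $E$, giving effectivity.

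**Generation and extremality — the hard part.** For the reverse inclusion, the plan is to show that \emph{any} effective divisor on $\cM^{+}$ is a nonnegative combination of these $2^{n-1}$ classes. I expect this to be the main obstacle: while Proposition \ref{prop:effectiveconeGIT} gives the effective cone downstairs on $(\PP^{1})^{n}\git_{L}\SL_{2}$ as generated by the $D_{\{i,j\}}$, the blow-up introduces the extra coordinate $-tE$, and I must rule out effective classes with $t$ larger than the conformal-block bound permits. The natural mechanism is to bound the multiplicity at $[\vec{p}]$ of any section of $\overline{\cO}(b_1,\ldots,b_n)$ by reducing, via the factorization and fusion rules (Proposition \ref{prop:factorization}) and the boxed Catalan path model (Proposition \ref{prop:CBandboxCatpath}), to a count of conformal blocks of the appropriate level; the height $h(DS)$ of a boxed path should encode the admissible vanishing order, so that every effective divisor decomposes as a nonnegative sum of the level-one generators. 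Extremality of each generator, and the fact that there are no others, would then follow from a dual-curve argument: for each subset $I$ I would construct a one-parameter family of semistable parabolic bundles (a curve in $\cM^{+}$) pairing to zero with all the claimed generators except the one indexed by $I$, in the spirit of Remark \ref{rem:dualcurve}. The delicate point throughout is matching the combinatorial height/level data of the conformal blocks to the geometric $E$-multiplicity on the blow-up, and confirming that the $2^{n-1}$ subsets of even size give genuinely distinct extremal rays rather than redundant ones.
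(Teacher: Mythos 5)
Your setup and effectivity steps match the paper (Lemma \ref{lem:generaleffectivecone} for the reduction to $\cM^{+}$, then Lemma \ref{lem:effectivedivisorasCB} identifying $|\overline{\cO}(b_{1},\cdots,b_{n})-tE|$ with $\mathbb{V}_{N-t}(b_{1},\cdots,b_{n})$, so that each class $\overline{\cO}(\sum_{j\in I}e_{j})-(i-1)E$ is a rank-one level-one block). But the generation step, which you correctly flag as the hard part, is left as a hope rather than an argument, and the key idea is missing. What actually makes the induction run in the paper is a combinatorial \emph{peeling} move (Lemma \ref{lem:modifyingdoubleseq}): given any double sequence of level $\ell$ and height $>1$ with shape $\mathbf{k}$, one produces a nonempty \emph{even} subset $T\subset[n]$ and a double sequence of level $\ell-1$ whose shape is $\mathbf{k}$ decreased by $1$ exactly on $T$. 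Feeding this into the multiplication map $\mathbb{V}_{1}\otimes\mathbb{V}_{N-t-1}(\mathbf{k}')\hookrightarrow\mathbb{V}_{N-t}(\mathbf{k})$ of Section \ref{ssec:quickdef} splits the class of any effective divisor $\overline{\cO}(\mathbf{k})-tE$ (with $t\ge 0$) as a level-one generator plus an effective class with smaller $|\mathbf{k}|$ and no larger $t$, and induction on $|\mathbf{k}|$ finishes; the case $t\le 0$ is handled separately using Proposition \ref{prop:effectiveconeGIT} and $E$. Note also that your slogan is not quite right: it is the \emph{level} $N-t$, not the height $h(DS)$, that encodes the vanishing order at $[\vec{p}]$; the height is what certifies that the peeling move is available. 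Without some version of Lemma \ref{lem:modifyingdoubleseq}, nothing in your outline forces an arbitrary effective class to decompose into level-one pieces.

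The extremality step is a genuine error, not just a gap. You propose, for each even $I$, a curve $C_{I}$ in $\cM^{+}$ pairing to zero with \emph{all} generators except $D_{I}:=\overline{\cO}(\sum_{j\in I}e_{j})-(i-1)E$. No such curve class exists: for $n\ge 5$ the remaining generators already span all of $\Pic(\cM(\vec{a}))_{\QQ}\cong\QQ^{n+1}$. Indeed, the classes $\overline{\cO}(e_{i}+e_{j})$ span the first $n$ coordinates (even if one pair is omitted, since $e_{1}+e_{2}=(e_{1}+e_{3})+(e_{2}+e_{4})-(e_{3}+e_{4})$), and $E$ (or any $|J|=4$ generator) supplies the last; hence the only functional vanishing on all generators but one is zero. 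The quantifiers must go the other way: one needs functionals that are nonnegative on the \emph{whole} cone and vanish on $D_{I}$, with one-dimensional common kernel. This is exactly what the paper constructs in Proposition \ref{prop:extremal}: $n$ explicit, linearly independent linear functionals $\ell_{1},\cdots,\ell_{n}$ (built from the index sets of Lemma \ref{lem:indepset}) vanishing on $D_{I}$ and nonnegative on every generator. (If you instead intended a single curve $C$ with $C\cdot D_{I}=0$ and $C\cdot D_{J}>0$ for $J\ne I$, that would suffice once generation is known, but the existence of such a curve is essentially equivalent to the extremality being proved, so you would still owe an explicit construction, and Remark \ref{rem:dualcurve} only provides such curves downstairs on $(\PP^{1})^{n}\git_{L}\SL_{2}$, not on the blow-up.)
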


We give the proof of Theorem \ref{thm:effectiveconegeneral} after discussing several lemmas. 

The following observations are simple but important to us.

\begin{lemma}\label{lem:effectivedivisorasCB}
For any $t \ge 0$, the linear system $|\overline{\cO}(b_{1}, \cdots, b_{n}) - tE|$ is naturally identified with $\mathbb{V}_{N-t}(b_{1}, \cdots, b_{n})$ where $N = (\sum b_{i})/2$. 
\end{lemma}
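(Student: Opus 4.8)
The plan is to identify both sides with the same explicit subspace of $V_{\mathbf{b}}^{\SL_2} = H^{0}((\PP^{1})^{n}, \cO(b_{1}, \cdots, b_{n}))^{\SL_2}$ — the invariants whose dehomogenization vanishes to high order at $\vec{p}$ — and then invoke Looijenga's criterion (Lemma \ref{lem:confblock}). First I would unwind the left-hand side. Writing $\pi : \cM^{+} \to M := (\PP^{1})^{n}\git_{L}\SL_{2}$ for the blow-up and recalling that $M$ is smooth at the point $[\vec{p}]$ by Remark \ref{rmk:smoothnessofgeneralquotient}, the standard description of sections on a point blow-up of a smooth variety gives $\pi_{*}\cO_{\cM^{+}}(\pi^{*}\overline{\cO}(\mathbf{b}) - tE) = \mathfrak{m}_{[\vec{p}]}^{t}\otimes \overline{\cO}(\mathbf{b})$. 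This identifies $H^{0}(\cM^{+}, \overline{\cO}(\mathbf{b}) - tE)$ with the subspace of $H^{0}(M, \overline{\cO}(\mathbf{b}))$ consisting of sections vanishing to order at least $t$ at $[\vec{p}]$.

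Next I would identify $H^{0}(M, \overline{\cO}(\mathbf{b}))$ with $V_{\mathbf{b}}^{\SL_2}$. Since $L$ has a maximal stable locus, $\overline{\cO}(\mathbf{b})$ is the descent of the linearized bundle $\cO(b_{1}, \cdots, b_{n})$, so by the $\proj$ construction and the descent used in Section \ref{sec:effectiveconeGIT} its sections are exactly the invariant sections $H^{0}((\PP^{1})^{n}, \cO(\mathbf{b}))^{\SL_2} = V_{\mathbf{b}}^{\SL_2}$. Under the dehomogenization isomorphism $\phi$, an invariant section $\beta$ restricts on the affine chart $\CC^{n}\subset (\PP^{1})^{n}$ containing $\vec{p}$ to the polynomial $\phi(\beta)$.

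The crux is to match the two notions of vanishing. Because $L$ is general with maximal stable locus and the $p_{i}$ are distinct, $\vec{p}$ lies in the stable locus and the quotient map $q : ((\PP^{1})^{n})^{s}\to M$ is a principal $\mathrm{PGL}_{2}$-bundle by Remark \ref{rmk:smoothnessofgeneralquotient}, in particular a smooth morphism with $q^{*}\bar\beta = \beta$ on the stable locus. Since a smooth pullback preserves the order of vanishing of a section at a point of the fiber (the completed local rings differ only by adjoining the fiber coordinates), the order of $\bar\beta$ at $[\vec{p}]$ equals the order of $\beta$ at $\vec{p}\in(\PP^{1})^{n}$, which in turn equals the order of vanishing of $\phi(\beta)$ at $\vec{p}$. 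Thus the left-hand side is precisely the set of $\beta \in V_{\mathbf{b}}^{\SL_2}$ for which $\phi(\beta)$ vanishes to order $\ge t$ at $\vec{p}$.

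Finally I would apply Lemma \ref{lem:confblock} with $\ell = N - t$: since $N - \ell = t$, this subspace is exactly $\mathbb{V}_{N-t}(b_{1}, \cdots, b_{n})$, completing the identification. (When $\sum b_{i}$ is odd, $V_{\mathbf{b}}^{\SL_2} = 0$ and both sides vanish, so we may assume $N$ is integral.) I expect the main obstacle to be the third step — verifying cleanly that the order of vanishing of the descended section at $[\vec{p}]$ agrees with that of $\phi(\beta)$ at $\vec{p}$ — since this is where the geometry of the quotient map (freeness of the $\mathrm{PGL}_{2}$-action, smoothness, and the bookkeeping of the linearization ensuring $q^{*}\bar\beta = \beta$) must be used.
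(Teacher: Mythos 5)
Your proposal is correct and follows essentially the same route as the paper: the paper's proof likewise identifies $|\overline{\cO}(b_{1},\cdots,b_{n})-tE|$ with the sub-linear-system of $|\overline{\cO}(b_{1},\cdots,b_{n})|$ of sections vanishing to order $\ge t$ at $[\vec{p}]$ and then invokes Lemma \ref{lem:confblock}. The only difference is that the paper compresses this into two sentences, while you have (correctly) supplied the supporting details — the descent identification $H^{0}(M,\overline{\cO}(\mathbf{b}))\cong V_{\mathbf{b}}^{\SL_2}$ and the preservation of vanishing order under the smooth quotient map — which the paper leaves implicit.
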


\begin{proof}
Since $\cM^{+} \to (\PP^{1})^{n}\git_{L}\SL_{2}$ is a blow-up at a smooth point, $|\overline{\cO}(b_{1}, \cdots, b_{n}) - tE|$ is the sub linear system of $|\overline{\cO}(b_{1}, \cdots, b_{n})|$ consisting of the sections vanishing at $[\vec{p}]$ with multiplicity $\ge t$. By Lemma \ref{lem:confblock}, it is identified with $\mathbb{V}_{N-t}(b_{1}, \cdots, b_{n})$.
\end{proof}

\begin{lemma}
For any $I \subset [n]$ with $|I| = 2i$ and $0 \le i \le \lfloor n/2\rfloor$, the linear system $|\overline{\cO}(\sum_{j \in I}e_{j}) - (i-1)E|$ is nonempty. It is a conformal block of level one. 
\end{lemma}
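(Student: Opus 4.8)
The plan is to read the divisor off in the parametrization of Lemma~\ref{lem:effectivedivisorasCB} and then reduce both assertions to a rank computation for a level-one conformal block. First I would record the data: for $D = \overline{\cO}(\sum_{j\in I}e_{j}) - (i-1)E$ with $|I| = 2i$, the coefficients are $b_{j} = 1$ for $j \in I$ and $b_{j} = 0$ otherwise, while the blow-up multiplicity is $t = i-1$. Hence $N := (\sum_{j}b_{j})/2 = |I|/2 = i$, so that $N - t = i - (i-1) = 1$. Assuming $i \ge 1$ (so that $t \ge 0$), Lemma~\ref{lem:effectivedivisorasCB} identifies the linear system $|D|$ with $\mathbb{V}_{N-t}(b_{1}, \cdots, b_{n}) = \mathbb{V}_{1}(b_{1}, \cdots, b_{n})$, which is by definition a space of conformal blocks of level one; this settles the second claim. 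The case $i = 0$ must be flagged separately, since there $t = -1$ and Lemma~\ref{lem:effectivedivisorasCB} requires $t \ge 0$: in that case $D = E$ is the exceptional divisor, which is manifestly effective (and may be viewed as $\mathbb{V}_{1}$ of empty shape via propagation of vacua).

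For nonemptiness I would show $\rank \mathbb{V}_{1}(b_{1}, \cdots, b_{n}) > 0$. Applying propagation of vacua (Remark~\ref{rem:basicpropertiesCB}(4)) to delete the $n - 2i$ zero entries reduces this to $\mathbb{V}_{1}(1, \cdots, 1)$ with exactly $2i$ ones. By Proposition~\ref{prop:CBandboxCatpath} its rank equals the number of level-one double sequences of shape $(1, \cdots, 1)$. In such a sequence each column $(x_{j}, y_{j})$ satisfies $x_{j} + y_{j} = 1$, so it is either $(1,0)$ or $(0,1)$, and the associated boxed Catalan path is confined to the strip $0 \le y \le 1$ by conditions (3) and (4) of Definition~\ref{def:dblseq}. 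From height $0$ only an up-step is admissible and from height $1$ only a down-step, so the path is forced to alternate up, down, up, down, ending at height $0$ after the $2i$ steps. The double sequence is therefore uniquely determined, the rank is $1$, and $|D|$ is nonempty.

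I do not expect a serious obstacle here: the statement is essentially a dictionary entry that translates the divisor data through Lemma~\ref{lem:effectivedivisorasCB}. The only points requiring genuine care are the arithmetic identity $N - t = 1$ (on which everything hinges), the short confinement argument establishing that the alternating double sequence is the unique one, and the bookkeeping for the degenerate $i = 0$ case, which lies outside the range of validity of Lemma~\ref{lem:effectivedivisorasCB} and must be treated by hand.
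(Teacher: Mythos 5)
Your proof is correct, and its skeleton matches the paper's: both identify $|\overline{\cO}(\sum_{j\in I}e_{j})-(i-1)E|$ with $\mathbb{V}_{1}(b_{1},\cdots,b_{n})$ (the arithmetic $N-t=i-(i-1)=1$ being exactly what Lemma \ref{lem:effectivedivisorasCB} requires), and both set aside the degenerate case $i=0$, where the system is $|E|$ and is nonempty because $E$ is the exceptional divisor. Where you diverge is in proving nonemptiness of the level-one block: the paper disposes of this in one line by citing the fusion/factorization rules (Proposition \ref{prop:factorization}), which give $\rank \mathbb{V}_{1}(b_{1},\cdots,b_{n})=1$, whereas you reduce by propagation of vacua to $\mathbb{V}_{1}(1,\cdots,1)$ with $2i$ ones (note you should invoke item (2) of Remark \ref{rem:basicpropertiesCB} alongside item (4), since the zero entries need not sit at the end) and then count double sequences via Proposition \ref{prop:CBandboxCatpath}: at level one, conditions (3) and (4) of Definition \ref{def:dblseq} force the boxed Catalan path to alternate up/down, so there is exactly one such path. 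The two routes are equivalent in substance --- Proposition \ref{prop:CBandboxCatpath} is itself proved in the paper by verifying the factorization rule --- but yours is more self-contained and makes the rank-one statement visibly combinatorial, at the cost of a few extra lines; the paper's citation is shorter. Both arguments in fact establish rank exactly one, slightly more than the nonemptiness the lemma asserts.
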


\begin{proof}
For $i \ge 1$, note that $|\overline{\cO}(\sum_{j \in I}e_{j}) - (i-1)E| = \mathbb{V}_{1}(b_{1}, \cdots, b_{n})$ where $b_{j} = 1$ if $j \in I$ and $b_{j} = 0$ otherwise. By  Proposition \ref{prop:factorization}, we have $\rank \mathbb{V}_{1}(b_{1}, \cdots, b_{n}) = 1$. When $i = 0$, we have $|\overline{\cO}+E| = |E| \ne \emptyset$, which may be formally identified with $\mathbb{V}_{1}(0,0, \cdots, 0)$.
\end{proof}

The next lemma is a key combinatorial result for the computation of the effective cone. 

\begin{lemma}\label{lem:modifyingdoubleseq}
Let $DS$ be a double sequence of level $\ell$, height $h(DS) > 1$, and of shape $\mathbf{k} = (k_{1}, \cdots, k_{n})$ with $k_{1} \ge \cdots \ge k_{n} > 0$. There is a nonempty even subset $T \subset [n]$ such that there is a double sequence $DS'$ with level $\ell-1$, height $h(DS') = h(DS)-1$, and shape $\mathbf{k}' = (k_{1}', \cdots, k_{n}')$ where 
\[
	k_{j}' = \begin{cases}k_{j}, & j \notin T,\\
	k_{j}-1, & j \in T. \end{cases}
\]
\end{lemma}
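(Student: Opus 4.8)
The plan is to argue entirely in the boxed Catalan path picture introduced after Definition \ref{def:dblseq}. A double sequence $DS$ of level $\ell$ and shape $\mathbf{k}$ is the lattice path $P$ from $(0,0)$ to $(|\mathbf{k}|,0)$ whose $j$-th block consists of $x_j$ up-steps $(1,1)$ followed by $y_j$ down-steps $(1,-1)$; conditions (3) and (4) say exactly that $P$ together with its ``lower'' route (down-steps first in each block) stays in the strip $0\le y\le\ell$, and $h=h(DS)$ is the top height attained. Writing $p_i=\sum_{j\le i}(x_j-y_j)$ for the height after block $i$ and $c_i=p_{i-1}-y_i$ for the lower corner of block $i$, admissibility reads $c_i\ge 0$ (condition (4)) and $p_{i-1}+x_i\le\ell$ (condition (3)). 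In these terms I must build a path $P'$ inside $0\le y\le\ell-1$ of top height exactly $h-1$, obtained from $P$ by deleting one step from each block of a nonempty set $T$ of even size, with equally many up-steps and down-steps deleted; the last requirement is forced by $p'_n=0$, i.e. condition (5).

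First I would record the rigidity coming from condition (4), collected in Remark \ref{rem:implications}: every $k_j>0$ makes all blocks nonempty, the path can leave height $0$ only by a pure ascent (if $p_{i-1}=0$ then $y_i=0$) and can reach height $0$ only by a pure descent (if $p_i=0$ then $x_i=0$). Consequently each visit of $P$ to its maximal level $h$ is an isolated peak, namely an up-step from $h-1$ to $h$ immediately followed by a down-step from $h$ to $h-1$, whose two steps lie either in a single block $j$ (necessarily entered above $0$, so $p_{j-1}\ge 1$) or in two consecutive blocks $j,j+1$.

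The lowering is then performed interval by interval. I look for a function $\sigma\colon\{0,\dots,n\}\to\{0,-1\}$ with $\sigma_0=\sigma_n=0$ whose support is a disjoint union of block-intervals, and set $p'_i=p_i+\sigma_i$; such a $\sigma$ is realized by deleting one up-step from the block where $\sigma$ drops from $0$ to $-1$ and one down-step from the block where it climbs back, and $T$ is the set of these endpoints. Each interval then contributes exactly one up-deletion and one down-deletion, so $|T|$ is even and the deletion counts match, and the steps to be deleted exist because an interval can always be started at a block with $x\ge 1$ and ended at one with $y\ge 1$. A block of top $h$ lying in an interval has its top lowered to $h-1$, whereas an uncovered block keeps its top; arranging the intervals to cover every top-$h$ block therefore makes the new top height exactly $h-1\le\ell-1$ (it is attained since $h>1$ guarantees a peak, hence a nonempty $T$), giving both the height and the level assertions.

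The heart of the matter, and the step I expect to be the main obstacle, is to choose the intervals so that no block is asked to surrender two steps and so that admissibility survives, i.e. the lowered corners stay $\ge 0$. Lowering a boundary requires $p_i\ge 1$ there, while lowering a block in the strict interior of an interval turns $c_i$ into $c_i-1$ and hence demands $c_i\ge 1$; thus an interval may be lowered only between consecutive ``barriers'', the blocks with $c_i=0$ where the lower route touches $0$. The delicate cases are a peak confined to a single block and a top-$h$ block that is itself a barrier: deleting both peak-steps there would drop some $k_j$ by $2$, so instead I make such a block an endpoint of its interval (an up-start if it is lowered from the left, a down-end if from the right), lowering its top by $1$ with a single deletion while preserving its corner. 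The structural facts above, in particular that a single-block peak is entered at height $\ge 1$ and that every return to $0$ is a pure descent, are precisely what let one sweep the peaks from left to right and match each to a clean interval with distinct up- and down-deletion blocks; carrying out this bookkeeping and checking that every top-$h$ peak is covered exactly once is the technical core. The hypothesis $k_j>0$ (no empty blocks) together with $h>1$ (existence of a peak, hence $T\ne\emptyset$) are exactly what make the argument close, whereas the ordering $k_1\ge\cdots\ge k_n$ plays no role in the surgery and is assumed only to match the intended application.
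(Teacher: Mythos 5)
Your reduction of the lemma to an interval-selection problem is correct and cleanly isolates what must be proved: one needs pairwise disjoint block-intervals $[s,e]$ with $s<e$, an up-deletion at $s$ (so $x_s\ge 1$) and a down-deletion at $e$ (so $y_e\ge 1$), whose union contains every block of upper corner $h$, and whose strict interiors contain no ``barrier'' (block with lower corner $c_i=0$). But the proposal stops exactly there. The existence of such a system of intervals \emph{is} the lemma; everything before it is translation, and you explicitly defer the existence proof (``carrying out this bookkeeping and checking that every top-$h$ peak is covered exactly once is the technical core''). Saying that the structural facts ``let one sweep the peaks from left to right and match each to a clean interval'' is an expectation, not an argument: you never specify how the endpoints are chosen, nor verify that the choices for different peaks never collide (no block may be an endpoint twice, since each $k_j$ may drop by at most $1$), that a peak sitting on a barrier can be made an endpoint compatibly with its neighbours, or that an admissible right endpoint always exists before the next barrier. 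The gap sits at the only step that carries mathematical content.

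That this step is not routine can be seen on a concrete example. Take the double sequence with columns $(x_j,y_j)$ equal to $(4,0),(1,3),(2,2),(4,0),(0,4),(0,2)$, of shape $(4,4,4,4,4,2)$ and height $h=6$: the top-$h$ blocks are $4$ and $5$, and the barriers are blocks $1$, $3$, $6$. Any interval beginning at block $1$ and covering the peaks has the barrier $3$ in its interior, so the lowering produces a lower corner equal to $-1$ and destroys condition (4); the up-deletion is forced to occur at block $3$ or $4$. In other words, a left-to-right sweep that grabs the first available up-step --- which is essentially the route the paper's own construction takes, starting with $c_1=1$ and ending at the first lower-corner-zero block after the path visits $h$ --- runs into precisely the barrier problem you identified, so the case analysis you left out is where the actual difficulty (even for the published argument) resides. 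For the same reason, your closing claim that the ordering $k_1\ge\cdots\ge k_n$ ``plays no role in the surgery'' is unsupported: since the covering argument is never carried out, you cannot tell which hypotheses it consumes. For instance, a barrier that is also a peak automatically has $k_j=h$, and the ordering then forces $k_1=h$, i.e.\ block $1$ itself reaches the top --- exactly the kind of structural restriction a completed sweep would need to invoke.
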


\begin{proof}
We will construct a new double sequence 
\[
	DS' = \left(\begin{array}{cccc}x_{1}' & x_{2}' & \cdots & x_{n}'\\
	y_{1}' & y_{2}' & \cdots & y_{n}'\end{array}\right)
\]
and $T$ as the following. At the beginning, set $DS' = DS$ and $T = \emptyset$. The reader can understand the modification below by identifying $DS$ with the corresponding boxed Catalan path. 

First of all, consider the special case that the boxed Catalan path meets the $x$-axis at the starting point and the end point only. Set $c_{1} := 1$ and put $c_{1}$ in $T$. Let $c_{2} > c_{1}$ be the smallest index such that 1) $-y_{c_{2}} + \sum_{i=1}^{c_{2}-1} (x_{i}-y_{i}) = 0$ and 2) between $c_{1}$-th move and $c_{2}$-th move, the path reaches the height $h(DS)$ at least once. If $y_{c_{2}} = 0$, then $\sum_{i=1}^{c_{2}-1}(x_{i}-y_{i}) = 0$ and the path meets $x$-axis after $c_{2}-1$-th move, too. From the assumption, we always have $y_{c_{2}} > 0$. Note that $n$ satisfies all of two conditions, so the set of indices satisfying them is nonempty. Put $c_{2}$ in $T$. 

We will continue the construction of $T$ as the following. If the path reaches the height $h(DS)$ after $c_{2}$-th move, then let $c_{2j+1} > c_{2j}$ be the smallest index so that $x_{c_{2j+1}} > 0$. There must be at least one such index because if not, then after $c_{2j}$-th move, it never reach the height $h(DS)$. Let $c_{2j+2} > c_{2j+1}$ be the smallest index satisfying two conditions in the previous paragraph. Then $y_{c_{2j+2}} > 0$. Set $c_{2j+1}, c_{2j+2} \in T$. Continue this procedure until there is no remaining intersection with the path and $y = h(DS)$. After that, set $x_{c_{2j+1}}' := x_{c_{2j+1}}-1$ and $y_{c_{2j+2}}' := y_{c_{2j+2}}-1$ for all $j \ge 0$. 

By the construction, $T$ is even and nonempty, $h(DS') = h(DS) - 1$, the new level is at most $\ell - 1$, and $\mathbf{k}'$ is that in the statement of the lemma. Conditions 1, 2, and 5 in Definition \ref{def:dblseq} are clear from the construction. Also it is a routine computation to verify conditions 3 and 4. Thus $DS'$ is a double sequence. See Figure \ref{fig:modification} for an example of the modification. 

In general, if there are several points on which the path and the $x$-axis intersect, then we can modify each part over the $x$-axis separately by using the above method. It is clear that we obtain a new double sequence satisfying the assumption. 
\end{proof}

\begin{figure}[!ht]
\begin{center}
\begin{tikzpicture}[scale=0.4]
	\draw[line width=0.5pt] (0,0) -- (26, 0);
	\draw[line width=0.5pt] (0,1) -- (26, 1);
	\draw[line width=0.5pt] (0,2) -- (26, 2);
	\draw[line width=0.5pt] (0,3) -- (26, 3);
	\draw[line width=0.5pt] (0,4) -- (26, 4);
	\draw[line width=0.5pt] (0,5) -- (26, 5);
	\draw[line width=0.5pt] (0,0) -- (0, 5);
	\draw[line width=0.5pt] (1,0) -- (1, 5);
	\draw[line width=0.5pt] (2,0) -- (2, 5);
	\draw[line width=0.5pt] (3,0) -- (3, 5);
	\draw[line width=0.5pt] (4,0) -- (4, 5);
	\draw[line width=0.5pt] (5,0) -- (5, 5);
	\draw[line width=0.5pt] (6,0) -- (6, 5);
	\draw[line width=0.5pt] (7,0) -- (7, 5);
	\draw[line width=0.5pt] (8,0) -- (8, 5);
	\draw[line width=0.5pt] (9,0) -- (9, 5);
	\draw[line width=0.5pt] (10,0) -- (10, 5);
	\draw[line width=0.5pt] (11,0) -- (11, 5);
	\draw[line width=0.5pt] (12,0) -- (12, 5);
	\draw[line width=0.5pt] (13,0) -- (13, 5);
	\draw[line width=0.5pt] (14,0) -- (14, 5);
	\draw[line width=0.5pt] (15,0) -- (15, 5);
	\draw[line width=0.5pt] (16,0) -- (16, 5);
	\draw[line width=0.5pt] (17,0) -- (17, 5);
	\draw[line width=0.5pt] (18,0) -- (18, 5);
	\draw[line width=0.5pt] (19,0) -- (19, 5);
	\draw[line width=0.5pt] (20,0) -- (20, 5);
	\draw[line width=0.5pt] (21,0) -- (21, 5);
	\draw[line width=0.5pt] (22,0) -- (22, 5);
	\draw[line width=0.5pt] (23,0) -- (23, 5);
	\draw[line width=0.5pt] (24,0) -- (24, 5);
	\draw[line width=0.5pt] (25,0) -- (25, 5);
	\draw[line width=0.5pt] (26,0) -- (26, 5);
	\draw[line width=1pt] (0,0) -- (5,5);
	\draw[line width=1pt] (5,5) -- (9,1);
	\draw[line width=1pt] (9,1) -- (12,4);
	\draw[line width=1pt] (12,4) -- (15,1);
	\draw[line width=1pt] (15,1) -- (19,5);
	\draw[line width=1pt] (19,5) -- (23,1);
	\draw[line width=1pt] (23,1) -- (24,2);
	\draw[line width=1pt] (24,2) -- (26,0);
	\fill (5,5) circle (4pt);
	\fill (9,1) circle (4pt);
	\fill (13,3) circle (4pt);
	\fill (15,1) circle (4pt);
	\fill (17,3) circle (4pt);
	\fill (19,5) circle (4pt);
	\fill (21,3) circle (4pt);
	\fill (23,1) circle (4pt);
	\fill (25,1) circle (4pt);
	\fill (26,0) circle (4pt);

	\node at (2.5, 3.5) {$c_{1}$};
	\node at (10.5, 3.5) {$c_{2}$};
	\node at (15.5, 2.5) {$c_{3}$};
	\node at (24.5, 2.5) {$c_{4}$};

	\node at (13, -1) {$\Downarrow$};

	\draw[line width=0.5pt] (0,-7) -- (26, -7);
	\draw[line width=0.5pt] (0,-6) -- (26, -6);
	\draw[line width=0.5pt] (0,-5) -- (26, -5);
	\draw[line width=0.5pt] (0,-4) -- (26, -4);
	\draw[line width=0.5pt] (0,-3) -- (26, -3);
	\draw[line width=0.5pt] (0,-2) -- (26, -2);

	\draw[line width=0.5pt] (0,-7) -- (0, -2);
	\draw[line width=0.5pt] (1,-7) -- (1, -2);
	\draw[line width=0.5pt] (2,-7) -- (2, -2);
	\draw[line width=0.5pt] (3,-7) -- (3, -2);
	\draw[line width=0.5pt] (4,-7) -- (4, -2);
	\draw[line width=0.5pt] (5,-7) -- (5, -2);
	\draw[line width=0.5pt] (6,-7) -- (6, -2);
	\draw[line width=0.5pt] (7,-7) -- (7, -2);
	\draw[line width=0.5pt] (8,-7) -- (8, -2);
	\draw[line width=0.5pt] (9,-7) -- (9, -2);
	\draw[line width=0.5pt] (10,-7) -- (10, -2);
	\draw[line width=0.5pt] (11,-7) -- (11, -2);
	\draw[line width=0.5pt] (12,-7) -- (12, -2);
	\draw[line width=0.5pt] (13,-7) -- (13, -2);
	\draw[line width=0.5pt] (14,-7) -- (14, -2);
	\draw[line width=0.5pt] (15,-7) -- (15, -2);
	\draw[line width=0.5pt] (16,-7) -- (16, -2);
	\draw[line width=0.5pt] (17,-7) -- (17, -2);
	\draw[line width=0.5pt] (18,-7) -- (18, -2);
	\draw[line width=0.5pt] (19,-7) -- (19, -2);
	\draw[line width=0.5pt] (20,-7) -- (20, -2);
	\draw[line width=0.5pt] (21,-7) -- (21, -2);
	\draw[line width=0.5pt] (22,-7) -- (22, -2);
	\draw[line width=0.5pt] (23,-7) -- (23, -2);
	\draw[line width=0.5pt] (24,-7) -- (24, -2);
	\draw[line width=0.5pt] (25,-7) -- (25, -2);
	\draw[line width=0.5pt] (26,-7) -- (26, -2);	
	\draw[line width=1pt] (0,-7) -- (4,-3);
	\draw[line width=1pt] (4,-3) -- (8,-7);
	\draw[line width=1pt] (8,-7) -- (11,-4);
	\draw[line width=1pt] (11,-4) -- (13,-6);
	\draw[line width=1pt] (13,-6) -- (16,-3);
	\draw[line width=1pt] (16,-3) -- (20,-7);
	\draw[line width=1pt] (20,-7) -- (21,-6);
	\draw[line width=1pt] (21,-6) -- (22,-7);
	\fill (4,-3) circle (4pt);
	\fill (8,-7) circle (4pt);
	\fill (11,-4) circle (4pt);
	\fill (13,-6) circle (4pt);
	\fill (14,-5) circle (4pt);
	\fill (16,-3) circle (4pt);
	\fill (18,-5) circle (4pt);
	\fill (20,-7) circle (4pt);
	\fill (21,-6) circle (4pt);
	\fill (22,-7) circle (4pt);
\end{tikzpicture}
\end{center}
\caption{An example of the modification of a boxed Catalan path}
\label{fig:modification}
\end{figure}
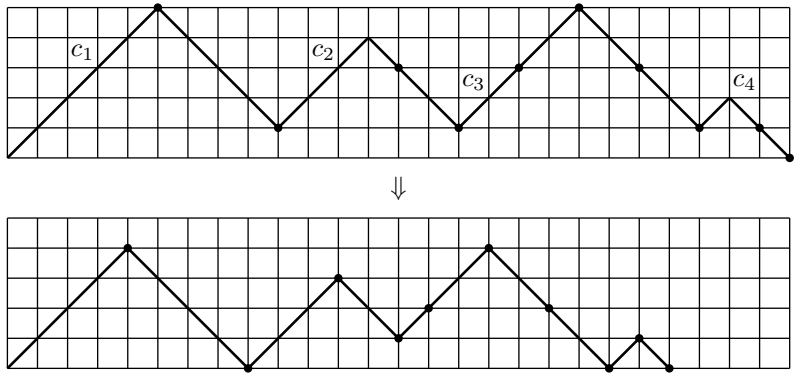

\begin{proof}[Proof of Theorem \ref{thm:effectiveconegeneral}]
We may assume that $\cM(\vec{a}) = \cM^{+}$, that is, a blow-up of $(\PP^{1})^{n}\git_{L}\SL_{2}$ at $[\vec{p}]$ where $L = \cO(a_{1}, \cdots, a_{n})$.

Take an effective divisor $D \in |\overline{\cO}(k_{1}, \cdots, k_{n}) - tE|$. Since $\cM^{+} \to (\PP^{1})^{n}\git_{L}\SL_{2}$ is a blow-up at a point and $E$ is the exceptional divisor, the intersection of $\mathrm{Eff}(\cM^{+})$ with the half space $t \le 0$ is generated by the extremal rays of $(\PP^{1})^{n}\git_{L}\SL_{2}$ and $E$. 

Suppose that $t \ge 0$. Then $D$ is the zero set of a section $s \in \mathbb{V}_{N-t}(k_{1}, \cdots, k_{n})$ where $2N = \sum k_{i}$. By rearranging the indices, we may assume that $k_{1} \ge \cdots \ge k_{n}$. Also by the propagation of vacua, we may assume that $k_{n} > 0$. Since $\mathbb{V}_{N-t}(k_{1}, \cdots, k_{n}) \ne 0$, by Proposition \ref{prop:CBandboxCatpath}, there is a double sequence $DS$ of level $N-t$ and shape $\mathbf{k}$. 

By Lemma \ref{lem:modifyingdoubleseq}, we can construct a set $T \subset [n]$ and a double sequence $DS'$ of level $N-t-1$ and shape $\mathbf{k}'$ (see Lemma \ref{lem:modifyingdoubleseq} for notations). By Proposition \ref{prop:CBandboxCatpath} again, $\rank\mathbb{V}_{N-t-1}(k_{1}', \cdots, k_{n}') > 0$. Furthermore, if we set
\[
	b_{i} = \begin{cases} 1, & i \in T,\\ 0, & i \notin T,\end{cases}
\]
then $\mathbb{V}_{1}(b_{1}, \cdots, b_{n}) \ne 0$ by the factorization rule and we have a morphism
\[
	\mathbb{V}_{1}(b_{1}, \cdots, b_{n}) \otimes 
	\mathbb{V}_{N-t-1}(k_{1}', \cdots, k_{n}')
	\hookrightarrow \mathbb{V}_{N-t}(k_{1}, \cdots, k_{n}), 
\]
which is given by the multiplication of sections (see Section \ref{ssec:quickdef}).
If we set $|\mathbf{k'}| = \sum k_{i}'$, $N' = |\mathbf{k'}|/2$, then $N - t - 1 = N' - (t+1 - |T|/2)$. Therefore the divisor $D$ is numerically equivalent to the sum of a divisor corresponding to a level one conformal block and an effective divisor in $|\overline{\cO}(k_{1}', \cdots, k_{n}') - t'E|$ where $t' := t+1-|T|/2 \le t$. 

By induction on $|\mathbf{k}|$, we can see that $D$ is numerically equivalent to an effective sum of level one conformal blocks, $E$ and a divisor corresponding to $\mathbb{V}_{r}(c_{1}, \cdots, c_{n})$ where $c_{i}$ is either 0 or 1 and $r \ge 1$. The very last divisor is an effective sum of a level one conformal block and the divisor $E$. In summary, $D$ is in the cone generated by level one conformal blocks and $E$. 

It remains to show that all of the generators are indeed extremal rays. It is shown in Proposition \ref{prop:extremal} below.
\end{proof}

\begin{proposition}\label{prop:extremal}
For $0 \le i \le \lfloor n/2 \rfloor$ and $I \subset [n]$ with $|I| = 2i$, a divisor in $|\overline{\cO}(\sum_{j \in I}e_{j}) - (i-1)E|$ is an extremal ray of $\mathrm{Eff}(\cM(\vec{a}))$. 
\end{proposition}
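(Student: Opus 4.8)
The plan is to reduce the extremality assertion to an elementary separation statement in convex geometry. By the portion of Theorem \ref{thm:effectiveconegeneral} already established, $\mathrm{Eff}(\cM(\vec{a})) = \mathrm{Eff}(\cM^{+})$ is the polyhedral cone generated by the finitely many classes $G_{I} := \overline{\cO}(\sum_{j \in I}e_{j}) - (i-1)E$, one for each even subset $I \subseteq [n]$ with $|I| = 2i$; here $G_{\emptyset} = E$. These span pairwise distinct rays, since distinct $I$ give non-proportional classes (already the $\overline{\cO}(\sum_{j\in I}e_j)$-part determines $I$). For a polyhedral cone described by a finite generating set, a generator $G_{I}$ spans an extremal ray as soon as there is a linear functional $\Phi$ on $\Pic(\cM^{+})_{\QQ}$ with $\Phi(G_{I}) = 0$ and $\Phi(G_{J}) > 0$ for every other generator $G_{J}$: if $G_{I} = u + v$ with $u, v$ in the cone, then $0 = \Phi(G_{I}) = \Phi(u) + \Phi(v)$ forces $\Phi(u) = \Phi(v) = 0$, and strict positivity of $\Phi$ on all generators except $G_{I}$ makes both $u$ and $v$ nonnegative multiples of $G_{I}$. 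So it suffices to produce such a functional $\Phi_{I}$ for each even $I$.

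Since $L$ has a maximal stable locus, $\{\overline{\cO}(e_{1}), \cdots, \overline{\cO}(e_{n}), E\}$ is a $\QQ$-basis of the rank $n+1$ group $\Pic(\cM^{+})_{\QQ}$, so a functional is determined by its values on these classes. I would set, with $|I| = 2i$,
\[
	\Phi_{I}(\overline{\cO}(e_{j})) = \begin{cases} i-1, & j \in I,\\ i+1, & j \notin I,\end{cases}
	\qquad \Phi_{I}(E) = 2i.
\]
The one computation to carry out is the evaluation on an arbitrary generator $G_{J}$, writing $|J| = 2i_{J}$ so that $G_{J} = \overline{\cO}(\sum_{j\in J}e_{j}) - (i_{J}-1)E$. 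Setting $a = |I\cap J|$ and $b = |J\setminus I|$ (so $a+b = 2i_{J}$), a direct substitution collapses to the clean formula
\[
	\Phi_{I}(G_{J}) = a(i-1) + b(i+1) - (i_{J}-1)\cdot 2i = b - a + 2i = |J\setminus I| + |I\setminus J| = |I\triangle J|.
\]
Hence $\Phi_{I}(G_{J}) = |I\triangle J| \ge 0$, with equality precisely when $J = I$. In particular $\Phi_{I}(G_{I}) = 0$ and $\Phi_{I}(G_{J}) > 0$ for all even $J \ne I$, which is exactly the separating functional required, so by the previous paragraph the class of a divisor in $|\overline{\cO}(\sum_{j\in I}e_{j}) - (i-1)E|$ spans an extremal ray.

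Conceptually, after the substitution $w_{j} = \Phi_{I}(\overline{\cO}(e_{j})) - i$ the condition on $\Phi_{I}$ becomes the requirement that $I$ be the unique minimizer of $J \mapsto \sum_{j\in J}w_{j}$ among even subsets; the choice $w_{j} = -1$ for $j \in I$ and $w_{j} = +1$ for $j \notin I$ achieves this because $\sum_{j\in J}w_{j} - \sum_{j\in I}w_{j} = |I\triangle J| > 0$ for $J \ne I$. The only genuine content of the argument is this symmetric-difference identity, and that is where I expect the verification to concentrate; everything else is formal, and no positivity of $\Phi_{I}$ as a curve class is needed. The argument is uniform in $I$, and in particular it handles the boundary cases $I = \emptyset$ (giving $G_{\emptyset} = E$, so the exceptional divisor is extremal) and, when $n$ is even, $I = [n]$, with no parity or special cases requiring separate treatment.
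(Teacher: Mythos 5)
Your proof is correct, and it takes a genuinely different route from the paper's, though both certify extremality via linear functionals. The paper, for each generator $G_{I}$ with $i \ge 1$, constructs $n$ linearly independent functionals $\ell_{1}, \cdots, \ell_{n}$ that vanish on $G_{I}$ and are merely \emph{nonnegative} on all generators; since $\Pic(\cM(\vec{a}))_{\QQ}$ has rank $n+1$, the common kernel of these functionals is one-dimensional, so the ray of $G_{I}$ is a face. That construction needs an auxiliary combinatorial lemma (Lemma \ref{lem:indepset}, producing subsets of size $2i-2$ whose indicator vectors form a basis), a case split between $i \ge 2$ and $i = 1$, and a separate geometric argument for $I = \emptyset$ (namely that $E$ is the exceptional divisor of a blow-up). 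You instead produce a \emph{single strictly separating} functional $\Phi_{I}$ per generator, and the identity $\Phi_{I}(G_{J}) = |I \triangle J|$ does all the work: your computation is right, since with $a = |I\cap J|$, $b = |J\setminus I|$ one has $a(i-1)+b(i+1)-2i(i_{J}-1) = b-a+2i = |J\setminus I|+|I\setminus J|$, which vanishes exactly when $J = I$. Your reduction of extremality to strict separation is also sound, and it correctly relies on the cone being generated by the $G_{J}$'s — which is exactly what is established in the proof of Theorem \ref{thm:effectiveconegeneral} before Proposition \ref{prop:extremal} is invoked, so there is no circularity (the paper's certificate has the same logical dependence, since its condition (2) is quantified over the generating set $S$). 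What your approach buys is uniformity: no auxiliary lemma, no case analysis in $i$, and the boundary case $I = \emptyset$ (extremality of $E$) falls out of the same formula rather than requiring a separate geometric argument. The only thing the paper's certificate asks for that yours does not is weaker per-functional positivity (nonnegativity rather than strict positivity at the other generators), at the price of needing $n$ independent functionals; in the present situation, where the generating set is explicitly known, this is no advantage, and your argument is the cleaner of the two.
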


\begin{lemma}\label{lem:indepset}
For each $n \ge 3$, there are $n$ subsets $J_{1}, \cdots, J_{n} \subset [n]$ such that 
\begin{enumerate}
\item $|J_{k}| = n-2$;
\item $\{\sum_{j \in J_{k}}e_{j}\}$ form a basis of $\QQ^{n}$.
\end{enumerate}
\end{lemma}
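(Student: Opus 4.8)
The plan is to translate the statement into graph theory. Since $|J_k| = n-2$, each $J_k$ is the complement of a two-element set $P_k \subset [n]$, and
\[
	\sum_{j \in J_k} e_j = \mathbf{1} - \chi_{P_k},
\]
where $\mathbf{1} = \sum_{i=1}^{n} e_i$ and $\chi_{P_k}$ is the indicator vector of the pair $P_k$. Thus choosing $J_1, \dots, J_n$ is the same as choosing $n$ unordered pairs, i.e. the edge set of a simple graph $G$ on the vertex set $[n]$ with exactly $n$ edges; I will pick $G$ carefully and show the resulting vectors $v_k := \sum_{j \in J_k} e_j$ form a basis.

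Next I would characterize dependence. Suppose $\sum_k c_k v_k = 0$. Writing $s = \sum_k c_k$ and letting $d_i = \sum_{k : i \in P_k} c_k$ be the $c$-weighted degree of vertex $i$, the $i$-th coordinate of the relation reads $s - d_i = 0$, so every weighted degree equals $s$. Summing over $i$ and using $\sum_i d_i = 2\sum_k c_k = 2s$ (each edge has two endpoints) gives $ns = 2s$, hence $s = 0$ because $n \ge 3$. Therefore a nontrivial dependence is exactly a nonzero edge-weighting of $G$ with all weighted vertex-degrees zero; conceptually this is a nonzero vector in the kernel of the unsigned incidence matrix of $G$, which for a connected unicyclic graph (one with $n$ vertices and $n$ edges) is trivial precisely when its unique cycle is odd. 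So it suffices to exhibit such a $G$ whose cycle is odd.

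The explicit choice is a triangle with a path attached: take $P_1 = \{1,2\}$, $P_2 = \{2,3\}$, $P_3 = \{1,3\}$, and $P_k = \{k-1, k\}$ for $4 \le k \le n$. This uses $3 + (n-3) = n$ distinct edges on $n$ vertices, is connected, and has first Betti number $1$ with the odd triangle $\{1,2,3\}$ as its only cycle. To finish rigorously without quoting the incidence-matrix fact, I would solve $d_i = 0$ directly: the leaf relation $c_n = 0$ together with $c_j + c_{j+1} = 0$ for $4 \le j \le n-1$ forces $c_4 = \dots = c_n = 0$ from the outside in, after which the triangle relations $c_1 + c_2 = c_2 + c_3 = c_1 + c_3 = 0$ yield $2c_2 = 0$, so $c_1 = c_2 = c_3 = 0$. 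Thus $c = 0$ and the $v_k$ are independent.

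The only real obstacle is a parity trap: the most natural guess $J_k = [n] \setminus \{k, k+1\}$ (indices mod $n$) fails exactly when $n$ is even, because then $G = C_n$ is bipartite and the alternating weighting gives $v_1 - v_2 + v_3 - \cdots - v_n = 0$. Recognizing that one must force an odd cycle is the crux; once that is seen, inserting a triangle makes the construction uniform in $n$ and the remaining verification routine.
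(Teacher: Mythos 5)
Your proof is correct, and it takes a genuinely different route from the paper. The paper also makes an explicit choice, but a different one and with a different verification: it takes $J_{k} = [n-1] \setminus \{k\}$ for $1 \le k \le n-1$ (in your graph language, the star of pairs $\{k,n\}$ centered at $n$), observes that the corresponding $(n-1)\times(n-1)$ matrix is $J - I$ with $J$ the all-ones matrix, and computes $\det(J-I) = P(1) \ne 0$ from the characteristic polynomial of $J$; it then adjoins one last set $J_{n} = J' \cup \{n\}$ with $J' \subset [n-1]$, whose vector is independent of the rest because it alone has a nonzero $n$-th coordinate. So the paper's graph is a star plus one edge between two leaves (again unicyclic with a triangle), and independence is a one-line eigenvalue/determinant evaluation plus a triangular-structure observation. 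Your approach instead isolates the general mechanism: a dependence $\sum c_k v_k = 0$ forces $\sum_k c_k = 0$ and then is exactly a nonzero edge-weighting with all weighted degrees zero, so the question becomes triviality of the kernel of the unsigned incidence matrix, which for a connected unicyclic graph is governed by the parity of its unique cycle. What the paper's argument buys is brevity; what yours buys is structural insight — it explains why the most symmetric candidate ($J_k = [n]\setminus\{k,k+1\}$, the cycle $C_n$) fails for even $n$, shows that \emph{any} connected unicyclic graph with an odd cycle works, and your direct peeling of the path followed by the triangle relations makes the proof self-contained without quoting the incidence-matrix rank fact. Both proofs, at bottom, exhibit a non-bipartite unicyclic graph on $[n]$; yours makes that the organizing principle rather than an accident of the construction.
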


\begin{proof}
For $1 \le k \le n-1$, let $J_{k} = [n-1] - \{k\}$. Let $A$ be an $(n-1) \times (n-1)$ matrix whose $k$-th row is $\sum_{j \in J_{k}}e_{j}$. If we denote the square matrix whose all entries are one by $J$ and if $I$ is the identity matrix then $A = J - I$. It is straightforward to check that the characteristic polynomial of $J$ is $P(t) = (-1)^{n-1}t^{n-2}(t-n+1)$. Now $\det A = \det (J - I) = P(1) \ne 0$, thus $\sum_{j \in J_{1}}e_{j}, \cdots, \sum_{j \in J_{n-1}}e_{j}$ are linearly independent. Finally, take any $J' \subset [n-1]$ where $|J'| = n-3$ and define $J_{n} = J'\cup \{n\}$. Then $\sum_{j \in J_{1}}e_{j}, \cdots, \sum_{j \in J_{n}}e_{j}$ are linearly independent. 
\end{proof}

\begin{proof}[Proof of Proposition \ref{prop:extremal}]
Let 
\[
	S = \{\overline{\cO}(\sum_{j \in I}e_{j}) - (i-1)E\;|\; 
	0 \le i \le \lfloor n/2\rfloor, I \subset [n], |I| = 2i\},
\]
the set of generators of $\mathrm{Eff}(\cM(\vec{a}))$. For $i = 0$, $E$ is the exceptional divisor of a blow-up, so it is extremal. For each $i \ge 1$ and $I \subset [n]$, we will construct $n$ linearly independent functionals $\ell_{1}, \cdots, \ell_{n} \in \Pic(\cM(\vec{a}))_{\QQ}^{*}$ such that
\begin{enumerate}
\item $\ell_{k}(\overline{\cO}(\sum_{j \in I}e_{j}) - (i-1)E) = 0$;
\item $\ell_{k}(D) \ge 0$ for all $D \in S$;
\end{enumerate}
for $1 \le k \le n$. Since $\Pic(\cM(\vec{a}))_{\QQ}$ has rank $n+1$, we can conclude that all elements of $S$ are extremal rays of $\Pic(\cM(\vec{a}))_{\QQ}$. By symmetry, it is enough to show for $I = \{1, 2, \cdots, 2i\}$, i.e., $\overline{\cO}(\sum_{j=1}^{2i}e_{j}) - (i-1)E$. When  $i \ge 2$, let $J_{1}, \cdots, J_{2i}$ be $2i$ subsets of $[2i]$ constructed in Lemma \ref{lem:indepset}. Define $\ell_{k}$ as:
\begin{enumerate}
\item $\ell_{k}(\overline{\cO}(\sum a_{j}e_{j})-tE) = \sum_{j \in J_{k}}a_{j} + \sum_{j > 2i}a_{j} - 2t$ for $1 \le k \le 2i$;
\item $\ell_{k}(\overline{\cO}(\sum a_{j}e_{j})-tE) = a_{k}$ for $2i < k \le n$.
\end{enumerate}
If $i = 1$ (thus $I = \{1, 2\}$), define $\ell_{k}$ as:
\begin{enumerate}
\item $\ell_{k}(\overline{\cO}(\sum a_{j}e_{j})-tE) = \sum_{j = 1}^{n}a_{j} - a_{k} - t$ for $k = 1, 2$;
\item $\ell_{k}(\overline{\cO}(\sum a_{j}e_{j})-tE) = a_{k}$ for $2 < k \le n$.
\end{enumerate}

It is a routine computation to check that those linear functionals are linearly independent, 
\[
	\ell_{k}(\overline{\cO}(\sum_{j=1}^{2i}e_{j}) - (i-1)E) = 0,
\]
and $\ell_{k}(D) \ge 0$ for every $D \in S$.
\end{proof}

\begin{remark}
If $\cM(\vec{a})$ is general (i.e. stability coincides with semistability) but does not have the maximal Picard number, then it is a rational contraction (a composition of finitely many flips and divisorial contractions) of $\cM(\vec{a}')$ with Picard number $n+1$. If we denote the rational contraction by $\phi : \cM(\vec{a}') \dashrightarrow \cM(\vec{a})$, then there is a well-defined push-forward 
\[
	\phi_{*} : \mathrm{Pic}(\cM(\vec{a}'))_{\QQ} \to 
	\mathrm{Pic}(\cM(\vec{a}))_{\QQ}
\]
and $\mathrm{Eff}(\cM(\vec{a})) = \im \; \phi_{*}(\mathrm{Eff}(\cM(\vec{a}')))$ since all divisors on $\cM(\vec{a})$ are Cartier. So $\mathrm{Eff}(\cM(\vec{a}))$ is generated by $\{\phi_{*}(\overline{\cO}(\sum_{j \in I}e_{j}) - (i-1)E)\}$. Therefore essentially Theorem \ref{thm:effectiveconegeneral} gives $\mathrm{Eff}(\cM(\vec{a}))$ for a general parabolic weight $\vec{a}$. 
\end{remark}

\section{Theta divisors and birational models}\label{sec:thetadivisor}

Theorem \ref{thm:effectiveconegeneral} tells us that any effective divisor on $\cM(\vec{a})$ can be described as a nonnegative linear combination of conformal blocks and the exceptional divisor $E$. This result has an interesting consequence (Theorem \ref{thm:Moriprogram}). 

\begin{lemma}\label{lem:MDS}
For a general parabolic weight $\vec{a} \in W^0$, $\cM(\vec{a})$ is a Mori dream space.
\end{lemma}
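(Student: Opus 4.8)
The plan is to verify the hypotheses of the Hu--Keel characterization of Mori dream spaces: a normal $\QQ$-factorial projective variety $X$ with $\mathrm{Pic}(X)_{\QQ} = N^1(X)_{\QQ}$ is a Mori dream space exactly when its effective cone admits a finite decomposition into rational polyhedral chambers, each being the pullback of the nef cone of a $\QQ$-factorial projective birational model on which every nef class is semiample (equivalently, when the Cox ring is finitely generated). First I would record the easy structural inputs. By Corollary \ref{smoothness of moduli} the space $\cM(\vec{a})$ is smooth, hence $\QQ$-factorial; it is rational (Corollary following Proposition \ref{prop:parabolicGIT}), so $H^1(\cM(\vec{a}), \cO) = 0$ and numerical and linear equivalence of divisors coincide; and by Lemma \ref{lem:generaleffectivecone} its Picard group is finitely generated. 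By Theorem \ref{thm:effectiveconegeneral} the cone $\mathrm{Eff}(\cM(\vec{a}))$ is rational polyhedral with explicit extremal generators. Since a rational contraction of a Mori dream space is again a Mori dream space, it suffices to treat the maximal Picard number case, that is, $\cM(\vec{a}) = \cM^+$; the remaining task is then to produce the finite chamber structure.

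For this I would exploit that $\cM(\vec{a})$ arises through variation of GIT from the toric variety $(\PP^1)^n$. By Proposition \ref{prop:parabolicGIT} and Proposition \ref{prop:generalmoduli}, as the weight varies the spaces $\cM(\vec{b})$ are the quotients $(\PP^1)^n \git_{L} \SL_2$ and their wall-crossing modifications; by Lemma \ref{lem:stabilitywall} the domain $W$ is cut by finitely many walls $\Delta_{I,m}$ into finitely many chambers, so there are only finitely many such models, all $\QQ$-factorial and projective, and by Proposition \ref{prop:wallcrossing} consecutive ones differ by a flip or a divisorial contraction. The Dolgachev--Hu and Thaddeus theory of variation of GIT then supplies the required wall-and-chamber decomposition: the ample classes of the models tile the movable subcone of $\mathrm{Eff}(\cM(\vec{a}))$ by rational polyhedral chambers, and Pauly's identification of the natural ample divisor on $\cM(\vec{b})$ with a conformal block (the input used later for Theorem \ref{thm:Moriprogram}) realizes these chambers as the pullbacks of nef cones. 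Together with the rigid generators $E$ and the boundary classes this should exhaust $\mathrm{Eff}(\cM(\vec{a}))$.

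The route I would try to make fully self-contained instead establishes finite generation of the Cox ring directly. By Lemma \ref{lem:effectivedivisorasCB} the graded pieces of $\mathrm{Cox}(\cM^+)$ are the conformal block spaces $\mathbb{V}_{N-t}(b_1, \dots, b_n)$, and the multiplication is the conformal-block product of Section \ref{ssec:quickdef}; thus $\mathrm{Cox}(\cM^+)$ is the algebra of $\mathfrak{sl}_2$ conformal blocks on $(\PP^1, \vec{p})$, whose finite generation is Manon's theorem (\cite{Man09b}). The combinatorial engine behind Theorem \ref{thm:effectiveconegeneral}, namely the modification of double sequences in Lemma \ref{lem:modifyingdoubleseq}, already shows that the monoid of effective classes is generated by the level-one classes together with $E$; the content of the present lemma is to upgrade this from classes to the algebra itself.

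The hardest point is precisely this finite generation of the Cox ring. Theorem \ref{thm:effectiveconegeneral} only controls the effective cone at the level of divisor classes, and since the higher-level spaces $\mathbb{V}_{N-t}(\mathbf{k})$ can have dimension greater than one, products of level-one sections need not span them; indeed Manon's generators occur in levels one and two, so no naive multiplicative generation by the $2^{n-1}$ extremal sections can succeed. I would therefore either appeal directly to Manon's result, or carry out the variation-of-GIT chamber decomposition carefully, checking that the finitely many wall-crossing models of the previous paragraph account for every facet of the polyhedral cone computed in Theorem \ref{thm:effectiveconegeneral}.
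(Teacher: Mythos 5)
Your proposal takes a genuinely different route from the paper, and it is worth separating your two suggested paths. The paper's own proof uses neither the Hu--Keel chamber criterion nor the Cox ring: it quotes Abe's theorem \cite{Abe04} that $\cM(\vec{b})$ is Fano for the half weight $\vec{b}=(1/2,\cdots,1/2)$, notes that this weight is general when $n$ is odd (and, when $n$ is even, perturbs to a nearby general weight $\vec{b}^{\epsilon}$ for which $\cM(\vec{b}^{\epsilon})$ has maximal Picard number and is log Fano), invokes \cite[Corollary 1.3.2]{BCHM10} to conclude that this single model is a Mori dream space, and then propagates the property: $\cM(\vec{b}^{\epsilon})$ is connected to any maximal-Picard-number $\cM(\vec{a})$ by finitely many flips, and the remaining general models are contractions of these. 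In particular the paper's proof needs no effective-cone information at all. Your second route --- identifying $\mathrm{Cox}(\cM^{+})$ with the algebra of $\mathfrak{sl}_{2}$-conformal blocks and citing Manon's finite generation \cite{Man09b} --- is a legitimate alternative, one the paper itself gestures at in the introduction; it buys more (explicit generators in levels one and two) than the BCHM argument does. But to count as a proof here it still needs the identification made precise: Lemma \ref{lem:effectivedivisorasCB} only handles the graded pieces $\overline{\cO}(b_{1},\cdots,b_{n})-tE$ with $t\ge 0$, so you must adjoin the canonical section of $E$ to account for $t<0$, check compatibility of multiplications, and bridge the gap between Manon's statement for the moduli \emph{stack} and the coarse space $\cM^{+}$.

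Your first route, which you present as the self-contained one, has a genuine gap exactly where you hedge. Hu--Keel requires that (i) on $\cM(\vec{a})$ and on each of the finitely many small modifications the nef cone be the affine hull of finitely many \emph{semiample} classes, and (ii) the pullbacks of these nef cones cover the entire movable cone. The VGIT wall-crossing structure (Proposition \ref{prop:wallcrossing}) and Pauly's ampleness of theta divisors \cite{Pau96} give you finitely many models and ample classes inside chambers, but they give no control over semiampleness of nef classes on the chamber walls, and the assertion that the chambers ``together with the rigid generators $E$ and the boundary classes should exhaust $\mathrm{Eff}$'' is precisely the statement requiring proof, not a consequence of anything you have assembled. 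Carrying this out honestly amounts to proving Theorem \ref{thm:Moriprogram} and its boundary analysis first, whereas the paper derives that theorem with the help of the present lemma; so as a primary strategy this route is either incomplete or risks running the logic backwards. Some input beyond polyhedrality of $\mathrm{Eff}(\cM(\vec{a}))$ --- in the paper's case, (log) Fano-ness plus \cite{BCHM10}; in your fallback, Manon's theorem --- is indispensable, since a polyhedral effective cone alone never implies finite generation of the Cox ring.
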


\begin{proof}
Abe showed that when $\vec{b} = (1/2, \cdots, 1/2)$, $\cM(\vec{b})$ is a Fano variety (\cite[Proposition 2.7]{Abe04}). It is straightforward to see that $\vec{b}$ is on a stability wall only if $n$ is even. Thus by \cite[Corollary 1.3.2]{BCHM10}, $\cM(\vec{b})$ is a Mori dream space if $n$ is odd. Set $\cM(\vec{b}^{\epsilon}) := \cM(\vec{b})$. 

When $n$ is even, $\vec{b}$ lies on a stability wall so in this case the Picard number of $\cM(\vec{b})$ is not maximal. But if we perturb the parabolic weight slightly, then the anticanonical divisor is on the boundary of the nef cone and if we subtract a boundary divisor with small coefficient, then it becomes ample. Thus for the perturbed parabolic weight $\vec{b}^{\epsilon}$, $\cM(\vec{b}^{\epsilon})$ has the maximal Picard number and it is log Fano. By \cite[Corollary 1.3.2]{BCHM10} again, $\cM(\vec{b}^{\epsilon})$ is a Mori dream space, too. 

Therefore in any case, $\cM(\vec{b}^{\epsilon})$ is a Mori dream space and has the maximal Picard number. Because $\cM(\vec{b}^{\epsilon})$ and $\cM(\vec{a})$ are connected by finitely many flips, if one is a Mori dream space then so is the other. 

Finally, for a general parabolic weight $\vec{a}$, the space $\cM(\vec{a})$ is a smooth contraction of certain $\cM(\vec{a}')$ with the maximal Picard number. Thus it is a Mori dream space, too.
\end{proof}

By above lemma and \cite[Proposition 1.11]{HK00}, we know that for any effective divisor we can construct a projective model
\[
	\cM(\vec{a})(D) := \proj \bigoplus_{m \ge 0}
	\mathrm{H}^{0}(\cM(\vec{a}),\lfloor\cO(mD)\rfloor)
\]
and there are only finitely many of them. 

In \cite{Pau96}, Pauly described a generalization of the theta divisor on the Jacobian of a curve, to the moduli space of parabolic vector bundles. 

\begin{definition}[\protect{\cite[Theorem 3.3]{Pau96}}]
In $\Pic(\cM(\vec{a}))_{\QQ}$, the \textbf{theta divisor} $\Theta_{\vec{a}}$ is a divisor such that for any family $(\cE, \{\cV_{i}\})$ over $\pi : S \to \cM(\vec{a})$, 
\[
	\pi^{*}(\Theta_{\vec{a}}) = (\det R\pi_{!}\cE)^{-k} \otimes 
	(\bigotimes_{i=1}^{n}\det \cQ_{i})^{ka_{i}} 
	\otimes (\det \cE|_{S \times \{y\}})^{e}
\]
where $k$ is the smallest positive integer such that $ka_{i}$ are all integers, $y$ is a point of $\PP^{1}$ and $e$ is determined by $e = k(1-(\sum a_{i})/2)$.
\end{definition}

Pauly showed that $\Theta_{\vec{a}}$ is ample (\cite[Theorem 3.3]{Pau96}) and 
\[
	\mathrm{H}^{0}(\cM(\vec{a}), \Theta_{\vec{a}}) \cong 
	\mathbb{V}_{k}(ka_{1}, \cdots, ka_{n})
\]
(\cite[Corollary 6.7]{Pau96}) when $0 < a_{i} < 1$, or equivalently, $0 < ka_{i} < k$ for every $1 \le i \le n$.

Now we can prove the second main theorem of this paper. 

\begin{theorem}\label{thm:Moriprogram}
For any $\QQ$-divisor $D \in \mathrm{int}\;\mathrm{Eff}(\cM(\vec{a}))$, the birational model $\cM(\vec{a})(D)$ is isomorphic to $\cM(\vec{b})$ for some parabolic weight $\vec{b}$. 
\end{theorem}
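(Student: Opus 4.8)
The plan is to realize the entire interior of $\mathrm{Eff}(\cM(\vec{a}))$ as a one-parameter family of (pullbacks of) Pauly's theta divisors and to compute the associated section rings directly as rings of $\mathfrak{sl}_2$-conformal blocks, so that each projective model is visibly $\cM(\vec{b})$ for a suitable weight $\vec{b}$. By the Remark following Theorem \ref{thm:effectiveconegeneral} it suffices to treat the case where $\cM(\vec{a})$ has the maximal Picard number $n+1$: for any other general weight $\cM(\vec{a})$ is a rational contraction of such a maximal model, the effective cones correspond under the induced push-forward, and the projective models agree. So I fix a maximal model and write a $\QQ$-class as $\overline{\cO}(\sum_j c_j e_j) - sE$.

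First I would observe that every generator of $\mathrm{Eff}(\cM(\vec{a}))$ produced in Theorem \ref{thm:effectiveconegeneral}, namely $\overline{\cO}(\sum_{j\in I}e_j) - (i-1)E$ with $|I| = 2i$ (including $E$ itself in the case $i=0$), satisfies the single affine relation $\sum_j c_j - 2s = 2$. Hence $\mathrm{Eff}(\cM(\vec{a}))$ is the cone over the polytope $P := \mathrm{Eff}(\cM(\vec{a})) \cap H$ lying in the hyperplane $H = \{\sum_j c_j - 2s = 2\}$, and identifying $H$ with $\QQ^n$ through $(c_1, \dots, c_n)$ exhibits $P$ as the convex hull of the $0/1$-vectors of even weight. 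Since the functional $\sum_j c_j - 2s$ takes the value $2$ on every generator, it is strictly positive on every nonzero effective class, so each ray in $\mathrm{int}\,\mathrm{Eff}(\cM(\vec{a}))$ meets $H$ in a unique interior point of $P$. Thus any $D \in \mathrm{int}\,\mathrm{Eff}(\cM(\vec{a}))$ is proportional to
\[
\theta_{\vec{b}} := \overline{\cO}\Big(\sum_i b_i e_i\Big) - \tfrac{1}{2}\Big(\sum_i b_i - 2\Big)E, \qquad \vec{b} \in \mathrm{int}\,P.
\]
A short check that for $n \ge 5$ the inequalities $c_i > 0$, $c_i < 1$ and $2c_i < \sum_j c_j$ cut out facets of $P$ then shows that $\vec{b} \in W$ and is effective by Corollary \ref{cor:stability}, so that $\cM(\vec{b})$ is defined.

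The heart of the argument is to identify the section ring of $\theta_{\vec{b}}$ with a ring of conformal blocks. Let $k$ be the smallest positive integer clearing the denominators of the $b_i$ and of $\tfrac12(\sum_i b_i - 2)$, so that $k\theta_{\vec{b}}$ is integral. Applying Lemma \ref{lem:effectivedivisorasCB} to $mk\theta_{\vec{b}} = \overline{\cO}(mkb_1, \dots, mkb_n) - mk\cdot\tfrac12(\sum_i b_i - 2)E$ gives
\[
\mathrm{H}^{0}(\cM(\vec{a}), mk\theta_{\vec{b}}) \cong \mathbb{V}_{mk}(mkb_1, \dots, mkb_n)
\]
for all $m \ge 0$; by Pauly's theorem the same space equals $\mathrm{H}^{0}(\cM(\vec{b}), m\Theta_{\vec{b}})$, where $\Theta_{\vec{b}}$ is ample on $\cM(\vec{b})$. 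Matching the two computations identifies the Veronese subring of $R(\cM(\vec{a}), \theta_{\vec{b}})$ with the section ring of the ample divisor $\Theta_{\vec{b}}$ on $\cM(\vec{b})$. Since $\cM(\vec{a})$ is a Mori dream space (Lemma \ref{lem:MDS}), the projective model of a $\QQ$-class depends only on its ray, so
\[
\cM(\vec{a})(D) = \cM(\vec{a})(\theta_{\vec{b}}) = \proj \bigoplus_{m \ge 0}\mathrm{H}^{0}(\cM(\vec{b}), m\Theta_{\vec{b}}) \cong \cM(\vec{b}),
\]
the last isomorphism holding because $\Theta_{\vec{b}}$ is ample on the normal projective variety $\cM(\vec{b})$.

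The main obstacle I anticipate is the bookkeeping at the two places where the picture depends on the sign of $\sum_i b_i - 2$. When this sum is less than $2$ the multiplicity $-mk\cdot\tfrac12(\sum_i b_i -2)$ along $E$ is negative, so Lemma \ref{lem:effectivedivisorasCB} does not apply verbatim and I must instead read $\theta_{\vec b}$ as a class pulled back from the quotient of Proposition \ref{prop:parabolicGIT}, where the identification of $\mathrm{H}^0$ with $\mathbb{V}_{mk}$ follows from Remark \ref{rem:basicpropertiesCB}(1). Secondly, I must check that the weights $\vec b \in \mathrm{int}\,P$ -- which need \emph{not} avoid the higher stability walls $\Delta_{S,m}$ with $|S| \ge 3$ -- are nevertheless genuine effective weights, so that $\cM(\vec b)$ exists and Pauly's formula still applies even when $\vec b$ lies on such a wall, in which case $\cM(\vec b)$ has non-maximal Picard number and the corresponding $\cM(\vec a)(D)$ is a further contraction rather than a small modification.
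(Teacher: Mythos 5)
Your proposal is correct, and its engine is the same as the paper's: identify sections of multiples of an interior class with $\mathfrak{sl}_2$-conformal blocks (Lemma \ref{lem:effectivedivisorasCB} when the coefficient of $E$ is nonnegative, and Proposition \ref{prop:parabolicGIT} together with Remark \ref{rem:basicpropertiesCB}(1) when it is negative), recognize that space as $\mathrm{H}^0$ of Pauly's ample theta divisor on $\cM(\vec b)$, and take $\proj$. Where you genuinely differ is in how interiority of $D$ is converted into the weight constraint: the paper notes that $mD-E$ is effective for $m\gg 0$, hence $\mathbb{V}_{m(N-t)-1}(mb_1,\cdots,mb_n)\ne 0$, and invokes Remark \ref{rem:basicpropertiesCB}(3) to get $b_i<N-t$; you instead observe that every generator in Theorem \ref{thm:effectiveconegeneral} lies on the affine hyperplane $\sum_j c_j - 2s = 2$, so $\mathrm{Eff}(\cM(\vec a))$ is the cone over the polytope $P$ of even-weight $0/1$-vectors and interior classes are exactly positive multiples of $\theta_{\vec b}$ with $\vec b$ in the relative interior of $P$. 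This convexity argument buys two things: the target weight is $\vec b$ itself, uniformly in both cases, rather than the paper's rescalings $\frac{1}{N-t}\vec b$ and $c\vec b$; and it makes explicit the inequality $2b_i<\sum_j b_j$ that effectiveness of the linearization in Proposition \ref{prop:parabolicGIT} requires, which the paper's $t\le 0$ case leaves implicit. Two small remarks. First, your facet list for $P$ is incomplete (the parity polytope also has facets coming from odd subsets of size $\ge 3$), but this is harmless: you only need the stated inequalities to be valid on $P$ and nonconstant, which forces strictness at interior points. Second, your anticipated second obstacle is not actually one: $\cM(\vec b)$ is defined for every $\vec b\in W$ by Theorem \ref{moduli par bdls} whether or not $\vec b$ is effective or general, and Pauly's ampleness and $\mathrm{H}^0$ computations are invoked under the sole hypothesis $0<b_i<1$; the paper's own weights $\frac{1}{N-t}\vec b$ may equally well lie on walls, so no additional check is needed beyond what you already established.
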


\begin{proof}
When $\mathrm{rank}\;\mathrm{Pic}(\cM(\vec{a}))_{\QQ}$ is not $n+1$, $\cM(\vec{a})$ is a rational contraction of $\cM(\vec{a}')$ with the maximal Picard number. Then $\mathrm{Eff}(\cM(\vec{a}))$ is embedded into $\mathrm{Eff}(\cM(\vec{a}'))$ naturally. So it suffices to show for $\cM(\vec{a})$ with the maximal Picard number. Write $D$ as $\overline{\cO}(b_{1}, \cdots, b_{n}) - tE$. 

First of all, suppose that $t > 0$. We may replace $D$ by its integral multiple and assume that $D$ is sufficiently divisible integral divisor. Then $|D| = \mathbb{V}_{N-t}(b_{1}, \cdots, b_{n})$ where $N = \sum b_{i}/2$. If $D$ is in the interior of the effective cone, $mD - E$ is effective for $m \gg 0$. This implies that $\mathbb{V}_{m(N-t)-1}(mb_{1}, \cdots , mb_{n}) \ne 0$, so $mb_{i} \le m(N-t)-1$ by Item (3) of Remark \ref{rem:basicpropertiesCB}. Therefore $b_{i} < N-t$. Then $D$ is a theta divisor on $\cM(1/(N-t)\vec{b})$. Since $D$ is ample on $\cM(1/(N-t)\vec{b})$, $\cM(\vec{a})(D) \cong \cM(1/(N-t)\vec{b})$. 

If $t \le 0$, define $D' := \overline{\cO}(b_{1}, \cdots, b_{n}) = D+tE$. Then $\cM(\vec{a})(D') \cong (\PP^{1})^{n}\git_{L}\SL_{2}$, where $L = \cO(b_{1}, \cdots, b_{n})$. Since $E$ is the exceptional divisor of the rational contraction $\cM(\vec{a}) \dashrightarrow (\PP^{1})^{n}\git_{L}\SL_{2}$, $\cM(\vec{a})(D) = \cM(\vec{a})(D')$. By Proposition \ref{prop:parabolicGIT}, $(\PP^{1})^{n}\git_{L}\SL_{2} \cong \cM(c\vec{b})$ for $0 < c < 2/(\sum b_{i})$. 
\end{proof}

\begin{remark}\label{rem:boundary}
The projective models for $D \in \partial \mathrm{Eff}(\cM(\vec{a}))$ are also described by moduli spaces of parabolic vector bundles. 

There are two different types of degenerations of a theta divisor $\mathbb{V}_{k}(k_{1}, \cdots, k_{n})$. One is the case that $k_{i} = 0$ for some $i$. The other one is that $k_{i} = k$ (Of course, both cases can arise together). If $k_{1}, \cdots, k_{r} > 0$ and $k_{r+1} = \cdots = k_{n} = 0$, then from the propogation of vacua, $\mathbb{V}_{k}(k_{1}, \cdots, k_{n}) \cong \mathbb{V}_{k}(k_{1}, \cdots, k_{r})$, thus the projective model is a moduli space of parabolic vector bundles with fewer parabolic points. 

The projective model corresponding to the second degeneration is described by Bertram in \cite[Section 3]{Ber94}. We may assume that $k_{1} = \cdots = k_{r} = k$ and $k_{i} < k$ for $i > r$. For a family of parabolic bundles $(\cE, \{\cV_{i}\}, \vec{a})$ of degree $d$ over $S$, we can construct a new family of parabolic bundles of degree $d-r$ and $(n-r)$ marked points as taking the kernel of 
\[
	\cE \to \bigoplus_{i=1}^{r}\cE|_{p_{i}}/\cV_{i},
\]
and taking $n-r$ subspaces $\cV_{i}$ for $r < i \le n$. 

Thus we have a rational map 
\[
	p : \cM(\vec{a}, d) \dashrightarrow \cM(\vec{a}', d-r),
\]
where $\vec{a}' = (a_{r+1}, \cdots, a_{n})$. In general, $p$ is not regular because it does not guarantee the stability of the induced family. But when $a_{i} \to 1$ for $1 \le i \le r$ (equivalently, $k_{i}$ is very close to $k$ for every $1 \le i \le r$), $p$ is a regular morphism. Bertram showed that the pull-back of the canonical polarization from GIT on $\cM(\vec{a}', -r)$ to $\cM(\vec{a})$ is precisely $\mathbb{V}_{k}(k, \cdots, k, k_{r+1}, \cdots, k_{n})$, where $\vec{a} = \frac{1}{k}(k_{1}, \cdots, k_{n})$ and $k_{i}$ is very close to $k$ for every $1 \le i \le r$. 
\end{remark}

\acknowledgement

We would like to thank Young-Hoon Kiem and David Swinarski, and anonymous referees for many invaluable suggestions for earlier drafts of this paper. The first author thanks Ana-Maria Castravet, Christopher Manon, and Swarnava Mukhopadhyay for indicating some references.


\begin{thebibliography}{BCHM10}

\bibitem{Abe04}
Takeshi Abe.
\newblock ``Anticanonical divisors of a moduli space of parabolic vector bundles
  of half weight on $P^1$."
\newblock {\em Asian Journal of Mathematics} 8, no. 3 (2004): 395--408.

\bibitem{ABCH13}
Daniele Arcara, Aaron Bertram, Izzet Coskun, and Jack Huizenga.
\newblock ``The minimal model program for the {H}ilbert scheme of points on
  {$\Bbb{P}^2$} and {B}ridgeland stability."
\newblock {\em Advances in Mathematics} 235 (2013): 580--626.

\bibitem{Bau91}
Stefan Bauer.
\newblock ``Parabolic bundles, elliptic surfaces and {${\rm
  SU}(2)$}-representation spaces of genus zero {F}uchsian groups."
\newblock {\em Mathematische Annalen} 290, no. 3 (1991): 509--526.

\bibitem{BM13}
Arend Bayer and Emanuele Macr{\`\i}.
\newblock ``MMP for moduli of sheaves on K3s via wall-crossing: nef and movable cones, Lagrangian fibrations."
\newblock {\em Inventiones Mathematicae}, 198, no. 3 (2014): 505--590.

\bibitem{Bea96a}
Arnaud Beauville.
\newblock ``Conformal blocks, fusion rules and the {V}erlinde formula."
\newblock {\em Proceedings of the {H}irzebruch 65 {C}onference on
  {A}lgebraic {G}eometry ({R}amat {G}an, 1993)}, 75--96, {\em Israel Math.
  Conf. Proc.}, 9,  Bar-Ilan Univ., Ramat Gan, 1996.

\bibitem{Ber94}
Aaron Bertram.
\newblock ``Stable pairs and stable parabolic pairs."
\newblock {\em Journal of Algebraic Geometry} 3, no. 4 (1994): 703--724.

\bibitem{BCHM10}
Caucher Birkar, Paolo Cascini, Christopher~D. Hacon, and James McKernan.
\newblock ``Existence of minimal models for varieties of log general type."
\newblock {\em Journal of the American Mathematical Society} 23, no. 2 (2010): 405--468.

\bibitem{BH95}
Hans U. Boden and Yi Hu.
\newblock ``Variations of moduli of parabolic bundles."
\newblock {\em Mathematische Annalen} 301, no. 3 (1995): 539--559.

\bibitem{CHS08}
Izzet Coskun, Joe Harris, and Jason Starr.
\newblock ``The effective cone of the {K}ontsevich moduli space."
\newblock {\em Canadian Mathematical Bulletin} 51, no. 4 (2008): 519--534.

\bibitem{DN89}
J.-M. Drezet and M. S. Narasimhan.
\newblock ``Groupe de {P}icard des vari{\'e}t{\'e}s de modules de fibr{\'e}s
  semi-stables sur les courbes alg{\'e}briques."
\newblock {\em Inventiones Mathematicae} 97, no. 1 (1989): 53--94.

\bibitem{FS13a}
Maksym Fedorchuk and David Ishii Smyth.
\newblock ``Alternate compactifications of moduli spaces of curves."
\newblock In {\em Handbook of moduli. {V}ol. {I}}, 331--413, {\em Adv. Lect.
  Math. (ALM)}, 24, Int. Press, Somerville, MA, 2013.

\bibitem{Fri98}
Robert Friedman.
\newblock {\em Algebraic surfaces and holomorphic vector bundles}.
\newblock Universitext. Springer-Verlag, New York, 1998.

\bibitem{Has03}
Brendan Hassett.
\newblock ``Moduli spaces of weighted pointed stable curves."
\newblock {\em Advances in Mathematics} 173, no. 2 (2003): 316--352.

\bibitem{HMSV09a}
Benjamin Howard, John Millson, Andrew Snowden, and Ravi Vakil.
\newblock ``The equations for the moduli space of {$n$} points on the line."
\newblock {\em Duke Mathematical Journal} 146, no. 2 (2009): 175--226.

\bibitem{HK00}
Yi Hu and Se{{\'a}}n Keel.
\newblock ``Mori dream spaces and {GIT}."
\newblock {\em Michigan Mathematical Journal} 48 (2000): 331--348.
\newblock Dedicated to William Fulton on the occasion of his 60th birthday.

\bibitem{HuLe10}
Daniel Huybrechts and Manfred Lehn.
\newblock {\em The geometry of moduli spaces of sheaves}.
\newblock Cambridge Mathematical Library. Cambridge University Press,
  Cambridge, second edition, 2010.

\bibitem{Kap93b}
M. Kapranov.
\newblock ``Chow quotients of {G}rassmannians. {I}."
\newblock In {\em I. {M}. {G}elfand {S}eminar}, 29--110, {\em Adv. Soviet
  Math.}, 16, Part 2, Amer. Math. Soc., Providence, RI, 1993.

\bibitem{KM13}
Se{{\'a}}n Keel and James McKernan.
\newblock ``Contractible extremal rays on {$\overline M_{0,n}$}."
\newblock In {\em Handbook of moduli. {V}ol. {II}}, 115--130, {\em Adv.
  Lect. Math. (ALM)}, 25, Int. Press, Somerville, MA, 2013.

\bibitem{Loo09}
E. Looijenga.
\newblock ``Unitarity of {${\rm SL}(2)$}-conformal blocks in genus zero."
\newblock {\em Journal of Geometry and Physics} 59, no. 5 (2009): 654--662.

\bibitem{Man09b}
Christopher~A. Manon.
\newblock ``The algebra of conformal blocks."
\newblock preprint arXiv:0910.0577, 2009.

\bibitem{MS80}
V. B. Mehta and C. S. Seshadri.
\newblock ``Moduli of vector bundles on curves with parabolic structures."
\newblock {\em Mathematische Annalen} 248, no. 3 (1980): 205--239.

\bibitem{Muk05}
Shigeru Mukai.
\newblock ``Finite generation of the Nagata invariant rings in A-D-E cases."
\newblock RIMS preprint 1502, 2005.

\bibitem{GIT94}
D. Mumford, J. Fogarty, and F. Kirwan.
\newblock {\em Geometric invariant theory}, volume~34 of {\em Ergebnisse der
  Mathematik und ihrer Grenzgebiete (2) [Results in Mathematics and Related
  Areas (2)]}.
\newblock Springer-Verlag, Berlin, third edition, 1994.

\bibitem{Pau96}
Christian Pauly.
\newblock ``Espaces de modules de fibr{\'e}s paraboliques et blocs conformes."
\newblock {\em Duke Mathematical Journal} 84, no. 1 (1996): 217--235.

\bibitem{Tha96}
Michael Thaddeus.
\newblock ``Geometric invariant theory and flips."
\newblock {\em Journal of the American Mathematical Society} 9, no. 3 (1996): 691--723.

\bibitem{Tha02}
Michael Thaddeus.
\newblock ``Variation of moduli of parabolic {H}iggs bundles."
\newblock {\em Journal f{\"u}r die Reine und Angewandte Mathematik} 547 (2002): 1--14.

\bibitem{Uen08}
Kenji Ueno.
\newblock {\em Conformal field theory with gauge symmetry}, {\em
  Fields Institute Monographs}, 24.
\newblock American Mathematical Society, Providence, RI, 2008.

\bibitem{Yok95}
K{\^o}ji Yokogawa.
\newblock ``Infinitesimal deformation of parabolic {H}iggs sheaves."
\newblock {\em International Journal of Mathematics} 6, no. 1 (1995): 125--148.

\end{thebibliography}
\end{document}